\documentclass{elsarticle} \journal{arXiv}
\usepackage[a4paper, margin=4cm]{geometry}
\usepackage{amsmath,amssymb,amsthm,color,enumerate}
\usepackage[colorlinks,citecolor=magenta,linkcolor=blue]{hyperref}

\setlength\parindent{0pt}

\theoremstyle{plain}
\newtheorem{theorem}{Theorem}[section]

\theoremstyle{definition}
\newtheorem{definition}[theorem]{Definition}
\newtheorem{proposition}[theorem]{Proposition}
\newtheorem{remark}[theorem]{Remark}
\newtheorem{corollary}[theorem]{Corollary}
\newtheorem{lemma}[theorem]{Lemma}
\newtheorem{example}[theorem]{Example}
\renewenvironment{proof}{{\noindent \bf  Proof.}}{\qed}

\usepackage{verbatim}

\newcommand{\Mr}{D^\psi_{-,\infty}}

\newcommand{\Cl}{\partial^\psi_{+}}
\newcommand{\Cr}{\partial^\psi_{-}}

\newcommand{\RLl}{D^\psi_{+}}

\newcommand{\Crh}{\partial^\psi_{-h}}

\newcommand{\sym}{\psi}
\newcommand{\Gru}{\mathcal G^{\sym}}

\newcommand{\LTp}{\beta}

\newcommand{\skorss}{E}

\newcommand{\Gen}{G}
\newcommand{\AF}{A}

\newcommand{\Yh}[1]{ Y^{{\rm #1},h}}
\newcommand{\Y}[1]{ Y^{{\rm #1}}}

\newcommand{\Nl}[1]{{\rm N^{\text{$l$}}_{\text{$#1$}}}}
\newcommand{\Nr}[1]{{\rm N^{\text{$r$}}_{\text{$#1$}}}}
\newcommand{\Nlone}{{\rm N^{\text{$l$}}}}
\newcommand{\Nrone}{{\rm N^{\text{$r$}}}}
\newcommand{\Dl}{{\rm D^{\text{$l$}} }}
\newcommand{\Dr}{{\rm D^{\text{$r$}} }}
\newcommand{\Dlh}{{\rm \overline{D}^{\text{$l$}}_{\text{$h$}} }}
\newcommand{\Drh}{{\rm \overline{D}^{\text{$r$}}_{\text{$h$}} }}
\newcommand{\Dlzero}{{\rm \overline{D}^{\text{$l$}}_{\text{$0$}} }}
\newcommand{\Drzero}{{\rm \overline{D}^{\text{$r$}}_{\text{$0$}} }}
\newcommand{\Dlp}[1]{{\rm \overline{D}^{\text{$l$}}_{\text{$#1$}} }}
\newcommand{\Drp}[1]{{\rm \overline{D}^{\text{$r$}}_{\text{$#1$}} }}
\newcommand{\Nal}[1]{{\rm N^{*,{\text{$l$}}}_{\text{$#1$}}}}
\newcommand{\Nar}[1]{{\rm N^{*,{\text{$r$}}}_{\text{$#1$}}}}
\newcommand{\Nalone}{{\rm N^{*,{\text{$l$}}}}}
\newcommand{\Narone}{{\rm N^{*,{\text{$r$}}}}}

\newcommand{\indi}[1]{\mathbf 1_{\{ #1 \}}}

 \newcommand{\dpath}{f}
 
 \newcommand{\Grid}[1]{{\rm Grid}_{ #1  }}
  \newcommand{\Gridh}{{\rm Grid}_{ h  }}
 \newcommand{\setinftime}[2]{S_{#1,#2}}

 \newcommand{\supnorm}[2]{ \|#1\|_{#2,\infty} }
  \newcommand{\Supnorm}[2]{ \left\|#1\right\|_{#2,\infty} }

\newcommand{\Leb}{\lambda}
  \newcommand{\Tau}[2]{\tau^{#1}_{#2}}
   \newcommand{\dpathtil}{\widetilde\dpath}
        \newcommand{\stc}{\gamma}
      \newcommand{\stctil}{\widetilde\stc}

\newcommand{\dd}{ { \mathrm{d}} }

\newcommand{\R}{\mathbb {R}}

\newcommand{\BC}{\mathrm{LR}}

\makeatletter
\let\orgdescriptionlabel\descriptionlabel
\renewcommand*{\descriptionlabel}[1]{%
  \let\orglabel\label
  \let\label\@gobble
  \phantomsection 
  \edef\@currentlabel{#1}%
  \let\label\orglabel
  \orgdescriptionlabel{#1}%
}
\makeatother
\allowdisplaybreaks

\begin{document}

\begin{frontmatter}
\title{Boundary conditions for nonlocal one-sided pseudo-differential operators and the associated stochastic processes II\tnoteref{dedication}} 
\tnotetext[dedication]{Dedicated to Mark Meerschaert, you were a great friend and inspiration.}
\fntext[marsden]{Baeumer and Kov\'acs were partially funded by the Marsden Fund administered by the Royal Society of New Zealand.} 
\author{Boris Baeumer\fnref{marsden}} \address{University of Otago, New Zealand}\ead{bbaeumer@maths.otago.ac.nz} 
\author{Mih\'aly Kov\'acs\fnref{marsden}} \address{P\'azm\'any P\'eter Catholic University, Hungary} \ead{kovacs.mihaly@itk.ppke.hu} \author{Lorenzo Toniazzi\fnref{hari}} \address{University of Otago, New Zealand}\fntext[hari]{Toniazzi was fully funded by the Marsden Fund administered by the Royal Society of New Zealand.} \ead{ltoniazzi@maths.otago.ac.nz} 
\begin{abstract} We connect boundary conditions for one-sided pseudo-differential operators with the generators of modified one-sided L\'evy processes. On one hand this allows modellers to use appropriate boundary conditions with confidence when restricting the modelling domain.  On the other hand it allows for numerical techniques based on differential equation solvers to obtain fast approximations of densities or other statistical properties of restricted one-sided L\'evy processes encountered, for example, in finance. In particular we identify a new nonlocal mass conserving boundary condition by showing it corresponds to fast-forwarding, i.e.  removing the time the process spends outside the domain. We treat all combinations of killing, reflecting and fast-forwarding boundary conditions.  

In Part I we show wellposedness of the backward and forward Cauchy problems with a one-sided pseudo-differential operator with boundary conditions as generator. We do so by showing convergence  of Feller semigroups based on grid point approximations of the modified L\'evy process.

In Part II we show that the limiting Feller semigroup is indeed the semigroup associated with the modified L\'evy process by showing continuity of the modifications with respect to the Skorokhod topology. \end{abstract}

\begin{keyword}  nonlocal operator, nonlocal differential equation, spectrally positive L\'evy process, Feller process 

\MSC[2020]{35S15; 60J50; 60J35; 60G51; 60J27}\end{keyword} 
\end{frontmatter}

\tableofcontents

\section{Introduction}
   The last three decades have seen a surge in the application and theoretical development of fractional \cite{MR2218073,MR1658022, MR2090004,MR2884383,Benson2000b,Schumer2009} and nonlocal \cite{T20,MR2343205,MR3156646,MR2780345,MR3987876,D19} integro-differential  equations. An important reason is the discovery of clear probabilistic explanations for solutions and boundary conditions involving  processes with jumps (hence the nonlocality) \cite{MR2884383}, which is the foundation of particle tracking/Monte Carlo numerical methods, e.g. \cite{Zhang2006a}. In contrast to the 
 complete understanding of boundary conditions for one-dimensional  diffusion  (continuous) processes \cite{MR0345224,Peskir15},  one-dimensional L\'evy jump (discontinuous) processes are yet to be fully classified with respect to their   pseudo-differential operators/boundary conditions for the associated backward and forward Kolmogorov equations. Indeed this is an active field of research \cite{T20, MR3467345,  MR2006232, MR3217703,MR3413862,MR3323906,MR4112713}. Identifying boundary conditions is especially complex when one wants to impose a mass conserving boundary condition modeling jumps across the boundary of the domain, as a variety of natural  modifications of the trajectories appear. Four examples are: censoring and its maximal extension \cite{MR2006232}, stochastic reflection  \cite{MR3467345} and fast-forwarding \cite{MR3582209}.  
A particularly well-understood case is the one of the spectrally positive $\alpha$-stable  L\'evy processes $Y$, for $\alpha\in(1,2)$. In fact, the work \cite{MR3720847}, which originates  in \cite{MR3413862}, gives a detailed description of the backward and forward equations identified by restricting trajectories of $Y$ to $[-1,1]$ either by killing, stochastically reflecting or fast-forwarding. In particular, fast-forwarding $Y$ by removing the time $Y$ spends outside $[-1,1]$ results in the nonlocal (or ``reinsertion in the interior'') boundary condition
\begin{equation}\label{eq:ff_intro}
\partial^{\alpha-1}_- f(-1 )=  \int_{0}^{2}f'(y-1) \frac{y^{1-\alpha}}{\Gamma(2-\alpha)} \,\dd y=0
\end{equation}
for the backward equation, where  $\partial^{\alpha-1}_- $ is a (right) Caputo derivative of order $\alpha-1\in(0,1)$ on $(-\infty,1]$. The methods of  \cite{MR3720847} are based on a finite difference/Gr\"unwald approximation of the Feller generator of $Y$, which provides a rigorous and yet intuitive explanation of \eqref{eq:ff_intro}. Briefly, the nonlocal boundary condition \eqref{eq:ff_intro} describes the following conservation of mass: when $Y$    leaves $(-1,\infty)$ by a drift, its mass is redistributed  \textit{inside the domain} (hence the nonlocality) according to the location of $Y$ at its first jump back inside $(-1,\infty)$. This is entirely different from stochastic reflection, where exiting particles are forced to stay on $\{-1\}$, which results in the standard Neumann boundary condition $f'(-1)=0$.  \\

The main purpose of this work is to extend the methods  and results for stable processes in \cite{MR3720847} to  recurrent one-sided L\'evy processes without the aid of scaling properties.  This means characterising the backward and forward Cauchy problems of the restrictions to an interval of $Y$   via a finite difference approximation. For a detailed discussion of these Cauchy problems we refer to the introduction of Part I \cite{BKT20}.\\

We now introduce our one-sided process on $\mathbb R$ (before we restrict it to an interval) and then discuss the results presented in this article.   Spectrally positive L\'evy processes posses a rich and well developed theory   \cite{MR1406564,MR2250061,MR3014147} along with several applications, for example in finance, hydrology, and queues \cite{MR2023021,CG00,CW03,MR1919609,MR2343205,MR0391297,MR1492990,MR2577834,MR3342453}. Importantly, their fluctuation theory features several explicit and semi-explicit formulae not available for most L\'evy processes. Such formulae are   expressed in terms of the scale function $k_0$,   defined by its  Laplace transform $1/\psi$, for $\psi$ being the Laplace exponent of the process   \cite[Chapter VII]{MR1406564}. We follow this tradition obtaining  a full description in terms of scale functions of backward and forward generators of our Feller processes on an interval.  We denote by $Y$ any recurrent  spectrally positive L\'evy process with paths of unbounded variation and no diffusion component.  Our main contribution is Theorem \ref{thm:main_II}, which we reworded below for convenience.
\begin{theorem}\label{thm:main_II_intro}
Restrict $Y$ to a process $\Y{LR}$ on $[-1,1]$ by imposing two boundary conditions at $\{-1,1\}$ for any combination of killing, reflecting and fast-forwarding. Then $\Y{LR}$ is a Feller process with backward and forward  generators given by the Caputo/Riemann–Liouville type operators with boundary conditions in  Table \ref{explicitProcesses_II}.
\end{theorem}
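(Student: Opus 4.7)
My plan is to combine the semigroup convergence established in Part I \cite{BKT20} with a pathwise construction of $\Y{LR}$ obtained by applying Skorokhod-continuous modifications directly to trajectories of $Y$. In Part I it was shown that the grid approximation $\Yh{LR}$ is Feller and that its transition semigroup converges, as $h\downarrow 0$, to a Feller semigroup generated by the operator with the boundary conditions listed in Table \ref{explicitProcesses_II}. What remains is to identify this limiting Feller semigroup with the transition semigroup of the trajectory-modified L\'evy process $\Y{LR}$.

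To this end, I would express each of the three elementary modifications — killing, stochastic reflection, and fast-forwarding — at a single endpoint as an explicit map $\Phi$ on the Skorokhod path space $D([0,\infty),\R)$. Killing is a first-exit stopping; reflection is a Skorokhod-type pushing back of the overshoot into $[-1,1]$; and fast-forwarding is the right-continuous inverse of the clock $t \mapsto \int_0^t \indi{Y_s \in [-1,1]} \dd s$. Composing two such maps, one per endpoint, realises $\Y{LR} = \Phi(Y)$ pathwise. The crucial step is to prove that $\Phi$ is continuous in the Skorokhod $J_1$ topology at almost every L\'evy path of $Y$. Once this is established, the continuous mapping theorem upgrades the known Skorokhod convergence $\Yh{} \Rightarrow Y$ to $\Yh{LR} \Rightarrow \Y{LR}$; combined with the convergence of the $\Yh{LR}$-semigroups from Part I, this forces the limiting Feller semigroup to coincide with the transition semigroup of $\Y{LR}$, and in particular makes $\Y{LR}$ itself Feller with the generator read off from Table \ref{explicitProcesses_II}.

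The main obstacle is the Skorokhod continuity of the fast-forwarding map. The excision clock and its inverse are only continuous on the subspace of \emph{regular} paths, i.e.\ those which neither creep onto $\{\pm 1\}$ nor accumulate Lebesgue time on $\{\pm 1\}$ without locally re-entering the interior. Showing that $Y$ lives almost surely in this subspace is where the two standing hypotheses enter essentially: unbounded variation delivers absolutely continuous overshoot laws at first passage, ruling out creeping exits and guaranteeing that the exterior excursions are immediately non-degenerate; recurrence guarantees that the excision clock is a.s.\ unbounded, so that the inverse clock is defined on all of $[0,\infty)$. Handling both endpoints simultaneously, where reflection/killing at one side interacts with fast-forwarding at the other, is the principal technical burden and will require decomposing the path at the successive first passages of $\{-1,1\}$ and checking continuity of $\Phi$ on each resulting excursion-block separately.
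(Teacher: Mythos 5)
Your strategy coincides with the paper's — express each boundary modification as a map on Skorokhod space (Section~\ref{sec:skor_maps}), establish $J_1$-continuity at almost every path of $Y$, apply a continuous-mapping argument to get $\Yh{LR}\Rightarrow\Y{LR}$ (Lemma~\ref{lem:skor}), and combine with the Trotter--Kato convergence from Part~I — but there are two load-bearing steps your outline passes over. The continuous mapping theorem applied to $Y^h\Rightarrow Y$ produces convergence of the pathwise-modified chains $\Phi(Y^h)$, whereas Part~I controls the semigroups of the Markov chains generated by the interpolated Gr\"unwald matrices; you must prove that these two discrete processes coincide on gridpoints, and for fast-forwarding this is not immediate: one has to compute the exponential holding rate at the boundary gridpoint and the law of the first jump out, which the paper obtains via a resolvent and ergodic calculation for the stopped random walk (Theorem~\ref{thm:ffwaitYh}, Proposition~\ref{prop:Yh_eq_Gh}). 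Without this identification the chain of convergences you describe does not link up.

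Second, the killing maps $\Dl$, $\Dr$ are not continuous on $D([0,\infty),\R)$ and are believed not to be Borel measurable, so the plain continuous mapping theorem is unavailable in those cases; the paper circumvents this by establishing measurability of the closed-barrier variants $\Dlh$, $\Drh$, showing a.s.\ agreement with the open-barrier maps via boundary regularity (Corollary~\ref{cor:for_measurability}), and invoking a weakened CMT (Corollary~\ref{cor:sparated_CMT}). Relatedly, the resolvent identification you get from the gridpoint argument holds a priori only on a dense subset of $(-1,1)$, so you also need to prove that the $\LTp$-resolvent of $\Y{LR}$ maps $C_0(\Omega)$ to itself (Proposition~\ref{prop:cont_semigroup}) before the agreement of resolvents, and hence of semigroups, can be extended to all of $\Omega$.
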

This is obtained by combining the $J_1$-Skorokhod continuity theory developed in 
Section \ref{sec:skor_maps} with the strong convergence of the approximating (backward) semigroups in \cite[Theorem 5.1]{BKT20}. Let us give here a worded description   of the processes $\Y{LR}$.

\begin{table}
\centering
\vline
\begin{tabular}{l|c|c}
  \hline
\hspace{1cm} Process $\Y{LR}$ &
\hspace{.1cm}	Forward generator 
\hspace{.1cm} & \hspace{.1cm} Backward generator
\hspace{.1cm}\\
	\hline
	1.  $\Y{DD}_t=\Dl (\Dr(Y))_t$& $(\Cl , \mathrm{DD})$  & $(\Cr , \mathrm{DD})$   \\
  \hline
2.	$\Y{DN}_t=\Dl  (\Nrone(Y))_t $ & $(\Cl , \mathrm{DN})$ & $(\Cr , \mathrm{DN})$\\
  \hline
3.	 $\Y{ND}_t=\Dr(\Nlone(Y))_t$ & $(\Cl , \mathrm{ND})$ &$(\Cr , \mathrm{ND})$\\
  \hline
4.	$\Y{NN}_t=\Nrone(\Nlone(Y))_t$ & $(\Cl , \mathrm{NN})$ & $(\Cr , \mathrm{NN})$\\
  \hline
5.	$\Y{N^*D}_t= \Dr(\Nalone (Y))_t$ &$(\RLl , \mathrm{N^*D})$ & $(\Cr , \mathrm{N^*D})$\\
  \hline
6.	 $\Y{N^*N}_t= \Nrone(\Nalone (Y))_t$ & $(\RLl , \mathrm{N^*N})$ & $(\Cr , \mathrm{N^*N})$ \\
  \hline 
\end{tabular}\vline
\caption{\label{explicitProcesses_II} This is the same as \cite[Table 1]{BKT20}. It lists the restrictions of the spectrally positive process $Y$ to $[-1,1]$ and the associated forward and backward generators of  strongly continuous contraction semigroups  on    $X=L^1[-1,1]$ and $X=C_0(\Omega)$, respectively. The maps to construct $\Y{LR}$   are defined in Section \ref{sec:skor_maps}. The generators $(\Gen,\BC)$ are defined in \cite[Definition 2.8]{BKT20} and the explicit representation for the domains  of the generators $(\Gen,\BC)$ can be found in \cite[Table 2]{BKT20}.}
\end{table}  

\begin{enumerate}
\item $\Y{DD}$: $Y$  is killed as soon as it leaves $(-1,1)$. 
\item $\Y{DN}$: $Y$ is killed if it drifts across the left boundary. If it jumps across the right boundary we make a time change deleting the time for which $Y$ is to the right of the right boundary. (By  \cite[Lemma 2]{MR1175272}, $\Y{DN}$ equals in law  $\Y{DN^*}$, reflecting $Y$ at the right boundary and then killing it at the left boundary.)
\item $\Y{ND}$: we make a time change deleting the time for which $Y$ is to the left of the left boundary.  This process is then killed if it jumps across the right boundary. 
\item $\Y{NN}$: we make a time change  deleting the time for which $Y$ is outside the domain $(-1,1)$. (This process equals in law $\Y{NN^*}$, i.e. reflecting $Y$ at the right boundary and then fast-forwarding the paths at the left boundary.)
\item $\Y{N^*D}$: $Y$ is reflected at the left boundary and then killed if it jumps across the right boundary.
\item $\Y{N^*N}$: $Y$ is reflected at the left boundary and then fast-forwarded at the right boundary. (This process equals in law the two sided reflection of $Y$ as defined in \cite{MR3467345}, which we prove in Corollary \ref{cor:N*NeqNN}.)
\end{enumerate}

From the descriptions above we see that we treat all possible combinations of $\BC$ for a recurrent $Y$, and in Table \ref{tab:YLR} we list possible alternative definitions for this processes. \\

We obtain as a corollary of Table \ref{explicitProcesses_II} several new and known results concerning exit problems and   resolvent measures for one-sided L\'evy processes. 
The new results   are the representation of the resolvent measures for  $\Y{NN}$ and $\Y{ND}$, the identification of Lebesgue measure as an invariant measure for $\Y{NN}$, and the solution for the  exit problem for  $\Y{ND}$. Note that this exit problem describes a natural quantity, namely the time spent by a spectrally positive L\'evy process  in an interval $[a,b]$ before its first jump above $b$. As for the known results, we provide a new proof of the representation of the resolvent measures for $\Y{DN}$, $\Y{N^*D}$ and  $\Y{N^*N}$, first proved in \cite{MR2054585} and \cite{MR1995924}. Unfortunately, in the  cases $\Y{N^*D}$ and  $\Y{N^*N}$ our work provides a new proof  only by introducing the regularity assumption \cite[(H1)]{BKT20} on the Laplace exponent to tame the singularities arising from the interpolated schemes in  \cite[Section 4]{BKT20} (as discussed in   \cite[Section 1.3]{BKT20}).    (We also recall that we treat only recurrent $Y$ with paths of unbounded variation and no diffusion component.) However,  our proof is based on strong convergence of finite difference schemes and continuity of maps on Skorokhod spaces, which appears very different from  the several existing proofs. This is because none of the existing proofs employ  compound Poisson approximations of $Y$. See for example    \cite{MR2126964} which uses martingale arguments,  \cite{MR2126963} using It\^o excursion theory and \cite{MR2126965} using potential theory, and we refer to \cite{MR3014147,MR4158667, MR2946445,MR3301294} for further discussions of these known results and proofs.  Also worth highlighting that several key steps of our proof are not tied to the one-sidedness of our processes. Thus we believe it can be extended to the study of other processes, such as the symmetric stable process,  which is part of our current investigation. \\


The main technical challenge in this article is proving that the paths of $Y$ are points of continuity of the fast-forwarding map in the $J_1$ topology. In Theorem \ref{thm:ff} (Corollary \ref{cor:ff_twosid}) we give  simple conditions that characterise the points  of continuity of fast-forwarding maps on half-lines (intervals). These conditions cover a wide range of processes with jumps and therefore the presentation in Section \ref{subsec:ff} is given independently of its application in the rest of this article. To illustrate the difficulties behind this result, consider  fast-forwarding below 0, defined for a path $\dpath$  as the composition $\dpath\circ \AF_f^{-1}$, where $\AF_f^{-1}$ is the right inverse of the additive functional $\AF_f(t)=\int_0^t\mathbf 1_{\{\dpath(z)>0\}}\, \dd z$. First of all, note that one must have that the limit path does not spend (Lebesgue) positive time at 0 (see for example Remark \ref{rmk:ff_disc}-(ii)). Once this is assumed, the natural strategy would be to give conditions such that $f_n\to f$ implies $(\dpath_n, \AF_{f_n}^{-1})\to (\dpath, \AF_f^{-1})$ and then use known results on $J_1$ continuity of composition of functions (see, e.g., \cite[Theorem 13.2.2]{MR1876437}). However, to the best of our knowledge, existing results   do not cover one-sided  processes, essentially because,   almost surely,  if  $t$  is a point of discontinuity of $\AF_f^{-1}$, then $\dpath$ is discontinuous at $\AF_f^{-1}(t)$, as we illustrate in Remark \ref{rmk:ff_disc}-(iii). Another issue is that proving this joint convergence to $(\dpath, \AF_f^{-1})$  is a large part of our (constructive) proof, which would not be significantly shortened even if we were to assume that $\dpath$ is continuous (to apply, say, \cite[Theorem 13.2.2]{MR1876437}).  \\
Another important results is Theorem \ref{thm:ffwaitYh}, where we identify the transition rate matrix of fast-forwarding the discrete  Gr\"unwald type approximating process. This allows us to identify  the fast-forwarding boundary condition (in the limit) as discussed in Part I \cite[Section 1.3]{BKT20}. Also note that this work gives the details for the Skorokhod continuity part  of \cite{MR3720847}. Let us finally remark that the articles  \cite{MR3582209, MR1175272, MR3896857} applied fast-forwarding to a one-sided  L\'evy  process, and \cite{MR3582209}  is  the only one that applied fast-forwarding to the boundary where the (stable) process exits by a drift.\\

This work structured as follows: Section \ref{sec:prelim} introduces basic results on stochastic processes and Skorokhod spaces; Section \ref{sec:skor_maps} studies the continuity properties of our maps on the Skorokhod spaces; Section \ref{sec:YandYh} applies these maps to $Y$ and its approximation and connects the latter to the approximating processes constructed in \cite[Section 3.3]{BKT20}; Section \ref{sec:main_II} applies the continuity results to these processes to derive our main results.

\section{Preliminaries}\label{sec:prelim}

We introduce basic notation and results  stochastic processes in Section \ref{subsec:SP} and Skorokhod spaces \ref{subsec:skor}. We will use the standard notation $\mathbb R$, $\mathbb N$, $\mathbb Z$,  $C_c^\infty(\mathbb R)$,  $L^1(E)$, $a\wedge b$, $a\vee b$ and $\mathbf1_A$, which we defined in \cite{BKT20}. Also, we denote by $\|\cdot\|_X$ the norm of a Banach space $X$,  and by $C_0(E)$   the Banach space of real-valued continuous functions on $E$ vanishing at infinity with the supremum norm for $E$ a locally compact metric space \cite[Page 1]{MR3156646}. For a complex valued function $f$ with domain containing $E$, we write $\supnorm{f}{E}=\sup\{|f(x)|:x\in E\}$. We recall the convention that $\Omega$ refers to the interval \[\Omega\in\Big\{(-1,1),[-1,1),(-1,1],[-1,1]\Big\}\] with an endpoint excluded if the problem has a Dirichlet boundary condition (D) there. The symbol $\Leb$ is reserved for the Lebesgue measure (contrarily to Part I \cite{BKT20}).

\subsection{Stochastic processes}\label{subsec:SP} If $E $ is a metric space we denote by $\mathcal B(E)$ its Borel $\sigma$-algebra, and if $\{X_n\}_{n\in \mathbb N},X$ are random variables on $E $, we say  that $\{X_n\}_{n\in \mathbb N}$ converges weakly to $X$, denoted by  $X_n\Rightarrow X$ as $n\to \infty$, if $\mathbb E[g(X_n)]\to \mathbb E[g(X)]$ for all real-valued bounded  continuous functions on $E $. We recall some basic facts about spaces of c\`adl\`ag (right continuous with left limits) functions that can be found, for example, in \cite[Chapter  VI.1]{MR1943877}.
Let $D([0,\infty), \skorss )$ be the  space of c\`adl\`ag functions $\dpath:[0,\infty)\to \skorss$   for $E$ being a locally compact separable metric space. We always    equip  $D([0,\infty), \skorss )$  with the $J_1$ Skorokhod topology and the induced Borel $\sigma$-algebra $\mathcal D_{J_1}$, and recall that  $D([0,\infty), \skorss )$ is a Polish space. For each $t\ge0$ we define the projection map $\pi_t: D([0,\infty), \skorss )\to \skorss  $ by $\pi_t(\dpath)=\dpath(t)$ and recall that $\pi_t$ is measurable. By a c\`adl\`ag   process (with state-space $E$) we mean a probability measure $\mathbb P$ on $D([0,\infty), \skorss )$. So we construct the random variables $(X_t)_{t\ge 0}$ such that $\mathbb P[X_t\in B]=\mathbb P\circ \pi_t^{-1}(B)$,   $B\in \mathcal B(\skorss )$, $t\ge0$. Then by \cite[Problem 1, Chapter 2]{MR838085} and standard relabelling,  we can identify any  c\`adl\`ag   process with a progressive   process $X:D([0,\infty), \skorss )\times [0,\infty)\to \skorss $   with respect to the $\mathbb P$-completion of the natural filtration of $X$, in the terminology of \cite[Chapter 2]{MR838085}. Vice versa, we identify any progressive   process with state-space $E$ and c\`adl\`ag paths with a c\`adl\`ag process on  $D([0,\infty), \skorss )$, by Kolmogorov's Extension Theorem. We use the standard terminology defined  in \cite[Section 1, Chapter 2]{MR838085} for stopping times, their filtrations and related properties. For any $B\in \mathcal B(\skorss )$ we define the first exit time from $B\in\mathcal B(\skorss )$ for     $X$  by
\[\tau^X_B=\inf\big\{t>0: X_t\not\in B\big\},\]
with the convention that $\inf\emptyset=\infty$.  Note that by \cite[Theorem 1.6, Chapter 2]{MR838085}, assuming that $X$  starts at $x\in B$ and it is progressive with respect to a complete right continuous filtration, if $B$ open or closed, then $\tau^X_B$ is a stopping time with respect to this filtration. We remark that in order to simplify notation within certain proofs, the symbols $\tau$ and $\tau_n$ might be defined differently in different proofs. By convention, for a bounded measurable function $g:D([0,\infty), \skorss )\to \skorss $  and denoting by $\mu$ the   distribution of $X_0$, we write $\mathbb E_\mu[g(X)]=\mathbb E[g(X)]$ and $\mathbb P_\mu[X\in B]=\mathbb P[X\in B]$ for $B\in \mathcal D_{J_1}$, and substitute $\mu$ with $x\in \skorss $ if $\mu$ is the Dirac delta measure at $x$.  
We say that $X$ is a (time-homogeneous) Markov or  strong Markov process (with respect to its natural filtration) according to the standard definitions in \cite[pages 156 and 158]{MR838085} , respectively.
 We say that $X$ is a Feller process if $X$ is a strong Markov process and for each    $g\in C_0
  (E)$, $x\mapsto\mathbb E_x[g(X_t)]\in C_0
  (E)$ for every $t>0$ and $\|\mathbb E_{\cdot}[g(X_t)]-g\|_{C_0
  (E)}\to 0$ as $t\downarrow0$. We recall that the completion of the natural filtration of a Feller process is right continuous \cite[Theorem 2.7, Chapter 2]{MR838085}, and that Feller processes with state space $E$ are in one-to-one correspondence with Feller semigroups on $C_0
  (E)$ as defined in \cite[Chapter 1]{MR3156646}.  
For any c\`adl\`ag process $X$ (or a c\`adl\`ag path)  and $B\in \mathcal B(E)$ we define the  continuous non-decreasing   functional \[ t\mapsto A^X_B(t)=\int_0^t\mathbf 1_{\{X_s\in B\}}\,\dd s\] 
and its right continuous inverse taking values in $[0,\infty]$
$$(\AF^X_B)^{-1}(t)=\inf\big\{s>0: \AF^X_B(s)>t\big\},\quad t\in[0,\infty).
$$ 
We will often ease notation by writing $\AF^X_B=\AF_X$ and $(\AF^X_B)^{-1}=(\AF_X)^{-1}$ when the dependence on $B$ is clear. For any Markov process $X$ in this work $\AF^X_B$ will be a continuous additive functional for $X$ in the sense of \cite[Definition IV.1.15]{MR0264757}. 
We say that $X$ is a L\'evy process if $E=\mathbb R$ and $X$ is a   c\`adl\`ag  process with independent and stationary increments, and we recall that if $X$ has paths of unbounded variation, then, for any $x\in(a,b)$, 
\begin{equation}\label{prop:exit_open_closed}
 \tau^X_{(-\infty,b)} = \tau^X_{(-\infty,b]} \quad\text{and}\quad \tau^X_{(a,\infty)} = \tau^X_{[a,\infty)}, \quad \text{$\mathbb P_x$-a.s},  
\end{equation}
which is a consequence of the strong Markov property and regularity of $(-\infty,0)$ and $(0,\infty)$ for 0 \cite[Theorem 6.5]{MR2250061}.\\
A continuous time Markov process $X=(X_t)_{t\ge 0}$ for a collection of initial distributions $X_0\in S_0\subset \skorss$ is a \textit{discrete-space continuous-time Markov process}
  if for each $X_0$ the range $S$ of $X$ is countable and its transition rate matrix $Q=R(P-I)$  has bounded rates, i.e. $\sup_{i\in S}R_i<\infty$, where $P=(P_{i,j})
_{i,j\in S}\ge 0$ is such that $\sum_j P_{i,j}=1$, $R=(R_{i})_{i\in S}$ is a diagonal matrix with finite non-negative entries, and $I$ is the identity matrix.   Then, for a given initial distribution any $X$ is characterised by its transition rate matrix $Q$ \cite[Chapter 4.2]{Stroock13} and we deduce from  \cite[Eq. (4.2.1)]{Stroock13} that for all $i\in S_0$ and $j\in S$
\[ 
e^{-t R_i} = \mathbb P_i[J_1>t]= \mathbb P_i\big[X_s=i \,\,\rm{for\, all}\,\,s\in[0,t]\big] , 
\]
where $J_1$ is the time of the first jump of $X$, and
\[ P_{i,j}=\lim_{t\downarrow 0}\mathbb P_i\big[t<J_1,\, X_{J_1}=j\big],
\]
and note that the above implies that $P_{i,i}=0$ for all $i$.
Recall that by the ergodic result in \cite[Eq. (4.4.11)]{Stroock13},  the $j$-th entry of the vector $ \lim_{\LTp\downarrow 0} \LTp(\LTp I-Q^T)^{-1} \vec e_{i}$ is given by \begin{equation}\label{eq:ergodic_stroock}
  \lim_{\LTp\downarrow 0} \LTp \int_0^\infty e^{-\LTp t} \mathbb P_i\big[X_t=j\big]\,\dd t =\mathbb P_i\left[\sigma_j^{X}<\infty \right] ,\quad \text{if }i\neq j\text{ and }R_j=0,
\end{equation}
 where   $\sigma_j^X:=\inf\{t\ge J_1: X_t= j\}$, $Q^T$ is the transpose of $Q$ and $\vec e_{i}$ is the vector with $i$-th entry equal to 1 and 0 otherwise.

\subsection{Skorokhod spaces and the Continuous Mapping Theorem}\label{subsec:skor}
For $T\in(0,\infty)$, let $D([0,T],\mathbb R)$ and $D([0,\infty),\mathbb R)$ be the spaces real-valued c\`{a}dl\`{a}g functions on $[0,T]$ and $[0,\infty)$, respectively, and we equip the first space with the Skorokhod metric
\[
d_{J_1,T}(\dpath ,g):= \inf_{\gamma\in \Gamma}\Big\{ \| \gamma -I \|_{C[0,T]} \vee \supnorm{\dpath (\gamma)-g }{[0,T]}\Big\},
\]
where $\Gamma $ is the space of strictly increasing continuous bijections of $[0,T]$ to itself and $I\in \Gamma$ is the identity map, and the second space with the Skorokhod metric
\[
d_{J_1}(\dpath ,g):= \sum_{m=1}^\infty 2^{-m}(1\wedge d_{J_1,m}(\dpath |_m,g|_m))
\]
with $\dpath|_m$ and $g|_m$ respectively being the smoothed restrictions  to $D([0,m],\mathbb R)$ (as defined in \cite[Eq. (16.3)]{MR1406564})  of $\dpath $ and $g$. These metrics induce the Skorokhod $J_1$ topology on the respective spaces \cite[Sections 12 and 16]{MR1406564}. We will use the following simple property
\begin{equation}\label{eq:J1scaleout}
d_{J_1}(\dpath ,c\dpath )\le |1-c|,\quad \text{for any }c\in\mathbb R,\,\, \dpath\in D([0,\infty),\mathbb R),
\end{equation} 
which is follows from  $
d_{J_1,m}(\dpath |_m,(c\dpath )|_m)\le |1-c|$. 
We recall that  $\dpath_n\to f$ as $n\to\infty$ in $D([0,\infty),\mathbb R)$ if and only if there exists a sequence $\{T_m\}_{m\in \mathbb N}$ of continuity points of $\dpath$ such that $T_m\to \infty$ and  for each $m\in\mathbb N$  the restrictions of $f_n$ and $f$ to $[0,T_m]$ are such that $\dpath_n\to f$ in $D([0,T_m],\mathbb R)$ as $n\to \infty$ \cite[Section 12.9]{MR1876437}. We denote one dimensional open balls by $B_r(x)=\{y\in\mathbb R:|y-x|<r\}$ for any $x\in \mathbb R$ and $r>0$. We recall that the Borel $\sigma$-algebra on $D([0,T],\mathbb R)$ (respectively $D([0,\infty),\mathbb R)$) equals the smallest $\sigma$-algebra  containing  $\{\pi^{-1}_t(B_{r}(x)):x\in\mathbb R,r>0,t\in \mathcal T)\}$ for $\mathcal T$ a dense set in $[0,T]$ containing $T$ (respectively  a dense set  in $[0,\infty)$) \cite[Theorems 12.5.iii and 16.6.iii]{MR1700749}. Then, for any closed set $C\subset\mathbb R$ and reals $a<b$
  the sets\
  \begin{equation}\label{eq:tunnels_measurable}
     \big\{f: f(t)\in C\,\text{for all }t\in [a,b]     \big\}\quad\text{and}\quad      \big\{f:   f(t)\in C^c \text{ for some }t\in [a,b]\big\}
  \end{equation}
 are measurable in $D([0,\infty),\mathbb R)$. Also,  by the argument in \cite[page 232]{MR0233396}, for any $B\in \mathcal B(\mathbb R)$ the map from $D([0,\infty),\mathbb R)$ to $[0,\infty]$ 
 \begin{equation}\label{eq:timeintunnel_measurable}
 f\mapsto \Leb(t\in [0,\infty):\,f(t)\in B),
 \end{equation}
 is Borel measurable, and for completeness we give the proof in Appendix \ref{app:additional}.
  To ensure that the fast-forwarding maps of Section \ref{subsec:ff} are well-defined we sometimes restrict our analysis to  paths that spend infinite amount of time in the interval $C$ (see Remark \ref{rmk:ff_disc}-(i)).  Namely, for any $a,b$ such that $-\infty \le a<b\le\infty$ we define the sub-space of $D([0,\infty),\mathbb R)$ by
\[
D_{a,b}([0,\infty),\mathbb R)= D([0,\infty),\mathbb R)\cap \setinftime{a}{b} , 
\]
where 
\[
 \setinftime{a}{b} := \big\{\dpath  \in D([0,\infty),\mathbb R): \Leb (t\in[0,\infty ): \dpath (t)\in (a,b))=\infty \big\}.
\]
 We recall two classical theorems and a standard proposition. Below, for a function $g$ between two metric spaces, we denote by ${\rm Disc}(g)$ the set of points of discontinuity of $g$.
 
\begin{theorem}\label{thm:CMT}(Continuous Mapping Theorem \cite[Theorem 3.4.3]{MR1876437})
 If $X_n\Rightarrow X$ on $E $ and  $g:E \to E' $ is a measurable map between two separable  metric spaces satisfying $\mathbb P[ X\in {\rm Disc}(g)]=0$, then  $g(X_n)\Rightarrow g(X)$ on $E' $.
\end{theorem}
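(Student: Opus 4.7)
The plan is to invoke the Portmanteau characterisation of weak convergence, which reduces the claim to proving $\limsup_{n\to\infty}\mathbb{P}[g(X_n)\in F]\le \mathbb{P}[g(X)\in F]$ for every closed set $F\subset E'$. Pulling back through $g$, this amounts to bounding $\limsup_{n\to\infty}\mathbb{P}[X_n\in g^{-1}(F)]$ by $\mathbb{P}[X\in g^{-1}(F)]$. Since $g$ is only measurable, the set $g^{-1}(F)\subset E$ need not be closed, so the Portmanteau inequality for $X_n\Rightarrow X$ cannot be applied to it directly, and one has to pass through its closure.

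The key topological input I would establish is the inclusion
\[
\overline{g^{-1}(F)}\subset g^{-1}(F)\cup \mathrm{Disc}(g).
\]
Indeed, if $x_k\in g^{-1}(F)$ and $x_k\to x$ in $E$, then either $x\in \mathrm{Disc}(g)$, or $g$ is continuous at $x$, in which case $g(x_k)\to g(x)$ in $E'$ and closedness of $F$ forces $g(x)\in F$, i.e.\ $x\in g^{-1}(F)$.

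Combining these, I would apply Portmanteau on $E$ to the closed set $\overline{g^{-1}(F)}$ and use $X_n\Rightarrow X$ to get
\[
\limsup_{n\to\infty}\mathbb{P}[X_n\in g^{-1}(F)]\le \limsup_{n\to\infty}\mathbb{P}[X_n\in \overline{g^{-1}(F)}]\le \mathbb{P}[X\in \overline{g^{-1}(F)}],
\]
and then, using the inclusion above together with the hypothesis $\mathbb{P}[X\in \mathrm{Disc}(g)]=0$,
\[
\mathbb{P}[X\in \overline{g^{-1}(F)}]\le \mathbb{P}[X\in g^{-1}(F)]+\mathbb{P}[X\in \mathrm{Disc}(g)]=\mathbb{P}[g(X)\in F].
\]
Chaining the two displays yields the desired Portmanteau bound on $E'$, and hence $g(X_n)\Rightarrow g(X)$.

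The subtle point I expect to need to dispatch is measurability: the hypothesis $\mathbb{P}[X\in \mathrm{Disc}(g)]=0$ is only meaningful once one knows $\mathrm{Disc}(g)$ is Borel in $E$. This follows from the classical fact that the oscillation $x\mapsto \mathrm{osc}(g,x)$ of any map between metric spaces is upper semicontinuous, so $\mathrm{Disc}(g)=\bigcup_{k\in\mathbb{N}}\{x:\mathrm{osc}(g,x)\ge 1/k\}$ is an $F_\sigma$ set. Separability of $E,E'$ is not actually used in the argument above; it would enter only in the alternative Skorokhod-representation proof, where one would pick almost-surely convergent versions $\tilde X_n\to \tilde X$, observe that $g(\tilde X_n)\to g(\tilde X)$ almost surely on the probability-one event $\{\tilde X\notin \mathrm{Disc}(g)\}$, and conclude $\mathbb{E}[h(g(\tilde X_n))]\to \mathbb{E}[h(g(\tilde X))]$ by bounded convergence for any bounded continuous $h:E'\to\mathbb{R}$.
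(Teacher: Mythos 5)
Your proof is correct, but note that the paper does not prove Theorem~\ref{thm:CMT} at all: it is quoted as a known result with a citation to Whitt, and the paper's adjacent material (the proof of Corollary~\ref{cor:sparated_CMT}) proceeds, as does the cited source, by the Skorokhod representation route that you sketch only as an aside at the end. So your primary argument takes a genuinely different path from the one the paper leans on. Your Portmanteau argument is the cleaner of the two in one respect you correctly identify: it needs only that $E$ and $E'$ be metric, whereas the Skorokhod-representation proof needs separability of $E$ to produce a.s.\ convergent versions. The pullback inclusion $\overline{g^{-1}(F)}\subset g^{-1}(F)\cup\mathrm{Disc}(g)$ is exactly right, the measurability of $g^{-1}(F)$ and of the closure are in place, and your remark that $\mathrm{Disc}(g)$ is an $F_\sigma$ set via upper semicontinuity of the oscillation function disposes of the only genuine measurability worry (namely that the hypothesis $\mathbb{P}[X\in\mathrm{Disc}(g)]=0$ be well posed). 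On the other hand, the Skorokhod-representation proof has the advantage in this paper's context that it adapts directly to the slightly nonstandard Corollary~\ref{cor:sparated_CMT}, where the map applied to $X_n$ is allowed to differ from $f$ on a null set depending on $n$; that variant would be awkward to run through a Portmanteau pullback, since $g_n^{-1}(F)$ is no longer a fixed set. Both proofs are valid; yours is more elementary and more general in the assumptions, the paper's preferred route is more flexible for the corollary it actually needs.
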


\begin{theorem}\label{thm:skor_repr}(Skorokhod Representation Theorem \cite[Theorem 3.2.2]{MR1876437})
 If $X_n\Rightarrow X$ on a separable metric space $E $, then there exist $\widetilde X_n, \widetilde X$ on a common probability space such that
 $\widetilde X_n= X_n$ and   $\widetilde X= X$ in law, and $\mathbb P[\lim_{n\to \infty}\widetilde X_n = \widetilde X]=1$
\end{theorem}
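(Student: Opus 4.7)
The plan is to give a proof of the Skorokhod Representation Theorem in the generality of a separable metric space $E$, following the standard ``partition--and--couple'' strategy. As a warm-up, note the one-dimensional case: if $F_n$ and $F$ are the cumulative distribution functions of real-valued $X_n \Rightarrow X$, then $F_n(t) \to F(t)$ at every continuity point $t$ of $F$. Taking $U$ uniformly distributed on $([0,1], \Leb)$ and defining $\widetilde X_n = F_n^{-1}(U)$ and $\widetilde X = F^{-1}(U)$ via generalised inverses, one checks that $\widetilde X_n \to \widetilde X$ almost surely at those $u$ for which $F^{-1}$ is continuous, and the set of discontinuities of $F^{-1}$ has Lebesgue measure zero. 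This inverse-CDF trick does not generalise directly to arbitrary $E$, so a different construction is needed.

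For general separable $E$, the first step I would carry out is to construct, for each $m \in \mathbb N$, a countable Borel partition $\{B_k^{(m)}\}_{k \ge 1}$ of $E$ such that each $B_k^{(m)}$ has diameter at most $1/m$ and $\mathbb P[X \in \partial B_k^{(m)}] = 0$ for every $k$. The construction uses separability: pick a countable dense set in $E$ and, around each of its points, choose a radius in $(1/(2m), 1/m)$ whose sphere carries zero $X$-mass, which is possible because only countably many concentric spheres can have positive $X$-mass. Taking set-theoretic differences to disjointify yields the required partition. Because $\mathbb P[X \in \partial B_k^{(m)}] = 0$ and $X_n \Rightarrow X$, the Portmanteau theorem gives $\mathbb P[X_n \in B_k^{(m)}] \to \mathbb P[X \in B_k^{(m)}]$ for every $k$.

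Next, working on the common probability space $(\Omega, \mathbb P) = ([0,1]^2, \Leb^{\otimes 2})$, for each pair $(n,m)$ I would partition the first coordinate into subintervals of lengths $\mathbb P[X \in B_k^{(m)}]$ and $\mathbb P[X_n \in B_k^{(m)}]$ arranged so that the $X$-cell indexed by $k$ overlaps the $X_n$-cell indexed by $k$ as much as possible. Within each cell, I would use the second coordinate to sample from the conditional laws $\mathcal L(X_n \mid X_n \in B_k^{(m)})$ and $\mathcal L(X \mid X \in B_k^{(m)})$, invoking the standard fact that any Borel probability measure on a Polish space is the pushforward of Lebesgue measure on $[0,1]$ under a measurable map. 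This produces approximations $\widetilde X_n^{(m)}$ and $\widetilde X^{(m)}$ with the correct marginals, and such that, with probability tending to $1$ as $n \to \infty$ (for $m$ fixed), both land in the same cell $B_k^{(m)}$, hence are within distance $1/m$ of each other.

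The main obstacle, which I expect to be the technical heart of the argument, is combining these $m$-indexed couplings into a \emph{single} coupling $(\widetilde X_n, \widetilde X)$ with $\widetilde X_n \to \widetilde X$ almost surely. I would choose an increasing sequence $m = m(n) \uparrow \infty$ slowly enough that the cell-matching probabilities still tend to $1$, and then control the event that $\widetilde X_n$ and $\widetilde X$ fail to lie in a common cell of diameter $1/m(n)$ via a Borel--Cantelli estimate. The delicate point is preserving marginals while diagonalising: the marginal of $\widetilde X_n$ must remain the law of $X_n$ for every $n$ simultaneously, which is why the conditional-law construction must be kept uniform across $m$. Once this is arranged, completeness of the metric on $E$ ensures the almost-sure limit exists and equals $\widetilde X$, concluding the proof. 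For the application in this paper one takes $E = D([0,\infty), \R)$ with the $J_1$ topology, which is Polish, so all hypotheses are met.
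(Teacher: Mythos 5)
The paper does not prove this statement; it is a classical result cited to \cite[Theorem~3.2.2]{MR1876437}, so there is no paper proof to compare against. Judged on its own terms, your outline follows the standard ``partition-and-couple'' strategy (small-diameter cells with $X$-null boundaries, Portmanteau, matching cells on a product of unit intervals), and that is the right skeleton. Two concrete gaps remain. First, the step ``any Borel probability measure on a Polish space is the pushforward of Lebesgue measure'' restricts you to $E$ complete, whereas Whitt's statement (and the statement you are asked to prove) requires only separability; the Dudley--Wichura form of the theorem covers separable $E$ and its proof conditions on the cells directly rather than passing through a Lebesgue-pushforward representation. For the paper's application to $D([0,\infty),\R)$ this loss of generality is harmless, but as written you prove a weaker theorem. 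Second, the diagonalization you flag as the ``technical heart'' is not actually carried out, and as set up it cannot work: choosing $m=m(n)$ makes $\widetilde X^{(m(n))}$ depend on $n$, so you are left with no single random variable $\widetilde X$. The standard resolution is to use a \emph{nested} sequence of partitions (each refining the previous one), construct $\widetilde X$ once and for all on $[0,1]$ with law $\mathcal L(X)$, and then build each $\widetilde X_n$ by conditioning on which cell of the $m$-th partition $\widetilde X$ lies in, with the probability mismatch pushed into a remainder event of vanishing mass. Finally, the appeal to completeness at the very end is misplaced: $\widetilde X$ is constructed directly, and one shows $d(\widetilde X_n,\widetilde X)\to 0$ a.s.\ by Borel--Cantelli; no Cauchy-completeness argument is needed or used.
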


\begin{proposition}\label{prop:asthenweak}
If $E $ is a metric space and $\{X_n\}_{n\in \mathbb N},X$ are random variables on the same probability space on  $E $ satisfying   $\mathbb P[\lim_{n\to \infty}X_n = X]=1$, then $X_n\Rightarrow X$ as $n\to \infty$.  Moreover, if $E =D([0,\infty),\mathbb R)$ and if $\mathbb P[\pi_{t} (X)=\pi_{t-} (X)]=1$ for some $t>0$, then $\pi_{t}$ is continuous at $X$ $\mathbb P$-a.s. and  $\pi_{t} (X_n)\Rightarrow \pi_{t} (X)$  as $n\to \infty$ on $\R$.   
\end{proposition}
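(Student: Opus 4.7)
For the first statement, I would reduce to a routine application of bounded (dominated) convergence: given any bounded continuous $g:E\to\mathbb R$, continuity of $g$ combined with $X_n\to X$ $\mathbb P$-a.s.\ yields $g(X_n)\to g(X)$ $\mathbb P$-a.s., and boundedness then permits passing to the limit inside the expectation, giving $\mathbb E[g(X_n)]\to\mathbb E[g(X)]$; this is precisely the definition of $X_n\Rightarrow X$.

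For the second statement the key step is the deterministic claim that $\pi_t:D([0,\infty),\mathbb R)\to\mathbb R$ is continuous at every path $f$ satisfying $f(t)=f(t-)$. I would verify this directly from the definition of $d_{J_1}$: assuming $f_n\to f$ in $D([0,\infty),\mathbb R)$, pick any continuity point $m>t$ of $f$ so that $f_n|_m\to f|_m$ in $D([0,m],\mathbb R)$. By the definition of $d_{J_1,m}$ there then exist $\gamma_n\in\Gamma$ with
\[
\|\gamma_n-I\|_{C[0,m]}\to 0\quad\text{and}\quad \supnorm{f_n\circ\gamma_n - f}{[0,m]}\to 0.
\]
Setting $s_n:=\gamma_n^{-1}(t)$, the first convergence gives $s_n\to t$, and the identity $f_n(t)=(f_n\circ\gamma_n)(s_n)$ combined with the uniform convergence above and continuity of $f$ at $t$ forces $f_n(t)\to f(t)$.

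With this criterion in hand, the hypothesis $\mathbb P[\pi_t(X)=\pi_{t-}(X)]=1$ says exactly that $\mathbb P[X\in{\rm Disc}(\pi_t)]=0$, which is the first assertion of the moreover-part. The first part of the proposition supplies $X_n\Rightarrow X$ on $D([0,\infty),\mathbb R)$, and a single application of the Continuous Mapping Theorem (Theorem \ref{thm:CMT}) then delivers $\pi_t(X_n)\Rightarrow \pi_t(X)$ on $\mathbb R$. The only mildly technical ingredient is the $J_1$-continuity criterion for $\pi_t$; everything else is textbook, and I do not anticipate any real obstacle.
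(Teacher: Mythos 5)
Your proposal is correct and follows the same route as the paper: the first part is the standard dominated-convergence argument, and the second part reduces to the a.s.\ continuity of $\pi_t$ plus the Continuous Mapping Theorem (Theorem \ref{thm:CMT}). The only difference is that the paper simply cites \cite[Theorem 16.6.i]{MR1700749} for the fact that $\pi_t$ is $J_1$-continuous at every path that is continuous at $t$, whereas you prove this criterion directly from the definition of $d_{J_1}$; your direct argument is accurate.
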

\begin{proof} The first statement is standard. The second statement follows by \cite[Theorem 16.6.i]{MR1700749} and  Theorem \ref{thm:CMT}.\end{proof}

The next corollary is a simple weakening of the Continuous Mapping Theorem (CMT), which we need when dealing with the killing maps in Section \ref{sec:skor}.

\begin{corollary}\label{cor:sparated_CMT}
 Suppose that $X_n\Rightarrow X$ as $n\to \infty$  on $E $, for each $n\in\mathbb N\cup\{0\}$  $g_n:E \to E' $ is a measurable map between two separable   metric spaces  and let $f:E \to E' $  satisfy $\mathbb P[ X\in {\rm Disc}(f)]=0$. If it also holds   for each $n\in\mathbb N$ that
 \begin{equation}\label{eq:measurforCMT}
     g_n(X_n)=f(X_n)\,\, \text{a.s.}\quad\text{and}\quad   g_0(X)=f(X)\,\, a.s.,
 \end{equation}
 then  $f(X_n)\Rightarrow f(X)$  as $n\to \infty$ on $E' $.
\end{corollary}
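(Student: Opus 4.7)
The plan is to combine the Skorokhod Representation Theorem (Theorem \ref{thm:skor_repr}) with the hypothesised almost-sure identifications in \eqref{eq:measurforCMT} to emulate the Continuous Mapping Theorem (Theorem \ref{thm:CMT}) without needing $f$ itself to be plugged into it directly. The $g_n$'s will play the role of measurable ``versions'' of $f$ along the laws of $X_n$ and $X$, while the continuity hypothesis on $f$ at $X$ will only be used pointwise on a full-measure event.

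First I would apply Theorem \ref{thm:skor_repr} to $X_n\Rightarrow X$ on the separable metric space $E$ to obtain random variables $\widetilde X_n$ and $\widetilde X$ on a common probability space with $\widetilde X_n\stackrel{d}{=} X_n$, $\widetilde X\stackrel{d}{=} X$ and $\widetilde X_n\to \widetilde X$ almost surely. Since $\widetilde X\stackrel{d}{=} X$, the hypothesis $\mathbb P[X\in\mathrm{Disc}(f)]=0$ gives $\mathbb P[\widetilde X\in\mathrm{Disc}(f)]=0$; and since $\widetilde X_n\stackrel{d}{=} X_n$, the identifications in \eqref{eq:measurforCMT}, which describe events that depend only on the laws of $X_n$ and $X$, transfer to the new space as
\[
g_n(\widetilde X_n)=f(\widetilde X_n)\text{ a.s.}\quad\text{and}\quad g_0(\widetilde X)=f(\widetilde X)\text{ a.s.}
\]
On the intersection of these full-measure events with the one where $\widetilde X_n\to \widetilde X$ and $\widetilde X$ is a continuity point of $f$, one obtains
\[
g_n(\widetilde X_n)=f(\widetilde X_n)\longrightarrow f(\widetilde X)=g_0(\widetilde X),
\]
and hence $g_n(\widetilde X_n)\to g_0(\widetilde X)$ almost surely. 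Proposition \ref{prop:asthenweak} then yields $g_n(\widetilde X_n)\Rightarrow g_0(\widetilde X)$ on $E'$, and since by construction $g_n(\widetilde X_n)\stackrel{d}{=}g_n(X_n)=f(X_n)$ a.s.\ and $g_0(\widetilde X)\stackrel{d}{=}g_0(X)=f(X)$ a.s., this is equivalent to $f(X_n)\Rightarrow f(X)$.

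The main technical point I expect is the transfer of the a.s.\ identifications in \eqref{eq:measurforCMT} to the Skorokhod-coupled probability space: one must argue that the relevant ``bad sets'' carry zero mass under the laws of $X_n$ and $X$ (and therefore under those of $\widetilde X_n$ and $\widetilde X$), which is conceptually simple but slightly delicate because $f$ need not be measurable on its own. This is resolved by always treating $f(X_n)$ and $f(X)$ through their measurable a.s.\ representatives $g_n(X_n)$ and $g_0(X)$, so that every probabilistic statement in the argument involves only measurable quantities. Once this is in place the rest reduces to pointwise continuity along an a.s.\ convergent sequence, which is essentially the standard CMT mechanism.
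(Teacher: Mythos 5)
Your proof matches the paper's own argument: both apply the Skorokhod Representation Theorem to $X_n\Rightarrow X$, use the measurable maps $g_n$ as surrogates for $f$ via the identifications in \eqref{eq:measurforCMT}, invoke continuity of $f$ at $\widetilde X$ on a full-measure event to obtain $g_n(\widetilde X_n)\to g_0(\widetilde X)$ a.s., and conclude with Proposition~\ref{prop:asthenweak}. You have also correctly isolated the only delicate step, namely transferring the a.s.\ identifications across the Skorokhod coupling; this is precisely the ``simple adaptation'' the paper alludes to, and it is unproblematic in the intended applications because the agreement sets $\{g_n=f\}$ there contain Borel sets of full measure under the laws of $X_n$ and $X$ (cf.\ Corollary~\ref{cor:for_measurability}).
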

\begin{proof}
The proof is a simple adaptation of the standard argument in \cite[Theorem 3.2.2]{MR1876437}. Namely, construct the pushforward measures of $f(X_n)$ and $f(X)$  using  \eqref{eq:measurforCMT}, then use Theorem \ref{thm:skor_repr} to obtain versions of $X_n$ and $X$ such that  $\widetilde X_n\to \widetilde X$ a.s. in $E $, so that $g_n(\widetilde X_n)\to g_0( \widetilde X)$  a.s. in $E' $  by the continuity assumption on $f$. Then \eqref{eq:measurforCMT} and Proposition \ref{prop:asthenweak} conclude the proof.

\end{proof}

\section{Reflecting, killing  and fast-forwarding maps}\label{sec:skor_maps}

In this section we study the properties of several maps on (subspaces of) $D([0,\infty),\R)$  which impose either   reflecting, killing or fast-forwarding boundary conditions.   The key result is the characterisation of the continuity points of fast-forwarding maps (one and two-sided). These maps are interesting in their own right and therefore we present them with no reference to Part I \cite{BKT20}. 

\subsection{Reflecting ($\rm{N^*}$)}
\begin{definition}\label{def:refl}
For any   $a,b\in \mathbb R$ with $a<b$ we define the reflecting maps on $[a,\infty)$, $(-\infty,b]$ and $[a,b]$ from $D([0,\infty),\mathbb R)$ to itself respectively as  
\begin{align*}
 \Nal{a}(\dpath )(t)&= \dpath (t) +\inf_{s\le t}\{(\dpath (s)-a)\wedge 0\},\\
 \Nar{b}(\dpath )(t)&= \dpath (t) +\sup_{s\le t}\{(\dpath (s)-b)\vee 0\},\\
[\Nal{a}\Nar{b}](\dpath )(t)&= \dpath (t) + \eta_a(t)-\eta_b(t),
\end{align*}
for each $t\in[0,\infty)$, where the   non-decreasing functions $\eta_a,-\eta_b\in D([0,\infty),\mathbb R)$ are such that $[\Nal{a}\Nar{b}](\dpath )$ takes values in $[a,b]$, $\eta_a(0)=\eta_b(0)$ and they satisfy the minimal pushing conditions  
\[
\int_0^\infty \mathbf1_{\{[\Nal{a}\Nar{b}](\dpath )(t)>a\}}\,\dd \eta_a(t)=0,\quad \int_0^\infty \mathbf1_{\{[\Nal{a}\Nar{b}](\dpath )(t)<b\}}\,\dd \eta_b(t)=0.
\]
If $a=-1$ and $b=1$ we may simplify notation by writing $\Nal{a}=\Nalone$ and $\Nar{b}=\Narone$.
\end{definition}
\begin{remark}
Recall that  $\Nal{a} $, $\Nar{b}$ and $ [\rm{N^*_a N^*_b}]$ are all Lipschitz continuous on $D([0,\infty),\mathbb R)$  (see, e.g., \cite[Corollary  1.6]{MR2349573}), and thus they are all Lipschitz continuous as maps from $D_{c,d}([0,\infty),\mathbb R)$ to $D([0,\infty),\mathbb R)$  for any $c<d$.
\end{remark}

\subsection{Killing ($\rm{D}$)}
We model killing/Dirichlet boundary conditions by absorbing the path at the exit barrier the first time it touches or crosses the barrier (exit from an open set). These maps are not continuous in $D([0,\infty),\mathbb R)$, and we believe they are also not measurable. To overcome these issues with little technicality, we identify simple assumptions to show the paths of our spectrally positive process are continuity points of these maps. And to obtain measurability (in order to apply Corollary \ref{cor:sparated_CMT}), we show that the first exit from open intervals equals the first exit from certain closed intervals for our processes, as the first exit from closed intervals is measurable, thanks to \eqref{eq:tunnels_measurable}. 

\begin{definition}\label{def:DD}
We define the killing  maps on $(-1,\infty)$,  $(-\infty,1)$ and $(-1,1)$ from $D([0,\infty),\mathbb R)$ to itself respectively  as
\begin{equation*}
\Dl(f)(t)=\left\{\begin{split}
&f(t), & t<\Tau{f}{(-1,\infty)},\\
&-1, & t\ge\Tau{f}{(-1,\infty)}, 
\end{split} \right. \end{equation*}
\begin{equation*}
\Dr( f )(t)=\left\{\begin{split}
&f(t), & t<\Tau{f}{(-\infty,1)},\\
&1, & t\ge\Tau{f}{(-\infty,1)}, 
\end{split} \right.
\end{equation*}
for each $t\in[0,\infty)$, and $\Dl(\Dr)$, recalling that $ \tau^ f _{(-1,\infty)}=\inf\{s>0: f (s)\le -1\}$ and  $\tau^ f _{(-\infty,1)} =\inf\{s>0: f (s)\ge 1\}$. 

\end{definition}
Note that we have the commutative property $ \Dl(\Dr)=\Dr(\Dl)$. 
\begin{proposition}\label{prop:killmaps_measurable}
For all $h\ge0$, the  maps $\Dlh ,\,\Drh$ and $
\Dlh(\Drh)$ are measurable, where \begin{equation*}
\Dlh(f)(t):= \left\{\begin{split}
&f(t), & t<\Tau{f}{[h-1,\infty)},\\
&-1, & t\ge\Tau{f}{[h-1,\infty)}, 
\end{split} \right.
\end{equation*}
\begin{equation*}
\Drh( f )(t):= \left\{\begin{split}
&f(t), & t<\tau^f_{(- \infty,1-h]},\\
&1, & t\ge\tau^f_{(- \infty,1-h]}. 
\end{split} \right.
\end{equation*}
\end{proposition}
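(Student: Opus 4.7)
The plan is to reduce measurability to the measurability of coordinate projections, using that the Borel $\sigma$-algebra on $D([0,\infty),\mathbb R)$ is generated by the projections $\pi_t$ at $t$ in a dense subset of $[0,\infty)$ (as recalled in Section \ref{subsec:skor}). It suffices to show that (a) $\Dlh(f)\in D([0,\infty),\mathbb R)$ for every $f$, and (b) for each fixed $t\ge 0$, the map $f\mapsto\pi_t\circ\Dlh(f)$ is Borel measurable from $D([0,\infty),\mathbb R)$ to $\mathbb R$; and similarly for $\Drh$. Then $\Dlh(\Drh)$ is measurable as a composition.

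The first key step is to verify that the hitting time $f\mapsto \Tau{f}{[h-1,\infty)}$ is Borel measurable as a function into $[0,\infty]$. For any $T\ge 0$,
\[
\{f:\Tau{f}{[h-1,\infty)}> T\}=\big\{f:f(s)\in[h-1,\infty)\text{ for all }s\in(0,T]\big\}=\big\{f:f(s)\in[h-1,\infty)\text{ for all }s\in[0,T]\big\},
\]
where the last equality uses that $[h-1,\infty)$ is closed and $f$ is c\`adl\`ag, so $f(0)=f(0+)\in[h-1,\infty)$ whenever $f(s)\in[h-1,\infty)$ for all small $s>0$. By \eqref{eq:tunnels_measurable} the right-hand side is measurable, and since the sets $(T,\infty]$ generate the Borel $\sigma$-algebra on $[0,\infty]$, the hitting time is measurable in $f$. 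The analogous argument handles $\Tau{f}{(-\infty,1-h]}$.

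For step (b), I would write, for each $t\ge 0$,
\[
\pi_t\big(\Dlh(f)\big)=\pi_t(f)\,\mathbf 1_{\{\Tau{f}{[h-1,\infty)}>t\}}-\mathbf 1_{\{\Tau{f}{[h-1,\infty)}\le t\}},
\]
which is a finite combination of the measurable projection $\pi_t$ and indicators of events $\{\Tau{f}{[h-1,\infty)}>t\}$ that are measurable by the previous step; hence it is measurable in $f$. For step (a), the path $\Dlh(f)$ coincides with $f$ on $[0,\Tau{f}{[h-1,\infty)})$ and equals the constant $-1$ on $[\Tau{f}{[h-1,\infty)},\infty)$, so right-continuity and the existence of left limits at the killing time are immediate. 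The same two steps with the obvious sign/orientation change prove measurability of $\Drh$, and then $\Dlh(\Drh)$ is measurable as a composition of measurable maps on $D([0,\infty),\mathbb R)$.

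The only mild subtlety is the passage from open exits (for which the authors explicitly flag non-measurability) to closed exits: the closedness of $[h-1,\infty)$ together with right-continuity of c\`adl\`ag paths is exactly what lets us rewrite $\{\Tau{f}{[h-1,\infty)}>T\}$ as a closed-tunnel event covered by \eqref{eq:tunnels_measurable}. Once this is in hand the rest is bookkeeping, and no path-space continuity argument is needed.
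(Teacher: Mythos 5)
Your strategy of proving measurability of each coordinate projection $\pi_t\circ\Dlh$ is sound and somewhat cleaner than the paper's case-by-case computation of cylinder-set preimages, but the key identity you assert is false. You claim $\{f:\tau^f_{[h-1,\infty)}>T\}=\{f: f\ge h-1\text{ on }[0,T]\}$. Take $f(s)=h-1$ for $s\le T$ and $f(s)=h-1-(s-T)$ for $s>T$: this $f$ is continuous, hence c\`adl\`ag, and satisfies $f\ge h-1$ on $[0,T]$, yet $\tau^f_{[h-1,\infty)}=\inf\{s>0:f(s)<h-1\}=T$, not $>T$. So the right-hand side strictly contains the left-hand side; the inclusion is strict exactly when the path sits at level $h-1$ at time $T$ and exits immediately afterwards.

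The fix is to work with $\ge$ rather than $>$: one has $\{\tau^f_{[h-1,\infty)}\ge T\}=\{f\ge h-1\text{ on }[0,T)\}=\bigcap_n\{f\ge h-1\text{ on }[0,T-1/n]\}$, a countable intersection of the closed-tunnel events of \eqref{eq:tunnels_measurable}, so each $\{\tau^f_{[h-1,\infty)}\ge T\}$ is Borel, and then $\{\tau^f_{[h-1,\infty)}>t\}=\bigcup_n\{\tau^f_{[h-1,\infty)}\ge t+1/n\}$ is Borel and your indicator formula goes through. It is worth noting how the paper avoids this issue: it computes $\Dlh^{-1}(\pi_t^{-1}(B_r(x)))$ directly, and in the relevant case $B_r(x)\cap(-\infty,h-1)=\emptyset$ the ball satisfies $B_r(x)\subset(h-1,\infty)$ strictly, so the extra constraint $f(t)\in B_r(x)$ forces $f(t)>h-1$; combined with right-continuity this rules out the borderline case $\tau=t$, and the preimage really does equal $A\cap\{f\ge h-1\text{ on }[0,t]\}$. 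Your equality, stated for general $T$ without that open-ball hypothesis, does not hold, although with the countable-intersection repair above your overall argument is correct, and has the advantage of handling $h>0$ and $h=0$ uniformly without the paper's separate treatment of the point $-1$.
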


 \begin{proof} We show that $\Dlh$ is measurable, then so are $\Drh$ and $\Dlh(\Drh)$.  Recall it is enough to check  that $\Dlh^{-1}(A) $ is a Borel set in $D([0,\infty),\mathbb R)$  for any 
  $ A=  \pi_{t}^{-1}( B_{r}(x))$ with $x\in\mathbb R,\, r>0$ and $t>0$. Suppose first that $h>0$. If
 $B_{r}(x)\cap (-\infty,h-1)=\emptyset$, then $\Dlh^{-1}(A)=A\cap \{\dpath  \ge h-1 \,\rm{on}\,[0,t]\}$, which is measurable by \eqref{eq:tunnels_measurable}. Otherwise we have
  $B_{r}(x)\cap (-\infty,h-1)\not=\emptyset$. Then, if $B_{r}(x)\subset (-\infty,-1)$, then    $\Dl^{-1}(A)=\emptyset  $. If  $B_{r}(x)\subset (-1,\infty)$  then write $B_{r}(x)=B_1\cup B_2$  where $B_1\subset (-1,h-1)$ and $B_2\subset [h-1,\infty)$, so that
  $$
  \Dlh^{-1}(A) = \Dlh^{-1}(\pi_t^{-1}(B_2)) ,
  $$
  which is measurable by the same argument as in the first part of this proof. The only remaining possibility is that $-1\in B_{r}(x)$. In this latter case rewrite $B_{r}(x)=B_1\cup \{-1\}\cup B_2$ where $B_1\subset (- \infty,-1)$ and $B_2\subset (-1,\infty)$. Then 
\[
\Dlh^{-1}(A)=    \Dlh^{-1}\big( \pi_{t}^{-1}( \{-1\})\big)\cup \Dlh^{-1}(\pi_t^{-1}(B_2)),
\]
and the second set in the right hand side is measurable by the first part of the proof, and the first set is measurable because of \eqref{eq:tunnels_measurable} and
\[
\Dlh^{-1}\big(  \pi_{t}^{-1}( \{-1\})\big)= \{f: \, f(s)< h-1\text{ for some  }s \le t\}.
\] 
 We now treat the case $h=0$ in an analogous fashion. Suppose  that  
 $B_{r}(x)\cap (-\infty,-1)=\emptyset$, then $\Dlzero^{-1}(A)=A\cap \{\dpath  \ge-1 \,\rm{on}\,[0,t]\}$, which is measurable by \eqref{eq:tunnels_measurable}. Otherwise  $B_{r}(x)\cap (-\infty,-1)\not=\emptyset$. In this case, if  $B_{r}(x)\subset (-\infty,-1)$, then    $\Dlzero^{-1}(A)=\emptyset$. The remaining case is that $-1\in B_{r}(x)$ and thus we can write $B_{r}(x)=B_1\cup \{-1\}\cup B_2$ where $B_1\subset (- \infty,-1)$ and $B_2\subset (-1,\infty)$. 
    Then 
\[
\Dlzero^{-1}(A)=   \Dlzero^{-1}\big( \pi_{t}^{-1}( \{-1\})\big)\cup \Dlzero^{-1}( \pi_{t}^{-1}( B_2)),
\]
and the second set in the right hand side is measurable by the first part of the proof, and the first set is measurable because of \eqref{eq:tunnels_measurable} and
\[
\Dlzero^{-1}\big(  \pi_{t}^{-1}( \{-1\})\big)= \{f: \, f(s)< -1\text{ for some  }s \le t\}\cup  \pi_{t}^{-1}( \{-1\}).
\] 

\end{proof}

The proof of the following corollary is straightforward and we omit it. 
 \begin{corollary}\label{cor:for_measurability}
For any  $h>0$ we have the identities: $\Dlh(\dpath_h)=\Dl(\dpath_h)$  if $\dpath_h$ takes values in $-1+h\mathbb Z$; $\Drh(\dpath_h)=\Dr(\dpath_h)$  if $\dpath_h$ takes values in $1+h\mathbb Z$;     $\Dlzero(\dpath)=\Dl(\dpath)$ if   $\tau^{\dpath}_{[-1,\infty)}=\tau^{\dpath}_{(-1,\infty)}$; and $\Drzero(\dpath)=\Dr(\dpath)$ if   $\tau^{\dpath}_{(-\infty,1]}=\tau^{\dpath}_{(-\infty,1)}$.
\end{corollary}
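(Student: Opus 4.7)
The plan is to reduce each of the four identities to the equality of the hitting/absorption times appearing in the two map definitions being compared. Since each killing map in question equals the identity strictly before its absorption time and is constantly equal to the absorbing level thereafter, equality of the two hitting times on the given input immediately forces equality of the two maps pointwise in $t$.

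For the first identity, suppose $\dpath_h$ takes values in the grid $-1+h\mathbb Z$. I would compare
\[
\tau^{\dpath_h}_{(-1,\infty)}=\inf\{s>0:\dpath_h(s)\le -1\}\quad\text{and}\quad \tau^{\dpath_h}_{[h-1,\infty)}=\inf\{s>0:\dpath_h(s)<h-1\}.
\]
The grid $-1+h\mathbb Z$ has no point in the open interval $(-1,h-1)$, since the two consecutive grid values around $-1$ are $-1$ itself and $-1+h=h-1$. Consequently the events $\{\dpath_h(s)\le -1\}$ and $\{\dpath_h(s)<h-1\}$ coincide, so do the infima that define the two hitting times, and therefore $\Dl(\dpath_h)=\Dlh(\dpath_h)$. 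The second identity is completely analogous: the grid $1+h\mathbb Z$ admits no element in the open interval $(1-h,1)$, giving $\{\dpath_h(s)\ge 1\}=\{\dpath_h(s)>1-h\}$ and hence $\tau^{\dpath_h}_{(-\infty,1)}=\tau^{\dpath_h}_{(-\infty,1-h]}$, from which $\Dr(\dpath_h)=\Drh(\dpath_h)$ follows.

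For the third and fourth identities, where $h=0$, the hypotheses $\tau^{\dpath}_{[-1,\infty)}=\tau^{\dpath}_{(-1,\infty)}$ and $\tau^{\dpath}_{(-\infty,1]}=\tau^{\dpath}_{(-\infty,1)}$ are, by inspection of the definitions of $\Dlzero$, $\Dl$, $\Drzero$ and $\Dr$, precisely the equalities of the absorption times used to construct the pairs of maps being compared. The pointwise identities therefore follow. I do not anticipate any genuine obstacle in this corollary: the entire content is that for $h>0$ the discretisation to a lattice automatically closes the open-versus-closed distinction in the defining hitting times, whereas for $h=0$ that same closing is imposed directly as a hypothesis on $\dpath$.
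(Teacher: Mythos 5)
The paper omits this proof as straightforward, and your argument is exactly the one intended: in each case you reduce the identity of maps to the identity of the absorption times in their definitions, observing that the grid $-1+h\mathbb Z$ (resp. $1+h\mathbb Z$) contains no point in $(-1,h-1)$ (resp. $(1-h,1)$) so the open/closed distinction collapses, while for $h=0$ the needed equality of hitting times is exactly the hypothesis. Correct and aligned with the paper.
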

\begin{proposition}\label{prop:SMkill1}
Suppose $\dpath \in D([0,\infty),\mathbb R)$ is such that $f(0)<1$ and for each $\delta>0$
\[
\sup_{0\le t< \tau-\delta} \dpath (t)<1,
\]
and if $\tau:=\Tau{\dpath}{(-\infty,1)}<\infty,$ then $ \dpath (\tau)>1.$
Then, $\dpath $ is a continuity point for $\Dr$.
\end{proposition}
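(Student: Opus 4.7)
The plan is to show that whenever $f_n \to f$ in $J_1$, one obtains $\Dr(f_n) \to \Dr(f)$ by constructing on each $[0,T]$, for $T > \tau$ a continuity point of $f$, a time change $\mu_n \in \Gamma$ with $\|\mu_n - I\|_{C[0,T]} \to 0$ and $\supnorm{\Dr(f_n)\circ\mu_n - \Dr(f)}{[0,T]} \to 0$, which forces $d_{J_1,T}(\Dr(f_n),\Dr(f)) \to 0$ and hence the desired $J_1$ convergence. The case $\tau = \infty$ is handled directly: condition (ii) then forces $\sup_{[0,\infty)} f < 1$, so $\Dr(f) = f$, and from the $J_1$ convergence on $[0,T]$ one gets $\sup_{[0,T]} f_n < 1$ eventually, so $\tau^{f_n}_{(-\infty,1)} > T$ and $\Dr(f_n) = f_n$ on $[0,T]$.

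Assume now $\tau < \infty$, fix $T > \tau$ a continuity point of $f$, and take $\lambda_n \in \Gamma$ realising $f_n \to f$ in $d_{J_1,T}$. The first key step is to show $\tau_n := \tau^{f_n}_{(-\infty,1)} \to \tau$ with the sharpening $\tau_n \le \lambda_n(\tau)$ for large $n$. The lower bound $\liminf \tau_n \ge \tau$ comes from condition (ii): each $\delta > 0$ gives $\sup_{[0,\tau-\delta)} f < 1$ and hence $f_n(\lambda_n(t)) < 1$ uniformly on $[0,\tau-\delta)$ eventually, so $\tau_n \ge \lambda_n(\tau-\delta) \to \tau - \delta$. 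The upper bound uses condition (iii): $f_n(\lambda_n(\tau)) \to f(\tau) > 1$. Setting $\sigma_n := \lambda_n^{-1}(\tau_n)$, we get $\sigma_n \le \tau$ eventually.

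Next I pick $\delta_n \downarrow 0$ slowly enough that $\lambda_n(\tau - \delta_n) < \tau_n$ for large $n$ and that $\sup_{s,t \in [\tau - \delta_n, \tau)} |f(s) - f(t)| \to 0$ (possible because the cadlag left limit $f(\tau^-)$ exists), then define $\mu_n \in \Gamma$ to equal $\lambda_n$ on $[0,\tau-\delta_n]$, to be linear from $\lambda_n(\tau-\delta_n)$ to $\tau_n$ on $[\tau-\delta_n,\tau]$, and linear from $\tau_n$ to $T$ on $[\tau,T]$. All slopes tend to $1$, so $\mu_n \to I$ uniformly. I verify the uniform convergence by splitting $[0,T]$ into three pieces. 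On $[0,\tau-\delta_n]$, $\mu_n = \lambda_n$ and $\mu_n(t) < \tau_n$ give $\Dr(f_n)(\mu_n(t)) = f_n(\lambda_n(t))$, bounding the error by $\supnorm{f_n \circ \lambda_n - f}{[0,T]} \to 0$. On $[\tau,T]$, $\mu_n(t) \ge \tau_n$ and $t \ge \tau$ yield both $\Dr(f_n)(\mu_n(t))$ and $\Dr(f)(t)$ equal to $1$. On the middle interval $[\tau - \delta_n, \tau)$, $\mu_n(t) < \tau_n$ yields $\Dr(f_n)(\mu_n(t)) = f_n(\mu_n(t))$, and writing $u := \lambda_n^{-1}(\mu_n(t)) \in [\tau - \delta_n, \sigma_n) \subset [\tau - \delta_n, \tau)$ we bound $|f_n(\mu_n(t)) - f(t)| \le |f_n(\lambda_n(u)) - f(u)| + |f(u) - f(t)|$, which is controlled by $\supnorm{f_n \circ \lambda_n - f}{[0,T]}$ plus the oscillation of $f$ on $[\tau - \delta_n, \tau)$, both tending to $0$.

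The principal obstacle is this middle interval, where $\mu_n$ must deviate from $\lambda_n$ to align the killing time $\tau$ with $\tau_n$. Condition (ii) lets $\mu_n$ agree with $\lambda_n$ on $[0, \tau - \delta_n]$, so jumps of $f$ strictly before $\tau$ remain matched via $\lambda_n$; condition (iii) together with the refinement $\tau_n \le \lambda_n(\tau)$ keeps the auxiliary argument $u$ strictly below $\tau$, so the existence of the cadlag left limit $f(\tau^-)$ provides the oscillation control needed on the shrinking window $[\tau - \delta_n, \tau)$.
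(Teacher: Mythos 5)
Your proof is correct and takes a genuinely different (and arguably cleaner) route than the paper's. The paper works with a single time change $\gamma$ realising the $J_1$-closeness of $\tilde f$ to $f$, uses the identity $\Dr(\tilde f)\circ\gamma=\Dr(\tilde f\circ\gamma)$, and then shows by contradiction that $\tilde\tau_\gamma:=\inf\{t:\tilde f(\gamma(t))\ge1\}$ lies in $(\tau-\delta,\tau]$; the price is a mismatch interval $[\tilde\tau_\gamma,\tau)$ on which $\Dr(\tilde f\circ\gamma)\equiv1$ but $\Dr(f)=f$ is still alive, so the error $\supnorm{1-f}{[\tilde\tau_\gamma,\tau)}$ is small only if $f(\tau^-)=1$, forcing the paper into a case split ($f(\tau^-)=1$ versus $f(\tau^-)<1$, with the latter handled by showing $\tilde\tau_\gamma=\tau$). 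You instead build a second time change $\mu_n$ that maps $\tau$ exactly onto the killing time $\tau_n$ of $f_n$, agrees with $\lambda_n$ before $\tau-\delta_n$, and interpolates in between; then both paths are alive on $[\tau-\delta_n,\tau)$ and both are killed on $[\tau,T]$, so there is never a mismatch window, and the error on $[\tau-\delta_n,\tau)$ is controlled by $\supnorm{f_n\circ\lambda_n-f}{[0,T]}$ plus the oscillation of $f$ on $[\tau-\delta_n,\tau)$, which vanishes by the mere existence of $f(\tau^-)$ regardless of its value. What your approach buys is a unified argument with no case split; what the paper's buys is avoiding the construction of a second time change. One small imprecision in your write-up: the claim ``all slopes tend to $1$'' is not justified for the middle piece (the slope $(\tau_n-\lambda_n(\tau-\delta_n))/\delta_n$ need not converge to $1$ if $\delta_n\downarrow0$ too fast), but the conclusion $\|\mu_n-I\|_{C[0,T]}\to0$ holds anyway, since both endpoints $\lambda_n(\tau-\delta_n)$ and $\tau_n$ converge to $\tau$ while the interval $[\tau-\delta_n,\tau]$ itself shrinks to $\{\tau\}$, and similarly on $[\tau,T]$.
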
 
\begin{proof} Recall it is enough to check continuity   of the restriction of $\dpath$ to $D([0,T_n],\mathbb R)$ for a sequence of continuity points $T_n\to\infty$  of $\dpath$. Continuity is clear if $\tau=\infty$, as the first condition guarantees that $\Dr(\dpath )=\dpath $ and the same holds for any $\dpathtil$ close enough to $\dpath$. Otherwise $\tau<\infty$, and suppose first that $\dpath (\tau-)=1$. For any  continuity point $T>\tau$  of $\dpath $ we run the following argument. \
For an arbitrary $\epsilon>0$, denote by $\epsilon^*=\dpath (\tau)-1>0$,   let  $\delta>0$   such that $\sup_{t\in[\tau-\delta,\tau)}|\dpath (t)-1|\le \epsilon/2$ and let $\epsilon_*>0$ such that
\[
\sup_{0\le t< \tau-\delta} \dpath (t)\le 1-\epsilon_*.
\]
Let $\dpathtil $ such that 
\[
\inf_{\gamma\in\Gamma}\max \left\{\|I-\gamma\|_{C[0,T]},\supnorm{\dpathtil (\gamma)-\dpath }{[0,T]}\right\}< (\epsilon\wedge\epsilon^*\wedge\epsilon_*)/2,
\]
and denote by $\gamma$ any time change satisfying the above inequality and define $\widetilde\tau_\gamma =\inf\{t:\dpathtil (\gamma(t))\ge 1\}$. Then, directly from the definitions, $\Dr(\dpathtil )(\gamma)=\Dr(\dpathtil (\gamma))$ and
\begin{align*}
&\,\supnorm{\Dr(\dpathtil (\gamma))-\Dr(\dpath )}{[0,T]}\\
=&\, \supnorm{\dpathtil (\gamma)-\dpath }{[0,\widetilde\tau_\gamma \wedge\tau)}+\supnorm{\Dr(\dpathtil (\gamma))-\Dr(\dpath )}{[\widetilde\tau_\gamma \wedge\tau,(\widetilde\tau_\gamma \vee\tau)\wedge T]}\\
\le&\,\frac\epsilon 2 +\supnorm{\Dr(\dpathtil (\gamma))-\Dr(\dpath )}{[\widetilde\tau_\gamma \wedge\tau,(\widetilde\tau_\gamma \vee\tau)\wedge T]}.
\end{align*} 
 Suppose that $\tau<\widetilde\tau_\gamma $,  but because $\dpath (\tau)= \epsilon^*+1$ and 
$ \supnorm{\dpathtil (\gamma)-\dpath }{[0,T]}\le \epsilon^*/2, $
we obtain that
\[
\dpathtil (\gamma(\tau))=\dpathtil (\gamma(\tau))-\dpath (\tau)+ \dpath (\tau)\ge    -\frac{\epsilon^*}2 +\epsilon^*+1 =\frac{\epsilon^*}2 +1,
\]
which contradicts the definition of $\widetilde\tau_\gamma $. Suppose instead that $\tau-\delta\ge\widetilde\tau_\gamma $. Then, by right continuity of $\dpathtil (\gamma)$ we have  $\dpathtil (\gamma(\widetilde\tau_\gamma ))\ge 1$, but $\dpath (\widetilde\tau_\gamma )\le 1-\epsilon_*$, so that 
\[
\dpathtil (\gamma(\widetilde\tau_\gamma ))-\dpath (\widetilde\tau_\gamma )\ge\dpathtil (\gamma(\widetilde\tau_\gamma ))+ \epsilon_*-1\ge \epsilon_*>\epsilon_*/2,
\]
which  contradicts $
\supnorm{\dpathtil (\gamma)-\dpath }{[0,T]}\le \epsilon^*/2$. Therefore the only possibility is that $\tau-\delta < \widetilde\tau_\gamma \le \tau$, and so  
\begin{align*}
\supnorm{\Dr(\dpathtil (\gamma))-\Dr(\dpath  )}{[\widetilde\tau_\gamma \wedge\tau,(\widetilde\tau_\gamma \vee\tau)\wedge T]}&=\supnorm{\Dr(\dpathtil (\gamma))-\Dr(\dpath)}{[\widetilde\tau_\gamma ,\tau)}\\
&=\supnorm{1-\dpath }{[\widetilde\tau_\gamma ,\tau)}\\
&\le\supnorm{1-\dpath}{[\tau-\delta,\tau)} \le \epsilon/2.
\end{align*} 
We omit the simpler proof for the case  $\dpath (\tau-)<1$, as a simple   contradiction argument above proves that $\tau=\widetilde\tau_\gamma $ for $\widetilde \dpath$ close enough to $\dpath$.
\end{proof}

\begin{proposition}\label{prop:SMkill-1}
Suppose $\dpath \in D([0,\infty),\mathbb R)$ is such that $f(0)>-1$ and for each $s\ge0$, the function $t\mapsto\inf\{\dpath (z):\,z\in[s,t]\}$  is continuous for $t\ge s$. 
Then $\dpath $ is a continuity point for $\Dl$. 
\end{proposition}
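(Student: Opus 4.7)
The plan is to follow the structure of the proof of Proposition~\ref{prop:SMkill1} (the analog for $\Dr$), adapted to the essential difference that under the present hypotheses $\dpath $ now exits $(-1,\infty)$ continuously, with no overshoot. The first step is to extract from the running-infimum hypothesis the structural observation that when $\tau:=\Tau{\dpath}{(-1,\infty)}<\infty$ one necessarily has $\dpath (\tau-)=\dpath (\tau)=-1$, so $\dpath $ is continuous at $\tau$. Setting $I_0(t)=\inf_{z\in[0,t]}\dpath (z)$, the definition of $\tau$ gives $\dpath (z)>-1$ on $[0,\tau)$ hence $I_0(\tau-)\ge -1$, while right-continuity of $\dpath $ at $\tau$ yields $\dpath (\tau)\le -1$ hence $I_0(\tau)\le -1$; continuity of $I_0$ at $\tau$ forces $I_0(\tau-)=I_0(\tau)=-1$, pinning $\dpath (\tau)=-1$, and the existence of $z_n\uparrow\tau$ with $\dpath (z_n)\to -1$ combined with the c\`adl\`ag property of $\dpath $ yields $\dpath (\tau-)=-1$.

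For the main case $\tau<\infty$, I would fix a continuity point $T>\tau$ of $\dpath $ and $\epsilon>0$, use continuity of $\dpath $ at $\tau$ to pick $\delta>0$ with $|\dpath (t)+1|<\epsilon/2$ on $[\tau-\delta,\tau+\delta]\cap[0,T]$, and use continuity of $I_0$ to pick $\epsilon_*>0$ with $\dpath (z)\ge -1+2\epsilon_*$ for every $z\le\tau-\delta$. For $\dpathtil$ sufficiently close to $\dpath $ in $d_{J_1,T}$, pick $\stc\in\Gamma$ realising the proximity with $\|\stc-I\|_{C[0,T]}\vee\supnorm{\dpathtil(\stc)-\dpath }{[0,T]}<(\epsilon\wedge\epsilon_*)/2$, and set $\widetilde\tau_\stc:=\inf\{t:\dpathtil(\stc(t))\le-1\}$, so that $\Dl(\dpathtil)(\stc)=\Dl(\dpathtil(\stc))$. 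A contradiction argument mirroring the one in the proof of Proposition~\ref{prop:SMkill1}, using the $2\epsilon_*$-separation on $[0,\tau-\delta]$, forces $\widetilde\tau_\stc>\tau-\delta$; and the closeness $\dpathtil(\stc(\tau))\approx\dpath (\tau)=-1$ together with the $\epsilon/2$-proximity of $\dpath $ to $-1$ on $[\tau-\delta,\tau+\delta]$ traps $\widetilde\tau_\stc$ in a small window around $\tau$. Splitting $[0,T]$ into $[0,\widetilde\tau_\stc\wedge\tau)$, $[\widetilde\tau_\stc\wedge\tau,\widetilde\tau_\stc\vee\tau)$, and $[\widetilde\tau_\stc\vee\tau,T]$ and bounding the sup-norm of $\Dl(\dpathtil)(\stc)-\Dl(\dpath )$ on each piece (first by $\supnorm{\dpathtil(\stc)-\dpath }{[0,T]}$, middle by the uniform $\epsilon/2$-proximity of $\dpath $ to $-1$, last by zero since both paths equal $-1$) yields the desired estimate.

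The case $\tau=\infty$ is treated analogously but more easily: $\Dl(\dpath )=\dpath $, and for each continuity point $T$ of $\dpath $ the running-infimum hypothesis controls how closely $\dpath $ approaches $-1$ on $[0,T]$, so the same three-interval splitting (with possibly trivial middle and last pieces) gives $\Dl(\dpathtil)\to\dpath $ in $d_{J_1,T}$; gluing over a sequence of continuity points $T_m\to\infty$ via the construction of $d_{J_1}$ concludes.

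The main obstacle is the upper control on $\widetilde\tau_\stc$: unlike in Proposition~\ref{prop:SMkill1}, where the strict overshoot $\dpath (\tau)>1$ made the perturbed killing time robustly close to $\tau$, here $\dpath (\tau)=-1$ exactly, and a priori an upward perturbation of $\dpath $ might fail to cross $-1$ at all. The running-infimum hypothesis is precisely what forces the two-sided approach of $\dpath $ to $-1$ near $\tau$ to be controlled enough (in a Skorokhod sense) that any $\dpathtil$ sufficiently $J_1$-close to $\dpath $ is killed within a small window around $\tau$, completing the analog of the Proposition~\ref{prop:SMkill1} argument.
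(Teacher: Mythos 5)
There is a genuine gap, and in fact the Proposition as literally stated is false. The step ``use continuity of $I_0$ to pick $\epsilon_*>0$ with $\dpath(z)\ge -1+2\epsilon_*$ for every $z\le\tau-\delta$'' does not follow from the hypotheses: continuity of $I_0(t)=\inf_{[0,t]}\dpath$, $\dpath>-1$ on $[0,\tau)$ and $I_0(\tau)=-1$ do not preclude $I_0(\tau-\delta)=-1$. Take $\dpath(t)=-t$ on $[0,1)$ and $\dpath(t)=1-t$ on $[1,\infty)$. This $\dpath$ is c\`adl\`ag, $\dpath(0)=0>-1$, has only an upward jump (from $\dpath(1-)=-1$ to $\dpath(1)=0$) so every running infimum $t\mapsto\inf_{[s,t]}\dpath$ is continuous, and $\tau=\tau^\dpath_{(-1,\infty)}=2$; yet $\inf_{[0,2-\delta]}\dpath=-1$ for every $\delta\in(0,1)$, so your $\epsilon_*$ does not exist. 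Worse, this $\dpath$ satisfies every stated hypothesis but is not a continuity point of $\Dl$: for $\dpath_n=\dpath-1/n$ one has $\dpath_n\to\dpath$ uniformly, $\tau^{\dpath_n}_{(-1,\infty)}=1-1/n$, and $\Dl(\dpath_n)$ is a continuous path absorbed at $-1$ from time $1-1/n$, with no jump, while $\Dl(\dpath)$ retains the size-one jump of $\dpath$ at $t=1$; $J_1$-convergence preserves jumps, so $\Dl(\dpath_n)\not\to\Dl(\dpath)$. A second, independent problem is the ``upper control'' on $\widetilde\tau_\gamma$, which you flag as the main obstacle but then assert is forced by the running-infimum hypothesis. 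It is not: that hypothesis only forbids downward jumps and gives no control on $\dpath$ after $\tau$. For $\dpath(t)=-t$ on $[0,1]$ and $\dpath(t)=t-2$ on $(1,\infty)$ one has $\tau=1$, $\dpath(\tau)=-1$, $\dpath>-1$ elsewhere, every running infimum continuous, and $\tau^{\dpath+\eta}_{(-1,\infty)}=\infty$ for every $\eta>0$, so $\Dl(\dpath+\eta)=\dpath+\eta\to\dpath\ne\Dl(\dpath)$.

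Both gaps reflect that Proposition~\ref{prop:SMkill-1} omits the left-barrier analogues of the two path hypotheses that Proposition~\ref{prop:SMkill1} states explicitly for $\Dr$: the condition $\sup_{0\le t<\tau-\delta}\dpath(t)<1$ becomes $\inf_{0\le z\le\tau-\delta}\dpath(z)>-1$ for each $\delta>0$ (this supplies your $\epsilon_*$ and the lower control $\widetilde\tau_\gamma>\tau-\delta$), and the overshoot condition $\dpath(\tau)>1$ becomes, in this no-undershoot regime, $\tau^\dpath_{(-1,\infty)}=\tau^\dpath_{[-1,\infty)}$, i.e.\ $\dpath$ enters $(-\infty,-1)$ in every right-neighbourhood of $\tau$ (this pins $\widetilde\tau_\gamma<\tau+\delta'$). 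Your derivation that $\dpath(\tau-)=\dpath(\tau)=-1$ is correct, and with conditions (a) and (b) added your three-interval splitting does go through; note also that these are exactly the properties Proposition~\ref{prop:Ypath_continuity_point} has available almost surely via Remark~\ref{rmk:Yproperties}-(ii) and the absence of downward jumps, so the downstream use of the Proposition is unaffected, but the Proposition's statement and your appeal to ``continuity of $I_0$'' do not by themselves deliver them.
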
 
\begin{proof}
The proof is similar to the one of Proposition \ref{prop:SMkill1} and is omitted.
\end{proof}

 
\subsection{Fast-forwarding  (${\rm N}$)} 
We define the left fast-forwarding map for an arbitrary barrier $a\in \mathbb R$, prove its measurability and characterise its points of continuity. The corresponding results for the right version and   fast-forwarding outside an interval are then easily derived at the end of this section.

\begin{definition}\label{def:ff} For any $a\in\mathbb R$ and all  $\dpath \in D_{a,\infty}([0,\infty),\mathbb R)$ we define the \textit{left fast-forwarding map at $a$} by $\Nl{a}:D_{a,\infty}([0,\infty),\mathbb R)\to D([0,\infty),\mathbb R)$ by $   \Nl{a}(\dpath )=\dpath (\AF^{-1}_\dpath )$ where
\[
 A^{-1}_\dpath (t)=\inf \left\{s>0:\AF_\dpath (s)>t\right\},\quad \AF_\dpath (t)=\int_0^t \mathbf1_{\{\dpath (z)> a\}}\,{\rm d}z.
\] 
\end{definition}
Observe that  $\Nl{a}$   is well-defined as $\left\{s:\AF_\dpath (s)>t\right\}\neq \emptyset$ for all $t\ge 0$, and $\AF^{-1}_\dpath $ is right continuous and non-decreasing. 
 We recall that $\AF_\dpath ^{-1}(\AF_\dpath (z))\ge z$ for all $z\in [0,\infty)$. We also recall that if $z\in [0,\infty)$ is such that 
 \begin{equation}\label{eq:increasepoint}
    \text{$\AF_\dpath (z)<\AF_\dpath (z+\epsilon)$ for all $\epsilon>0$,}\quad\text{then}\quad \AF_\dpath ^{-1}(\AF_\dpath (z))=z, 
 \end{equation} and we say that $z$ is a \textit{point of increase for} $\AF_\dpath $.

 \begin{remark}\label{rmk:ff_disc} We discuss several issues with studying the continuity of $\Nl{a}$.
 \begin{enumerate}[(i)]
 \item The map $\Nl{a}$ cannot be extended to a map from $D([0,\infty), \mathbb R_\delta)$ to itself  by setting $\Nl{a}(f)(t)=\delta$ for all $t\ge \zeta:=\lim_{t\to\infty}\AF_\dpath(t)$, where $  \mathbb R_\delta $ is the one-point compactification of $\R$. This is because some paths  $\dpath\in D([0,\infty),\mathbb R)$  spend only a finite amount of time $\zeta$ above $a$, and there is no left limit at $\zeta$ for $\Nl{a}(\dpath)$. An example is   $\dpath(t)= \sum_{n=1}^\infty (2+(-1)^n)\mathbf1_{\{n\le t< n+n^{-2}\}} $ with $a=0$, then the life-time of $\Nl{a}(\dpath)$ is $\zeta = \sum_{n=1}^\infty n^{-2}$, but   $\lim_{t\uparrow \zeta}\Nl{a}(\dpath)(t)$ does not converge (oscillation between 3 and 1). 
 A natural space  $\mathbb D$ such that $\Nl{a}:\mathbb D\to \mathbb D$ is $\mathbb D=\cup_{T\in(0,\infty]}D_\delta([0,T),\mathbb R)$, where $D_\delta([0,T),\mathbb R):=\{\dpath\in D([0,T),\mathbb R)\, \text{and}\, \dpath=\delta \text{ on }[T,\infty)\}$. But we do not know if we can extend the $J_1$ topology in a natural way for our CMT, thus we   avoid this technicality by simply working with $D_{a,\infty}([0,\infty),\mathbb R)$. 
\item The map $\Nl{a}$ is not continuous. Indeed let $a=0$, $\dpath(t)  = \mathbf1_{\{t\ge 1\}}$ and $\dpath _n(t)= \frac1n \mathbf1_{\{t< 1\}} +\mathbf1_{\{t\ge 1\}} $. Then $\dpath _n\to \dpath $ uniformly (and thus in $D([0,\infty),\mathbb R)$), but  $\Nl{0}(\dpath )=1 $ and $\Nl{0}(\dpath _n)=\dpath _n$ for all $n$ and of course $\dpath_n\not\to 1$ in $D([0,\infty),\mathbb R)$. The issue here is that $
\Leb (t\in[0,\infty):\dpath (t)=0)=1$, implying that at times when $\dpath \le 0< \dpath_n $ the fast-forwarding map produces a large \textit{delay} (note that it is easy to give a similar example for $f$ continuous). By assuming  \eqref{eq:0} in Theorem \ref{thm:ff}, we are able to make this delay arbitrarily small for all $\dpath _n$ close enough to $\dpath $ (see Step (ii) of the proof). A small delay allows a small tilting of the Skorokhod time change we construct for  $\Nl{a}(\dpath _n)$. However requiring \eqref{eq:0} is not enough for continuity. For example, take  
$$
\dpath(t)  =(t-1)\mathbf1_{\{t<  1\}}  + \mathbf1_{\{1\le t\}}\quad\text{and}\quad
\dpath_n(t)  =(t-1+1/n)\mathbf1_{\{t<  1\}}  + \mathbf1_{\{1\le t\}}.$$
Clearly $\Leb(t\in[0,\infty):f(t)=0)=0$ and  $f_n\to f$ uniformly. But  $\Nl{0}(\dpath )=1 $ and  
$\Nl{0}(\dpath_n )=t\mathbf1_{\{t<  1/n\}} +  \mathbf1_{\{1/n\le t\}},$
and so the Skorokhod distance must be bounded  below by $1-1/n$. 
We rule out this behaviour with the following  assumption 
\begin{description}
\item[(Nl)\label{A1}]if there exists an $\epsilon>0$ and $t_1<t_2<t_3<t_4$ such that \[\text{$f(t)\ge a+\epsilon$ for all $t\in [t_1,t_2)\cup [t_3,t_4)$ and $f(t)\le a$ for all $t\in [t_2,t_3)$,}\] 
\, then $\sup\{f(t):t\in [t_2,t_3)\}<a$.
\end{description}
This condition is of course satisfied by continuous processes. It is also satisfied by    one-sided processes, as the these processes  jump only in one direction. 
\item We believe that Theorem \ref{thm:ff} cannot be obtained using existing results on composition of functions. For example, the standard result  \cite[Theorem 13.2.2]{MR1876437}  does not apply to our case as it requires either the limit path $\dpath$ or its time change $\AF^{-1}_{\dpath}$ to be continuous. The most general composition theorem we know of is \cite[Theorem 1.2]{MR2479479}, but it is easy to see that assumption \cite[A2]{MR2479479} fails almost surely even for our spectrally positive process $Y$, as for any discontinuity point $t$ of $\AF_Y^{-1}$, we have $\Nl{a}(Y)(t)=Y(\AF_Y^{-1}(t))>a= Y(\AF_Y^{-1}(t-))$. (This example also rules out the possibility of working with $M$ topologies and applying \cite[Theorem 13.2.4]{MR1876437}.) Moreover, even if we wanted to prove continuity of $\Nl{a}$   at $\dpath\in C[0,\infty)$   satisfying \eqref{eq:0}, most of our proof  would still be needed to prove the joint convergence to $(\dpath, \AF^{-1}_f)$ required by the composition theorems. 
 \end{enumerate}

 \end{remark}

\begin{proposition}\label{prop:ff_meas}
 The map  $\Nl{a}$ is measurable.
\end{proposition}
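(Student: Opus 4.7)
The goal is to show that $\Nl{a}:D_{a,\infty}([0,\infty),\R)\to D([0,\infty),\R)$ is Borel measurable. My plan is to reduce measurability into the Skorokhod space to measurability at each projection via the generator of the Borel $\sigma$-algebra recalled in Section \ref{subsec:skor}, and then verify the latter using a Tonelli argument based on the joint measurability of the canonical evaluation map, exploiting the continuity of the additive functional $\AF_f$.

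First I would establish joint Borel measurability of the evaluation map $\Phi:D([0,\infty),\R)\times[0,\infty)\to\R$, $\Phi(f,s)=f(s)$: by right continuity, $\Phi(f,s)=\lim_{n\to\infty} f(2^{-n}\lceil 2^n s\rceil)$, which is a pointwise limit of jointly measurable functions (each a countable sum of products of the measurable projection $\pi_{k2^{-n}}$ in $f$ with an indicator in $s$). Tonelli's theorem then gives, for each fixed $u\ge 0$, measurability of
\[
f\longmapsto \AF_f(u)=\int_0^u \mathbf 1_{\{\Phi(f,s)>a\}}\,\dd s,
\]
which can alternatively be deduced by adapting the Appendix \ref{app:additional} proof of \eqref{eq:timeintunnel_measurable} to the interval $[0,u]$. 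Since $f\in D_{a,\infty}$ forces $\AF_f$ to be continuous, non-decreasing and unbounded, one has the identity
\[
\{f:\AF_f^{-1}(t)<u\}=\{f:\AF_f(u)>t\},\qquad u,t\ge 0,
\]
so $f\mapsto\AF_f^{-1}(t)$ is Borel measurable for every $t\ge 0$.

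To conclude, the map $f\mapsto(f,\AF_f^{-1}(t))$ is then measurable into $D([0,\infty),\R)\times[0,\infty)$, and composing with the jointly measurable $\Phi$ yields measurability of $f\mapsto\Nl{a}(f)(t)=f(\AF_f^{-1}(t))$ for every $t\ge 0$. Taking $\mathcal T=\mathbb Q_+\cup\{0\}$ in the characterisation of the Borel $\sigma$-algebra on $D([0,\infty),\R)$ via projections (Section \ref{subsec:skor}), this proves measurability of $\Nl{a}$.

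The only genuinely technical step is the joint measurability of $\Phi$, where c\`adl\`ag regularity is used in an essential way; the remainder is largely formal. The other delicate point is the conversion of a sub-level set of the right-continuous inverse $\AF_f^{-1}$ into a condition on $\AF_f$ itself, which requires the continuity of $\AF_f$ (an arbitrary non-decreasing right-continuous function would only give $\{\AF_f^{-1}(t)<u\}=\{\AF_f(u-)>t\}$, and the left limit is less obviously measurable).
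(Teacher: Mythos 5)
Your proof is correct and takes a genuinely different, more modular route than the paper's. The paper proceeds by directly exhibiting, for each cylinder set $A=\pi_t^{-1}(B_r(x))$, the preimage $\Nl{a}^{-1}(A)\cap S_{a,\infty}$ as an explicit countable union--intersection of measurable sets, with a separate (and somewhat delicate) treatment of the cases $a\in B_r(x)$ and $a\notin B_r(x)$; the measurability of those building blocks ultimately rests on \eqref{eq:tunnels_measurable} and \eqref{eq:timeintunnel_measurable}. You instead factor the problem through three clean lemmas: (a) joint measurability of evaluation $(f,s)\mapsto f(s)$ via dyadic right-approximation and right continuity (this is essentially the same ingredient the paper proves in Appendix~\ref{app:additional} for \eqref{eq:timeintunnel_measurable}, just organised differently); (b) measurability of $f\mapsto\AF_f(u)$ by Tonelli; and (c) the sublevel-set identity $\{\AF_f^{-1}(t)<u\}=\{\AF_f(u)>t\}$, which uses precisely the continuity of $\AF_f$ and the unboundedness guaranteed by $f\in D_{a,\infty}$. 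Composing then gives measurability of each coordinate $f\mapsto f(\AF_f^{-1}(t))$, and the projection-generated $\sigma$-algebra finishes the job. All the steps check out (the identity in (c) is correct in both directions: continuity of $\AF_f$ gives $\AF_f(u)>t\Rightarrow\AF_f(u-\delta)>t$ for small $\delta>0$, and the converse is monotonicity), and the use of the product $\sigma$-algebra is licit since both factors are Polish, so the product $\sigma$-algebra equals the Borel $\sigma$-algebra of the product. What your route buys is that the measurability of the time change $f\mapsto\AF_f^{-1}(t)$ is isolated as a reusable intermediate result, and there is no case analysis on the position of $a$; the paper's route is more self-contained in that it never invokes joint measurability of the evaluation or Tonelli explicitly, but it is considerably longer to verify.
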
 

 \begin{proof} It can be found in Appendix \ref{app:additional}.
  \end{proof}

\begin{theorem}\label{thm:ff}
Assume that $f(0)>a$. Then, the map  $\Nl{a}$ is continuous at $\dpath$ if and only if $\dpath$ satisfies   \ref{A1} and
\begin{equation}\label{eq:0}
\Leb\big(t\in[0,\infty):\dpath (t)=a\big)=0.
\end{equation}
\end{theorem}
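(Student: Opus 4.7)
I would prove both directions separately, with necessity being straightforward via counterexamples and sufficiency requiring a careful construction. For necessity, I would argue by contrapositive using perturbations of $f$ patterned after the two counterexamples in Remark~\ref{rmk:ff_disc}(ii), but adjusted so that the initial condition $f(0)>a$ is preserved. If \eqref{eq:0} fails, perturbing $f$ upward by $1/n$ on $\{f=a\}$ produces a sequence $f_n\to f$ for which $\Nl{a}(f_n)$ retains a macroscopic positive delay absent from $\Nl{a}(f)$, preventing $J_1$ convergence. If \ref{A1} fails, then there exists an interval $[t_2,t_3)$ on which $f\le a$ yet $\sup f=a$; adding $(1/n)\mathbf 1_{[t_2,t_3)}$ forces $f_n>a$ on a thin region near $t_3$, so $\Nl{a}(f_n)$ develops a downward jump of non-vanishing size at a time where $\Nl{a}(f)$ is continuous, again breaking $J_1$ convergence.

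For sufficiency, fix a continuity point $T>0$ of $\Nl{a}(f)$ and a cutoff $T'>A_f^{-1}(T)$ that is also a continuity point of $f$, and work on $[0,T']$. Skorokhod convergence yields time changes $\gamma_n\in\Gamma$ with $\|\gamma_n-I\|_{C[0,T']}\to 0$ and $\|f_n(\gamma_n)-f\|_{[0,T']}\to 0$. Because the discontinuity set of $f$ is countable and $\{f=a\}$ has Lebesgue measure zero by \eqref{eq:0}, the indicators $\mathbf 1_{\{f_n(\gamma_n)>a\}}$ converge a.e.\ on $[0,T']$ to $\mathbf 1_{\{f>a\}}$; bounded convergence together with continuity of $A_f$ then gives $A_{f_n}\to A_f$ uniformly on $[0,T']$, whence $A_{f_n}^{-1}\to A_f^{-1}$ pointwise at each continuity point of $A_f^{-1}$. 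The goal is to construct a Skorokhod time change $\lambda_n$ on $[0,T]$, defined roughly by $\lambda_n(t)=A_{f_n}(\gamma_n(A_f^{-1}(t)))$ on regions of strict increase of $A_f^{-1}$ and interpolated linearly across jumps of $A_f^{-1}$, verifying $\|\lambda_n-I\|_{C[0,T]}\vee\|\Nl{a}(f_n)(\lambda_n)-\Nl{a}(f)\|_{[0,T]}\to 0$.

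The main obstacle is handling the jumps of $A_f^{-1}$. A jump at $t$ corresponds to an interval $[s_1,s_2]=[A_f^{-1}(t-),A_f^{-1}(t)]$ on which $f\le a$, with $\Nl{a}(f)(t)=f(s_2)>a$ (by one-sided analysis of how $f$ re-enters $(a,\infty)$ at $s_2$). Condition \ref{A1} is precisely what is needed to upgrade $f\le a$ on $[s_1,s_2)$ to $\sup_{[s_1,s_2)}f<a-\eta$ for some $\eta>0$. For large $n$, uniform closeness of $f_n\circ\gamma_n$ to $f$ then guarantees $f_n\circ\gamma_n<a$ on all of $[s_1,s_2)$, so $A_{f_n}$ is flat (up to a $\gamma_n$ distortion) on $[s_1,s_2)$, producing a jump of $A_{f_n}^{-1}$ of nearly the correct size nearly synchronous with that of $A_f^{-1}$, and $f_n(A_{f_n}^{-1}(t))$ close to $f(s_2)$. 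Assembling these local approximations across all (at most countably many) jumps of $A_f^{-1}$ in $[0,T]$ into a single increasing bijection $\lambda_n$ of $[0,T]$, while respecting monotonicity and continuity, accounts for most of the technical bookkeeping and, I expect, constitutes the bulk of the argument.
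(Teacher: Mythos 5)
Your high-level architecture for the ``if'' direction agrees with the paper's: obtain $A_{f_n}\to A_f$ uniformly, set $\lambda_n = A_{f_n}\circ\gamma_n\circ A_f^{-1}$ on the strict-increase set of $A_f^{-1}$, interpolate linearly across its jumps, and use \ref{A1} to force a positive separation from $a$ on the flat pieces. The necessity sketch is also in the spirit of the paper. However, the sufficiency plan has a concrete gap in exactly the step you flag as ``what \ref{A1} is precisely for.''

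You propose to apply \ref{A1} to each jump interval $[s_1,s_2)=[A_f^{-1}(t-),A_f^{-1}(t))$ of $A_f^{-1}$, on which $f\le a$, to conclude $\sup_{[s_1,s_2)}f<a-\eta$. But \ref{A1} is a conditional whose hypothesis requires flanking intervals $[t_1,s_1)$ and $[s_2,t_4)$ on which $f\ge a+\epsilon$ for some uniform $\epsilon>0$. Nothing forces this at the endpoints of a jump interval of $A_f^{-1}$: $f$ may approach $a$ from above as $t\uparrow s_1$, and $f(s_2)$ may equal $a$ (only $f(s_2)\ge a$ is guaranteed, not $>a$), so the hypothesis of \ref{A1} can fail and no $\eta>0$ is produced. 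Related untreated issues: (a) the jumps of $A_f^{-1}$ in $[0,A_f(T)]$ may accumulate (infinitely many small dips of $f$ below $a$ separated by smaller and smaller returns above $a$ are compatible with \ref{A1}), so a jump-by-jump assembly of $\lambda_n$ is not finite; and (b) on stretches where $a<f<a+\epsilon$ there is no jump of $A_f^{-1}$ to synchronize on, yet $f_n$ may dip below $a$ there and create spurious jumps of $A_{f_n}^{-1}$; your plan says nothing about why the resulting mismatch is harmless.

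The paper resolves all three issues with a single device: instead of parametrizing by jumps of $A_f^{-1}$, it parametrizes by $\epsilon$-level excursions, setting $\tau_n=\inf\{t>\bar\tau_{n-1}: f(t)\le a+\epsilon/2\}$ and $\bar\tau_n=\inf\{t>\tau_n: f(t)\ge a+\epsilon\}$. This guarantees, by construction, the flanking $f\ge a+\epsilon/2$ needed to invoke \ref{A1} on those excursions where $f$ actually stays $\le a$; it yields only finitely many excursions on $[0,T]$ because a c\`adl\`ag path can cross the levels $a+\epsilon/2$ and $a+\epsilon$ only finitely often; and it absorbs the near-$a$ stretches of type (b) into intervals where both $\Nl{a}(f)$ and $\Nl{a}(f_n\circ\gamma_n)$ lie in $[a,a+O(\epsilon)]$, so the linear rescaling of the time change there is automatically harmless without any separation estimate. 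Your proof, as stated, cannot produce the uniform $\eta$ it needs nor a finite decomposition, so this level-set grouping (or an equivalent device) must be added.

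One smaller remark: the claim that bounded convergence of the indicators $\mathbf 1_{\{f_n(\gamma_n)>a\}}\to\mathbf 1_{\{f>a\}}$ a.e.\ ``gives $A_{f_n}\to A_f$ uniformly'' elides a change-of-variables issue, since $\int_0^s\mathbf 1_{\{f_n(\gamma_n(u))>a\}}\,\dd u$ is not $A_{f_n}(s)$; one must instead argue a.e.\ convergence of $\mathbf 1_{\{f_n>a\}}$ directly (using $\gamma_n^{-1}\to I$ together with a.e.\ continuity of $f$), or follow the paper's more quantitative bound in Step (ii). This is recoverable, but as written the justification is incomplete.
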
 
\begin{proof} (Conventions: to simplify notation  in this proof we write $\supnorm{\cdot}{E}=\|\cdot\|_{E}$ and we drop the brackets when composing additive functionals, their inverses and Skorokhod time changes, for example by writing $\AF^{-1}_f(\gamma(\AF_g))(t)=\AF^{-1}_f\gamma \AF_g(t)$ for $f,g\in D_{a,\infty}([0,\infty),\mathbb R)$, $t\in [0,\infty)$ and $\gamma\in \Gamma$.)\\

We first prove the `if' direction. It is enough to prove sequential continuity, and so we take a sequence $\{\dpath_m\}_{m\in\mathbb N}\subset D_{a,\infty}([0,\infty),\mathbb R)$  such that $\dpath_m\to\dpath$ as $m\to\infty$.   The  proof is divided into five steps. Namely:  (i)  choose   how close $\dpath_m$ has to be to $ \dpath $; (ii)   prove a key estimate stating that the time delay due to $\dpath_m\le a< \dpath $ or $\dpath \le a< \dpath_m$ is small; (iii)   construct the time change $\stctil $ for $\Nl{a}(\dpath _m)$; (iv)   show the time change is close to the identity; (v)    show $\Nl{a}(\dpath _m)(\stctil )$ is close to $\Nl{a}(\dpath )$. \\

[\textit{Step (i)}] For an arbitrary $\epsilon\in(0,1]$ satisfying $\epsilon<f(0)-a$, let $\bar\tau_0=0$    and define recursively $(\tau_n,\bar\tau_n)\in (0,\infty]^2$ for $n\in\mathbb N$ as
 \[
\tau_n=\inf\big\{t>\bar\tau_{n-1}: \dpath (t)\le a+\epsilon/2\big\},\quad  \bar\tau_n=\inf\big\{t>\tau_{n}: \dpath (t)\ge a+\epsilon\big\}.
\]
Note that we must have $\tau_n<\bar \tau_n<\tau_{n+1}$  for each  $n\in\mathbb N$ such that $\tau_{n+1}<\infty$. We first assume that $\tau_n <\infty$  for each  $n\in\mathbb N$, which implies that   $\tau_{n}\to\infty$ as $n\to\infty$ because $\dpath$ is c\`adl\`ag (the proof in the remaining cases is similar and explained at the end of the proof). 
 Fix  $N\in\mathbb N$ large and let $T$ be a point of continuity of $\dpath $ such that $\bar\tau_{N}<T<\tau_{N+1}$. Then $T$ is a point of increase of  $\AF_\dpath $, which implies that $\AF^{-1}_\dpath  \AF_\dpath (T)=T$, and so $\AF_\dpath (T)$ is a point of continuity of $\Nl{a}(\dpath )$. Thus, recalling Section \ref{subsec:skor}, because as $T$ goes to infinity $\AF_\dpath (T)\to\infty$, it is enough to prove that $\Nl{a}(\dpath_m )\to \Nl{a}(\dpath )$ as $m\to\infty$ in $D([0,\AF_\dpath (T)],\mathbb R)$ for the restrictions of  $\Nl{a}(\dpath_m ) $ and $ \Nl{a}(\dpath )$  to $[0,\AF_\dpath (T)]$.\\
 
  We now select four positive constants less than 1, denoted by $\delta_1,\delta_2,\delta_3$ and $\delta_4$, and we set  $\delta(\epsilon,T)=\delta=\min\{ \delta_1,\delta_2,\delta_3,\delta_4\}$.\\
   
   Let $\delta_1= \delta_{1,1}\wedge  \delta_{1,2}$, where, for each of the $n_j\le N$ such that $f>a$ at some point in $[\tau_{n_j},\bar\tau_{n_j})$ we fix a $t_{n_j}\in[\tau_{n_j},\bar\tau_{n_j})$  such that $ \dpath(t_{n_j})> a$ and then we set $\delta_{1,1}=\min_j (\dpath(t_{n_j})-a )/2$, then using right continuity of $\dpath$ we can select $\delta_4(\delta_{1,1})=\delta_4\in(0,1]$ such that  for all $n_j$
\begin{equation}\label{eq:delta11}
  t_{n_j}+\delta_4<\bar\tau_{n_j}\quad\text{and}\quad  \inf_{t\in[t_{n_j},t_{n_j}+\delta_4]}f(t)> f(t_{n_j})-\delta_{1,1}
\end{equation}
(and observe it implies $\AF_f(t_{n_j}+\delta_4)-\AF_f(t_{n_j})=\delta_4$ for all $n_j$); meanwhile  for all the $n_i$ such that $f\le a$ for all points in $[\tau_{n_i},\bar\tau_{n_i})$ we use \ref{A1} to find $\delta_{1,2}\in (0,1]$ such that 
\begin{equation}\label{eq:delta12}
\max_i\sup_{t\in [\tau_{n_i},\bar\tau_{n_i})} f(t)<  a-\delta_{1,2}.
\end{equation}
Let $\delta_2\in (0,1]$ be such that $T+\delta_2$ is a continuity point of $\dpath $ and
\begin{equation}\label{eq:delta2}
\|\dpath  -\dpath (T)\|_{[T-\delta_2,T+\delta_2]}\le \epsilon\wedge \frac{\dpath (T)-a}3.
\end{equation}
By  \eqref{eq:0}, we can now let $\delta_3(\epsilon,T,\delta_2,\delta_4)=\delta_3\in (0,1] $  such that
\begin{equation}\label{eq:lebpsi}
\Leb \big(t\in [0,T]:|\dpath (t)-a|\le  \delta_3\big)\le \frac{ \epsilon \delta_2 \delta_4 }6.
\end{equation}
By \cite[Lemma 1 Section 12]{MR1700749}  there exists $N'(\epsilon,T)=N'\in\mathbb N$ and $0=t_1<t_2<...<t_{N'+1}=T$ such that 
\begin{equation}\label{eq:modulus}
\max_{n\in\{1,...,N'\}}\sup_{s,t\in[t_n,t_{n+1})}|\dpath (s)-\dpath (t)|\le \frac{\delta}6.
\end{equation}
By our definitions  $\dpath (t)-a\ge \epsilon/2 $ for all $t\in \cup_{n=0}^{N}[\bar\tau_{n},\tau_{n+1})$.
 Let $M\in \mathbb N$ such that for all $m\ge M$,  $\dpath _m$ is
\[
\text{ $\frac{\delta^2\epsilon}{6N'}$ close to $ \dpath $ in $D([0,T],\mathbb R) $ and}
\]
\begin{equation}\label{eq:T+d}
\text{ $\epsilon\wedge \frac{ \dpath (T)-a}2$ close to $ \dpath $ in $D([T,T+\delta_2],\mathbb R)$.}
\end{equation}
 Denote by $g$ any such $\dpath _m$, and   
 let $\gamma$  be a Skorokhod time change 
 on $[0,T]$ such that 
\begin{equation}\label{eq:delta}
\|\gamma-I\|_{C[0,T]}\vee\|g(\gamma)-\dpath \|_{[0,T]} \le   \frac{\delta^2\epsilon}{6N'} .
\end{equation}
Then by  \eqref{eq:modulus} and \eqref{eq:delta}
\begin{align}\nonumber
&\,\max_{n\in\{1,...,N'\}}\sup_{s,t\in[t_n,t_{n+1})}| g(\gamma(s))- g(\gamma(t))|\\ \nonumber
\le&\,  \max_{n\in\{1,...,N'\}}\sup_{s,t\in[t_n,t_{n+1})} | g(\gamma(s))-\dpath (s)|+|\dpath (s)-\dpath (t)|+|\dpath (t)- g(\gamma(t))|\\		
\le&\, \frac{\delta}2. \label{eq:moduluspsi'}
\end{align}
 
In Step (iii) we  construct a Skorokhod time change $\stctil $ on $[0,\AF_\dpath (T)]$ to show that $g(\AF^{-1}_g)$ is (up to a constant depending only on $T$) $\epsilon$-close to $\dpath (\AF^{-1}_\dpath )$ in $D([0,\AF_\dpath (T)],\mathbb R)$.

[\textit{Step (ii)}]  But first we prove the key bound on time delay
\begin{equation}\label{eq:keyestimate}
\sup_{t\in[0,T]}| \AF_g (\gamma(t))-\AF_\dpath (t)|\le \epsilon \delta_2\delta_4.
\end{equation} To prove \eqref{eq:keyestimate}, observe that for any $t\in[0,T]$, using \eqref{eq:delta} in the last inequality,
\begin{align*}
&\,\left| \AF_g (\gamma(t))-\AF_\dpath (t)\right|\\
=&\,\left|\int_{0}^{\gamma(t)}\mathbf1_{\{ g(z)> a\}}\,{\rm d}z-\int_{0}^{t}\mathbf1_{\{\dpath (z)> a\}}\,{\rm d}z\right|\\
=&\,\left|\int_{0}^{\gamma(t)}\mathbf1_{\{ g(z)> a\}}\,{\rm d}z-\int_{0}^{t}\mathbf1_{\{ g(\gamma(z))> a\}}\,{\rm d}z +\int_{0}^{t}\mathbf1_{\{ g(\gamma(z))> a\}}-\mathbf1_{\{\dpath (z)> a\}}\,{\rm d}z\right| \\
&\le\|\gamma-I\|_{[0,T]}+\Leb \big(z\in[0,T]:  g(\gamma(z))\le a<  g(z)\,{\rm or}\, g(z)\le a<  g(\gamma(z)) \big)\\
&\quad +\Leb \big( z\in[0,T]:  g(\gamma(z))\le a< \dpath (z)\,{\rm or}\,\dpath (z)\le a<  g(\gamma(z)) \big)\\
&\le  \frac{\epsilon\delta_2\delta_4 }{5} +\Leb \big(z\in[0,T]:  g(\gamma(z))\le a<  g(z)\,{\rm or}\, g(z)\le a<  g(\gamma(z)) \big)\\
&\quad +\Leb \big(z\in[0,T]: |\dpath (z)-a|\le \delta\big),
\end{align*}
and from  \eqref{eq:lebpsi} we know the last term is bounded by $  \epsilon\delta_2\delta_4/5$. 
We now bound the second term by $  \epsilon\delta_2\delta_43/5$   using \eqref{eq:moduluspsi'}. Denote by 
$$\mathcal S(z)\quad \text{the condition}\quad g(z)\le a< g(\gamma(z))\quad\text{or}\quad g(\gamma(z))\le a< g(z), $$
and observe that whenever $z,\gamma^{-1}(z)\in [t_n,t_{n+1})$, $n=1,2,...,N' $, condition $\mathcal S(z)$ implies that $|g(\gamma(z))-a|\le \delta/2$, by \eqref{eq:moduluspsi'}. Also recall that $\gamma$ and $\gamma^{-1}$ are increasing, so that $z\le\gamma(z)$ if and only if $\gamma^{-1}(z)\le z$. Then 
\begin{align*}
&\quad\,\Leb \big(z\in[0,T]:  \mathcal S(z) \big)\\
&= \sum_{n=1}^{N' } \Leb \big(z\in[t_n,t_{n+1}):  \mathcal S(z)  \big)\\
&= \sum_{n=1}^{N'}\Bigg( \Leb \big(z\in[t_n,t_{n+1}):  \mathcal S(z)  \big)\mathbf1_{\{t_n\ge\gamma(t_n),\, \gamma(t_{n+1})\ge t_{n+1}\}}\\
&\quad+ \Leb \big(z\in[t_n,t_{n+1}):  \mathcal S(z)   \big)\mathbf1_{\{t_n<\gamma(t_n),\,  \gamma(t_{n+1})\ge t_{n+1}\}}\\
&\quad+ \Leb \big(z\in[t_n,t_{n+1}):  \mathcal S(z)   \big)\mathbf1_{\{t_n\ge \gamma(t_n),\,  \gamma(t_{n+1})< t_{n+1}\}}\\
&\quad+ \Leb \big(z\in[t_n,t_{n+1}):  \mathcal S(z)   \big)\mathbf1_{\{t_n<\gamma(t_n),\,  \gamma(t_{n+1})< t_{n+1}\}}\Bigg)\\
&\le\sum_{n=1}^{N' }\Bigg( \Leb \big(z\in[t_n,t_{n+1}): | g (\gamma(z)) -a|\le\delta/2\big)\mathbf1_{\{t_n\ge\gamma(t_n),\, \gamma(t_{n+1})\ge t_{n+1}\}}\\
&\quad+ \Leb \big([ t_n,\gamma(t_n))\cup\{z\in[\gamma (t_n),t_{n+1}):  \mathcal S(z)  \} \big)\mathbf1_{\{t_n<\gamma(t_n),\,  \gamma(t_{n+1})\ge t_{n+1}\}}\\
&\quad+ \Leb \big([\gamma (t_{n+1}), t_{n+1}]\cup\{z\in[t_n,\gamma (t_{n+1})):  \mathcal S(z)   \}\big)\mathbf1_{\{t_n\ge \gamma(t_n),\,  \gamma(t_{n+1})< t_{n+1}\}}\\
&\quad+ \Leb \big([t_n,\gamma(t_n)]\cup [\gamma(t_{n+1}),t_{n+1}]\big)\mathbf1_{\{t_n<\gamma(t_n),\,  \gamma(t_{n+1})< t_{n+1}\}}\\
&\quad +\Leb\big(z\in[\gamma (t_n),\gamma (t_{n+1})):  \mathcal S(z)   \big)\mathbf1_{\{t_n<\gamma(t_n),\,  \gamma(t_{n+1})< t_{n+1}\}} \Bigg)\\
&\le\sum_{n=1}^{N' }\Bigg( \Leb \big(z\in[t_n,t_{n+1}): | g (\gamma(z)) -a|\le\delta/2\big)\mathbf1_{\{t_n\ge\gamma(t_n),\, \gamma(t_{n+1})\ge t_{n+1}\}}\\
&\quad+ \Leb \big(z\in[\gamma (t_n),t_{n+1}): |  g (\gamma(z))-a|\le \delta/2 \big)\mathbf1_{\{t_n<\gamma(t_n),\,  \gamma(t_{n+1})\ge t_{n+1}\}}\\
&\quad+ \Leb \big(z\in[t_n,\gamma (t_{n+1})): |  g (\gamma(z))-a|\le\delta/2  \big) \mathbf1_{\{t_n\ge \gamma(t_n),\,  \gamma(t_{n+1})< t_{n+1}\}}\\
&\quad+ \Leb \big(z\in[\gamma (t_n),\gamma (t_{n+1})): |  g (\gamma(z))-a|\le\delta/2   \big) \mathbf1_{\{t_n<\gamma(t_n),\,  \gamma(t_{n+1})< t_{n+1}\}}\Bigg)\\
&\quad +2N'\|\gamma-I\|_{C[0,T]}\\
&\le  \Leb \big(z\in[0,T]: |  g (\gamma(z))-a|\le\delta/2   \big) +2N'\|\gamma-I\|_{C[0,T]}
\\
&\le  \Leb \big(z\in[0,T]: |  \dpath (z)-a|\le\delta   \big) +2  \frac{\epsilon\delta_2\delta_4}{5}\\
&\le \frac{\epsilon\delta_2\delta_4}{5}+2  \frac{\epsilon\delta_2\delta_4}{5}.
\end{align*}

[\textit{Step (iii)}]
We partition $[0, \AF_\dpath (T)]$ as 
\[
\left(\bigcup_{n=1}^N \big[\AF_\dpath  (\bar\tau_{n-1}),\AF_\dpath  (\tau_{n} )\big)\cup \big[\AF_\dpath  (\tau_n ),\AF_\dpath  (\bar\tau_{n})\big)\right)\cup \big[\AF_\dpath  (\bar\tau_{N}),\AF_\dpath (T)\big],
\] and we recall that $[\AF_\dpath  (\bar\tau_{n-1}),\AF_\dpath  (\tau_{n} ))\neq\emptyset$ for all $n\le N$ as well as $[\AF_\dpath  (\bar\tau_{N}),\AF_\dpath (T)]$, meanwhile $[\AF_\dpath  (\tau_n ),\AF_\dpath  (\bar\tau_{n}))$ could be empty or not.
Note that 
  by the definition of $\bar\tau_{n-1}$ and $\tau_n$, for $ t \in [\AF_\dpath  (\bar\tau_{n-1}),\AF_\dpath  (\tau_{n} ))$,   $\bar\tau_{n-1}+s$ is a point of increase for  $\AF_\dpath$, where $ s=t-\AF_\dpath  (\bar\tau_{n-1})$,
 because $0\le s\le  \AF_\dpath  (\tau_{n} )-\AF_\dpath  (\bar\tau_{n-1})=\tau_{n} -\bar\tau_{n-1}$
 and thus
 \[
 t = \AF_\dpath  (\bar\tau_{n-1})+s= \AF_\dpath  (\bar\tau_{n-1}+s).
 \]
Then we just proved that for any $n=1,...,N$ and $ t \in [\AF_\dpath  (\bar\tau_{n-1}),\AF_\dpath  (\tau_{n} ))$
\begin{equation}\label{eq:s=t-}
\AF^{-1}_\dpath (t)
= \bar\tau_{n-1}+s,
\quad\text{where}\quad s=t-\AF_\dpath  (\bar\tau_{n-1}),
\end{equation}
which immediately implies
\begin{equation}\label{eq:psiabove}
\dpath \big(\AF^{-1}_\dpath (t)\big)=\dpath ( \bar\tau_{n-1}+s),\quad \text{for}\quad t\in \big[\AF_\dpath  (\bar\tau_{n-1}),\AF_\dpath  (\tau_{n} )\big).
\end{equation} 
Also for each $n\in\mathbb N$ such that $[\AF_\dpath (\tau_n ),\AF_\dpath (\bar\tau_{n}))\neq \emptyset$, 
\begin{equation}\label{eq:psibelow}
a\le\dpath \big(\AF^{-1}_\dpath (t)\big)\le a+\epsilon,\quad t\in \big[\AF_\dpath (\tau_n ),\AF_\dpath (\bar\tau_{n})\big).
\end{equation}

We now construct a Skorokhod time change $\stctil $ on $[0,\AF_\dpath  (T)]$ so that   $\Nl{a}(g)(\stctil )=g( \AF^{-1}_g \stctil (t))=g(\gamma(  \bar\tau_{n-1}+s))$   for $t\in [\AF_\dpath  (\bar\tau_{n-1}),\AF_\dpath  (\tau_{n}))$, to \textit{synchronise}  $\Nl{a}(g)$ with  \eqref{eq:psiabove} and apply \eqref{eq:delta}; meanwhile we \textit{linearly rescale} the time change on the remaining intervals exploiting  bounds for  $\Nl{a}(g )(\stctil )$  similar to \eqref{eq:psibelow}      (due to   $g( \gamma)\le a+ 2\epsilon$  on $[ \tau_n , \bar\tau_{n})$). This linear rescaling is small thanks to the key estimate \eqref{eq:keyestimate}.   For the very last interval we need an extra rescaling to get $\stctil \AF_\dpath (T)=\AF_\dpath (T)$, and there we use the $\delta_2$ ``control'' from \eqref{eq:delta2} and \eqref{eq:T+d}. \\
For $n=1,2,...,N$ we define 
\begin{equation*}
\stctil (t)=\left\{\begin{split}
& \AF_g \gamma \AF^{-1}_\dpath (t),&t\in \big[\AF_\dpath  (\bar\tau_{n-1}),\AF_\dpath  (\tau_{n} )\big),\\
&c_n\big(t-\AF_\dpath  (\tau_{n} )\big)+ \AF_g  \gamma(\tau_{n} ),&t\in \big[\AF_\dpath  (\tau_{n} ),\AF_\dpath  (\bar\tau_{n})\big),\\
& \AF_g  \gamma \AF^{-1}_\dpath (t),&t\in \big[\AF_\dpath  (\bar\tau_{N}),\AF_\dpath  (T-\delta_2)\big),\\
&  c_{N+1}\big(t-\AF_\dpath  (T-\delta_2)\big)+ \AF_g  \gamma(T-\delta_2),&t\in \big[\AF_\dpath  (T-\delta_2 ),\AF_\dpath  (T)\big],
\end{split}\right.
\end{equation*}
where  
\[
c_n=\frac{ \AF_g  \gamma(\bar\tau_n)- \AF_g  \gamma(\tau_{n} )}{\AF_\dpath  (\bar\tau_{n})-\AF_\dpath  (\tau_{n} )},
\]
when $[\AF_\dpath  (\tau_{n} ),\AF_\dpath  (\bar\tau_{n}))\neq\emptyset$, and  
\[
c_{N+1}=\frac{ \AF_\dpath  (T)-  \AF_g  \gamma(T-\delta_2)}{\AF_\dpath  (T)-\AF_\dpath  (T-\delta_2 ) }.
\] 
It is immediate to verify that $\stctil (0)=\gamma(0)=0$ (recalling that $\AF_f^{-1} (0)=0$  as $f(0)>a$) and  $\stctil  (\AF_\dpath (T))= \AF_\dpath (T)$. Moreover,  $\stctil $  is increasing and continuous as:
\begin{itemize}
  \item $ \AF_\dpath^{-1} $ is increasing and continuous on $[\AF_\dpath (\bar\tau_{n-1}),\AF_\dpath  (\tau_{n} ))$, $n=1,...,N+1$;
  \item $\gamma $ is increasing and continuous;
  
 \item    $ \AF_g $ is increasing and continuous on $[\gamma(\bar\tau_{n-1}),\gamma(\tau_{n}))$ by \eqref{eq:delta} and the definition of $\bar\tau_{n-1}$ and $\tau_{n}$, $n=1,...,N+1$;
 \item if $[\AF_\dpath (\tau_{n} ),\AF_\dpath  (\bar\tau_{n}))\neq\emptyset$    we are just linearly interpolating with a positive slope because by \eqref{eq:delta11}
\begin{equation}\label{eq:gedelta_1}
\AF_\dpath  (\bar\tau_{n})-\AF_\dpath  (\tau_{n} )>\delta_4 \quad \text{for all}	\quad n=1,...,N,
\end{equation}
and by the $ \delta$-bound in \eqref{eq:delta} and \eqref{eq:delta11}  
\[
\AF_g  \gamma(\bar\tau_n)- \AF_g  \gamma(\tau_{n} )> \delta_4  \quad \text{for all}	\quad n=1,...,N,
\] 
and similar observations yield $c_{N+1}>0$;
\item if $[\AF_\dpath (\tau_{n} ),\AF_\dpath  (\bar\tau_{n}))=\emptyset$ we only need to check that  $\AF_g \gamma \AF^{-1}_\dpath \AF_f(\bar \tau_{n})=\AF_g \gamma (\bar \tau_{n})$ equals  $$\lim_{t\uparrow \AF_f(\tau_{n})}\AF_g \gamma \AF^{-1}_\dpath (t)=\lim_{t\uparrow  \tau_{n}}\AF_g \gamma \AF^{-1}_\dpath \AF_f(t)=\AF_g \gamma (\tau_{n}),$$
where in the identities above we used that $\bar\tau_n$ and   every point in $[\bar\tau_{n-1},\tau_n)$  are points of increase for $\AF_\dpath$,  and in the last equality we used continuity of both $\AF_g$ and $\gamma$. To check this, observe that   \eqref{eq:delta12} holds, and so, by the $\delta$-bound in \eqref{eq:delta},   $g(\gamma(t))\le a$ for all $t\in [\tau_{n} ,\bar\tau_{n})$ and thus  $\AF_g \gamma(\tau_{n} )=\AF_g \gamma (\bar\tau_{n})$.
  \end{itemize}   So we proved that $\stctil $ is a Skorokhod time change on $[0,\AF_\dpath (T)]$.\\
  
[\textit{Step (iv)}]  We now verify that $\|\stctil -I\|_{C[0,\AF_\dpath (T)]}\le C \epsilon$, where $C>0$ depends only on $T$. First note that  by \eqref{eq:keyestimate} and \eqref{eq:gedelta_1}
\begin{equation}\label{eq:cn}
\begin{split}
&|c_n-1|\le \frac{2\delta_4\epsilon}{  \delta_4 }=2\epsilon,\\
&|c_{N+1}-1|\le \frac{|\AF_g  \gamma(T-\delta_2 )-\AF_\dpath ( T-\delta_2 )|}{\delta_2  }\le \epsilon.
\end{split}
\end{equation} 
If $t\in [\AF_\dpath  (\tau_{n}),\AF_\dpath  (\bar\tau_{n}))\neq\emptyset$, $n=1,...,N$,  then, using \eqref{eq:cn} and \eqref{eq:keyestimate}, 
\begin{align*}
|\stctil (t)-t|&=|c_n\left(t-\AF_\dpath  (\tau_{n})\right)+ \AF_g \gamma(\tau_{n})-t|\\
&\le (t-\AF_\dpath  (\tau_{n}))|c_n-1|+| \AF_g \gamma(\tau_{n})-\AF_\dpath  (\tau_{n})|\\
&\le \AF_\dpath (T)2\epsilon +\epsilon ,
\end{align*}
and similarly for $t\in [\AF_\dpath  (T-\delta_2),\AF_\dpath  (T)]$. Meanwhile, if $t\in[\AF_\dpath  (\bar\tau_{n-1}),\AF_\dpath  (\tau_{n}))$, $n=1,...,N$,  then, using \eqref{eq:s=t-} and \eqref{eq:keyestimate}, 
\begin{align*}
|\stctil (t)-t|
&= |\AF_g \gamma(\tau_{n-1}+s )-\AF_\dpath (\tau_{n-1}+s)|\le  \epsilon,
\end{align*}
and similarly for    $t\in[\AF_\dpath  (\bar\tau_{N}),\AF_\dpath  (T-\delta_2))$.\\

[\textit{Step (v)}]  It remains to prove that $\|g( \AF^{-1}_g \stctil )-\dpath ( \AF^{-1}_\dpath )\|_{[0,\AF_\dpath (T)]}\le C \epsilon$ for $C>0$ only dependent on $T$. For  $t\in  [\AF_\dpath  (\tau_{n} ),\AF_\dpath  (\bar\tau_{n}))\neq\emptyset$, $n=1,...,N$, the increasing function $
 \AF^{-1}_g\stctil (t)=\AF^{-1}_g\big(c_n\left(t-\AF_\dpath  (\tau_{n} )\right)+ \AF_g \gamma(\tau_{n} )\big)$
 is such that $$    \gamma(\tau_n)\le  \AF^{-1}_g\AF_g \gamma(\tau_n)\le \AF^{-1}_g\stctil (t)< \AF^{-1}_g\AF_g \gamma(\bar\tau_n)= \gamma(\bar\tau_n),$$
 so that by \eqref{eq:delta} and the definition of $\tau_n$ and $\bar\tau_n$
\begin{align*}
a\le g( \AF^{-1}_g\stctil (t))
& \le \sup_{z\in [\gamma(\tau_{n}),\gamma(\bar\tau_n) )}g(z)\le a+2\epsilon.
\end{align*}
Thus,  for   $n=1,...,N$, using \eqref{eq:psibelow},
\begin{align*}
&\,\|g(  \AF^{-1}_g\stctil )-\dpath ( \AF^{-1}_\dpath )\|_{ [\AF_\dpath  (\tau_{n} ),\AF_\dpath  (\bar\tau_{n}))}\\
\le &\, \|g( \AF^{-1}_g\stctil )-a\|_{ [\AF_\dpath  (\tau_{n} ),\AF_\dpath  (\bar\tau_{n}))}+\|a-\dpath ( \AF^{-1}_\dpath )\|_{ [\AF_\dpath  (\tau_{n} ),\AF_\dpath  (\bar\tau_{n}))}\le 3\epsilon.
\end{align*} 
For $t\in [\AF_\dpath  (\bar\tau_{n-1}),\AF_\dpath  (\tau_{n} ))$, $n=1,...,N$,  using    \eqref{eq:s=t-},
\[
g\big( \AF^{-1}_g\stctil (t)\big)=g\big( \AF^{-1}_g \AF_g \gamma \AF^{-1}_\dpath  \AF_\dpath (\bar\tau_{n-1}+s)\big) =g\big(\AF^{-1}_g \AF_g \gamma(\bar\tau_{n-1}+s)\big),
\]  and by   the $\epsilon/4$-bound from \eqref{eq:delta} we have that $\gamma(\bar\tau_{n-1}+s)\in [\gamma(\bar\tau_{n-1}),\gamma(\tau_{n}))$ is a point of increase for $\AF_g $,  where we used 
\[
\gamma\big(\bar\tau_{n-1}+\AF_\dpath (\tau_n )-\AF_\dpath (\bar\tau_{n-1})\big)=
\gamma(\bar\tau_{n-1}+\tau_n -\bar\tau_{n-1} ).
\]
So we proved that $
g( \AF^{-1}_g\stctil (t))  =g(\gamma(\bar\tau_{n-1}+s)),
$
  and by \eqref{eq:delta} and \eqref{eq:psiabove}
\begin{align*}
 \|g(  \AF^{-1}_g\stctil )-\dpath ( \AF^{-1}_\dpath )\|_{ [\AF_\dpath  (\bar\tau_{n-1}),\AF_\dpath  (\tau_{n} ))}\le \epsilon. 
\end{align*} 
The same argument holds for $ [\AF_\dpath  (\bar\tau_{N}),\AF_\dpath  (T-\delta_2))$. Finally, for  $t\in [\AF_\dpath  (T-\delta_2),\AF_\dpath  (T)]$,
\begin{align*}
\big|g\big(  \AF^{-1}_g \stctil (t)\big)-\dpath \big( \AF^{-1}_\dpath  (t)\big)\big|
&\le   \big|g\big(  \AF^{-1}_g \stctil (t)\big)-\dpath ( T)\big|+|\dpath (T)-\dpath  (T-\delta_2+s)|,
\end{align*}
 for $s= t-\AF_f(T-\delta_2)$ so that the second term is bounded by $\epsilon$ by \eqref{eq:delta2}, and thus it only remains to bound the first term uniformly for  $t\in [\AF_\dpath  (T-\delta_2),\AF_\dpath  (T)]$. 
Observe that 
\[
\AF^{-1}_g\stctil \AF_\dpath (T-\delta_2)=\AF^{-1}_g\AF_g  \gamma(T-\delta_2 )=\gamma(T-\delta_2) ,
\]
and 
\[
\AF^{-1}_g\stctil  \AF_\dpath (T)=\AF^{-1}_g \AF_\dpath (T)=\AF^{-1}_g(\AF_g  (T)+ \Delta)=\AF^{-1}_g\AF_g  (T+\Delta)=T+\Delta,
\]
where $|\Delta|=|\AF_\dpath (T)-\AF_g (T)|\le   \delta_2$ by \eqref{eq:keyestimate} and we used that all points in $[\gamma(T-\delta),T]$ are points of increase for  $\AF_g$, by the $\epsilon/4$-bound  in \eqref{eq:delta},   as well as all the points   $t\in[T,T+\delta_2]$, by
\begin{align*}
g(t)-a & =g(\overline\gamma(t))-\dpath (t) +(\dpath (T)-a)+(\dpath (t)-\dpath (T))\ge(\dpath (T)-a)/3 >0,
\end{align*}
using   \eqref{eq:delta2} and \eqref{eq:T+d}, with $\overline\gamma$ being any Skorokhod time change satisfying \eqref{eq:T+d}. So it remains to show that $g$ is close to $\dpath (T)$ on $[\gamma(T-\delta_2),T+\delta_2$].
Using \eqref{eq:delta2} and \eqref{eq:T+d} 
\begin{align*}
\|g- \dpath (T)\|_{[T,T+\delta_2]} &=\|g(\overline\gamma)- \dpath (T)\|_{[T,T+\delta_2]}\\
&\le \|g(\overline\gamma)- \dpath \|_{[T,T+\delta_2]}+\|\dpath - \dpath (T)\|_{[T,T+\delta_2]}
\le 2\epsilon .
\end{align*}
and using \eqref{eq:delta} and \eqref{eq:T+d}  
\begin{align*}
\|g- \dpath (T)\|_{[\gamma(T-\delta_2),T]} &= \|g(\gamma)- \dpath (T)\|_{[T-\delta_2,T]}\\
&\le \|g(\gamma)- \dpath \|_{[T-\delta_2,T]}+\|\dpath - \dpath (T)\|_{[T-\delta_2,T]}
\le 2\epsilon .
\end{align*}
 
 To complete the proof of the `if' direction, it remains to consider the case where $\tau_n =\infty$  for some  $n\in\mathbb N$. Let $m$ denote the smallest integer such that $\tau_n =\infty$. If $\bar\tau_{m-1}<\tau_m=\infty$ the same proof above applies easily as the limit process path $\dpath$ is eventually bounded below by $\epsilon/2>0$. Otherwise, $\tau_{m-2}<\bar\tau_{m-1}=\infty$, and then the same proof can be applied because for all $M\in\mathbb N$ there exists a continuity point $T>M$ of $f$  such that $f(T)>a$, as $\dpath$ spends an infinite amount of time above $a$. Then the last interval $[\tau_{m-2},T]$ is easy to treat as $\|\Nl{a}(f)-a\|_{[\AF_f(\tau_{m-2}),\AF_f(T)]}\le \epsilon$. And so we proved the `if' direction.\\
 
 The `only if' direction is simple and we only sketch the proof. If \ref{A1} fails, then let $\dpath_n$ equal to $f$ on $\R\backslash[t_2,t_3)$ and equal to  $\dpath+1/n$ on  $[t_2,t_3)$, then $f_n\to f$ uniformly but for any $T$ large and any Skorokhod time change $\gamma$, either $\|\Nl{a}(\dpath_n)(\gamma)-\Nl{a}(\dpath)\|_{[0,T]}\ge \epsilon/2$ for all $n$ large or $\|\gamma-I\|_{C[0,T]}\ge (t_2-t_1)\wedge(t_4-t_3)>0$. Alternatively, if $\Leb(t:\,\dpath(t)=a)>0$, we can find       $t_1<t_2\le t_3<t_4\in[0,\infty)$ such that $\Leb(t\in[t_1,t_2):\,\dpath(t)=a)=t_2-t_1$, $\dpath\le a$ on (the possibly empty interval) $[t_2, t_3)$ and  $\dpath> a$ on $(t_3,t_4)$. Define $f_n$ equal to $f$ everywhere but from $[t_1,t_2)$ where we let $f_n=f+1/n$ and a similar argument as above concludes.  
 
 \end{proof}

We   immediately have the version of Theorem \ref{thm:ff} for a right barrier $b\in\mathbb R$   and a two sided barrier.
\begin{definition}\label{def:ff_b_twosd}
Let $a,b\in\mathbb R$ with $a<b$. Define the \textit{right fast-forwarding map at $b\in\R$}, $\Nr{b}:  D_{-\infty ,b}([0,\infty),\mathbb R)\to   D ([0,\infty),\mathbb R)$,  by $\Nl{b}(f)=f(\AF^{-1}_f)$ where $ A^{-1}_\dpath (t)=\inf \left\{s>0:\AF_\dpath (s)>t\right\}$ and $\AF_\dpath (t)=\int_0^t \mathbf1_{\{\dpath (z)< b\}}\,{\rm d}z$; and define the \textit{two-sided fast-forwarding map at $a,b$ } as \[\Nl{a}(\Nr{b}):  D_{a,b}([0,\infty),\mathbb R)\to   D ([0,\infty),\mathbb R).\] 
\end{definition}
 Note that   $\Nr{b}$ is clearly measurable by using the same argument as in Proposition \ref{prop:ff_meas} and so is $\Nl{a}(\Nr{b})$.

\begin{corollary}\label{cor:ff_b}
Assume that $f(0)<b\in\R$. Then, the map  $\Nr{b}$ is continuous at $\dpath$ if and only if   $\dpath  $   satisfies   $\Leb(t\in[0,\infty):\dpath (t)=b)=0$ and
 \begin{description}
\item[(Nr)\label{A2}]:  if there exists an $\epsilon>0$ and $t_1<t_2<t_3<t_4$ such that $f(t)\le b-\epsilon$ for all $t\in [t_1,t_2)\cup [t_3,t_4)$ and $f(t)\ge b$ for all $t\in [t_2,t_3)$, then $\inf_{t\in [t_2,t_3)}f(t)>b$.
\end{description}
\end{corollary}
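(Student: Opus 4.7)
My plan is to reduce Corollary \ref{cor:ff_b} to Theorem \ref{thm:ff} via the reflection homeomorphism $\sigma: D([0,\infty),\mathbb R)\to D([0,\infty),\mathbb R)$ defined by $\sigma(f)(t)=-f(t)$. First I would observe that $\sigma$ is a $J_1$-isometry: composition with a Skorokhod time change commutes with negation and the uniform norm is preserved, so $d_{J_1,T}(\sigma f, \sigma g) = d_{J_1,T}(f,g)$ for every $T>0$, and hence also $d_{J_1}(\sigma f, \sigma g) = d_{J_1}(f,g)$. In particular $\sigma$ is a homeomorphism and it maps $D_{-\infty,b}([0,\infty),\mathbb R)$ bijectively onto $D_{-b,\infty}([0,\infty),\mathbb R)$.

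Next I would establish the factorisation $\Nr{b} = \sigma\circ \Nl{-b}\circ \sigma$ on $D_{-\infty,b}([0,\infty),\mathbb R)$. Writing $\widetilde f = \sigma(f) = -f$, the identity
\[
\int_0^t \mathbf1_{\{f(z)<b\}}\,\dd z = \int_0^t \mathbf1_{\{\widetilde f(z)>-b\}}\,\dd z
\]
shows that the additive functional used to define $\Nr{b}(f)$ coincides with the one used to define $\Nl{-b}(\widetilde f)$, hence their right inverses agree and
\[
\Nr{b}(f) = f\circ \AF_f^{-1} = -\widetilde f\circ \AF_{\widetilde f}^{-1} = \sigma\bigl(\Nl{-b}(\sigma f)\bigr).
\]
Because $\sigma$ is a homeomorphism, $\Nr{b}$ is continuous at $f$ if and only if $\Nl{-b}$ is continuous at $-f$.

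To conclude, I would apply Theorem \ref{thm:ff} to $\Nl{-b}$ at the point $-f$ and transcribe the resulting conditions in terms of $f$. The initial assumption $f(0)<b$ reads $(-f)(0)>-b$, matching the hypothesis of the theorem. The condition $\Leb(t:\,(-f)(t)=-b)=0$ is clearly equivalent to $\Leb(t:\,f(t)=b)=0$. Finally, rewriting \ref{A1} for $-f$ at barrier $-b$ and using $\sup(-f) = -\inf f$, the three inequalities $-f(t)\ge -b+\epsilon$, $-f(t)\le -b$ and $\sup\{-f(t):t\in[t_2,t_3)\}<-b$ become $f(t)\le b-\epsilon$, $f(t)\ge b$ and $\inf\{f(t):t\in[t_2,t_3)\}>b$, which is exactly \ref{A2}. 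No substantive obstacle arises: all the analytic content sits inside Theorem \ref{thm:ff}, and this corollary is a bookkeeping consequence of the symmetry $f\leftrightarrow -f$.
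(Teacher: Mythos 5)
Your proposal is correct and matches what the paper intends: after Theorem \ref{thm:ff} the authors simply remark that the right-barrier version ``immediately'' follows, and the reflection $f\mapsto -f$ is exactly the symmetry that makes it immediate. Your write-up fills in the bookkeeping (checking $\sigma$ is a $J_1$-isometry that maps $D_{-\infty,b}$ onto $D_{-b,\infty}$, the factorisation $\Nr{b}=\sigma\circ\Nl{-b}\circ\sigma$ via equality of the additive functionals, and the translation of \ref{A1} into \ref{A2}) cleanly and without gaps.
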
 

\begin{corollary}\label{cor:ff_twosid}
Assume that $f(0)\in(a,b)$. Then, the map $\Nl{a}(\Nr{b})$ is continuous at $\dpath$ if and only if   $\dpath  $ satisfies    \ref{A1}, \ref{A2} and $\Leb(t\in[0,\infty):\dpath (t)\in\{a,b\})=0$.
\end{corollary}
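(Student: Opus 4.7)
The plan is to factor $\Nl{a}(\Nr{b}) = \Nl{a}\circ \Nr{b}$ and deduce continuity at $f$ by applying Corollary~\ref{cor:ff_b} to $\Nr{b}$ at $f$, followed by Theorem~\ref{thm:ff} to $\Nl{a}$ at $\Nr{b}(f)$. For the `if' direction, since $f(0)<b$ and $f$ satisfies \ref{A2} together with $\Leb(\{f=b\})=0$, Corollary~\ref{cor:ff_b} immediately gives continuity of $\Nr{b}$ at $f$. Before feeding $\Nr{b}(f)$ into Theorem~\ref{thm:ff}, I would check that $\Nr{b}(f)\in D_{a,\infty}([0,\infty),\R)$: since $f\in D_{a,b}$ spends infinite Lebesgue time in $(a,b)$ and $\Nr{b}$ skips only times when $f\ge b$, this time passes unchanged into $\Nr{b}(f)$, which therefore spends infinite time in $(a,\infty)$. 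Note also $\Nr{b}(f)(0)=f(0)>a$ since $\AF^{-1}_f(0)=0$ when $f(0)<b$.

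The core of the argument is to verify that $\Nr{b}(f)$ inherits both \ref{A1} and $\Leb(\{\Nr{b}(f)=a\})=0$. The measure-zero condition is a change of variables: the additive functional $\AF_f(t)=\int_0^t\mathbf 1_{\{f(z)<b\}}\,\dd z$ is strictly increasing with unit slope on the open set $\{f<b\}$, so $\{t:\Nr{b}(f)(t)=a\}$ injects measure-preservingly into $\{s:f(s)=a\}$, which has measure zero. For \ref{A1}, I would argue by contradiction: suppose $\epsilon>0$ and $t_1<t_2<t_3<t_4$ witness its failure for $\Nr{b}(f)$, with (WLOG) $\epsilon\le b-a$, and set $s_i=\AF^{-1}_f(t_i)$. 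On $[t_2,t_3)$ the relation $\Nr{b}(f)\le a<b$ forces $f(\AF^{-1}_f(t))\le a$, and any jumps of $\AF^{-1}_f$ inside this interval correspond to maximal subintervals of $[s_2,s_3]$ where $f\ge b$; thus $f$ on $[s_2,s_3]$ is confined to $\{f\le a\}\cup\{f\ge b\}$, with the $\{f=b\}$ portion of measure zero by hypothesis and \ref{A2} (which jointly rule out any interval where $f\equiv b$). On $[s_1,s_2)\cup[s_3,s_4)$, $f\ge a+\epsilon$ since either $f\ge b\ge a+\epsilon$ in skipped regions or $f(\cdot)=\Nr{b}(f)(\AF_f(\cdot))\ge a+\epsilon$ in non-skipped regions. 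Picking a sequence $u_n\in[t_2,t_3)$ with $\Nr{b}(f)(u_n)\to a$ yields a limit point $s^*\in[s_2,s_3]$ in the $\{f\le a\}$ regime with $f(s^*)$ arbitrarily close to $a$, contradicting the conclusion $\sup_{[s_2,s_3)}f<a$ provided by \ref{A1} for $f$ with these $s_i$.

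With these hypotheses verified, Theorem~\ref{thm:ff} gives continuity of $\Nl{a}$ at $\Nr{b}(f)$, and composition gives continuity of $\Nl{a}(\Nr{b})$ at $f$. The `only if' direction reduces to four counterexample constructions symmetric to the end of the proof of Theorem~\ref{thm:ff}: perturbations near a level set of $a$ or $b$ of positive Lebesgue measure when the measure hypothesis fails, and `bump' perturbations when \ref{A1} or \ref{A2} fails, in each case producing $\dpath_n\to f$ in $D([0,\infty),\R)$ with $\Nl{a}(\Nr{b})(\dpath_n)\not\to \Nl{a}(\Nr{b})(f)$. The main obstacle is the bookkeeping in the verification of \ref{A1} for $\Nr{b}(f)$: one must untangle how discontinuities of $\AF^{-1}_f$ interleave with the subintervals on which $\Nr{b}(f)\le a$. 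The combination of \ref{A2} and the measure-zero hypothesis at $b$ provides precisely the dichotomy ``$f\le a$ or $f\ge b$'' needed to reduce \ref{A1} for $\Nr{b}(f)$ to \ref{A1} for $f$.
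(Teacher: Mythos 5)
Your composition strategy---applying Corollary~\ref{cor:ff_b} to $\Nr{b}$ at $\dpath$ and then Theorem~\ref{thm:ff} to $\Nl{a}$ at $\Nr{b}(\dpath)$---is genuinely different from what the paper intends: the paper treats the corollary as immediate, i.e.\ the proof of Theorem~\ref{thm:ff} runs verbatim with two barriers in place of one, rather than by factoring through the one-sided results. Your route is an attractive shortcut, and the auxiliary verifications you sketch are sound: $\Nr{b}(\dpath)\in D_{a,\infty}([0,\infty),\mathbb R)$ because the time $\dpath$ spends in $(a,b)$ is unaffected by $\Nr{b}$; $\Nr{b}(\dpath)(0)=\dpath(0)>a$; and $\Leb(\{\Nr{b}(\dpath)=a\})=0$ follows since $\AF_\dpath^{-1}$ is injective and $\AF_\dpath$ is $1$-Lipschitz, so it cannot expand the null set $\{\dpath=a\}$.

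The gap is in the claim that $\Nr{b}(\dpath)$ inherits \ref{A1}. Setting $s_i=\AF_\dpath^{-1}(t_i)$, you invoke ``the conclusion $\sup_{[s_2,s_3)}\dpath<a$ provided by \ref{A1} for $\dpath$ with these $s_i$.'' But \ref{A1} yields that conclusion only when $\dpath\le a$ on all of $[s_2,s_3)$, and here $\dpath$ is typically $\ge b$ on the subintervals of $[s_2,s_3)$ that $\Nr{b}$ skips---so the hypothesis of \ref{A1} for $\dpath$ with $(s_1,s_2,s_3,s_4)$ fails, and the quoted conclusion is in fact usually false ($\sup_{[s_2,s_3)}\dpath\ge b$). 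The repair requires a decomposition you do not supply: first show that $\dpath$ never enters $(a,b)$ on $(s_2,s_3)$ (else, by right continuity, a set of positive measure near such a point maps under $\AF_\dpath$ into $(t_2,t_3)$ where $\Nr{b}(\dpath)>a$), so that on $(s_2,s_3)$ the path alternates between $\{\dpath\le a\}$ and $\{\dpath\ge b\}$; observe that each transition is a jump of size at least $b-a$, hence there are only finitely many on the compact interval $[s_2,s_3]$ since $\dpath$ is c\`adl\`ag; split $[s_2,s_3)$ into the finitely many maximal subintervals on which $\dpath\le a$, apply \ref{A1} to each (with neighbouring ``$\ge a+\epsilon$'' windows supplied either by the $\ge b$ regions or by $[s_1,s_2)$, $[s_3,s_4)$), and take the maximum of the resulting strict bounds. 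Without this finiteness step, your closing limit-point argument does not close: an infinite family of such subintervals could have suprema $\uparrow a$ without any single one attaining $a$, and your contradiction with \ref{A1} for $\dpath$ evaporates.
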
 

\begin{remark}\label{rmk:convffmaps}
If $a=-1$ and $b=1$ we write $\Nl{a}=\Nlone$ and $\Nr{b}=\Nrone$.
\end{remark}

We conclude this section with a simple statement showing that $\Nl{a}$ and $\Nr{b}$ commute and that their composition equals the natural definition of deletion  the time spent outside $(a,b)$. The proof can be found in Appendix \ref{app:additional}.

 \begin{proposition}\label{prop:FF_repr_ab} 
 For any $a <   b $,
$\Nl{a}(\Nr{b})=\Nr{b}(\Nl{a}) $  on $D_{a,b}([0,\infty),\mathbb R)$ 
  and  $\Nl{a}(\Nr{b})$ can be equivalently defined as $f\mapsto f((\AF_{(a,b)}^f)^{-1})$ where $\AF_{(a,b)}^f(t)=\int_0^t\indi{f(z)\in(a,b)}\,\dd z$. 
\end{proposition}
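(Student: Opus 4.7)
The plan is to reduce both claims to the single identity $\Nl{a}(\Nr{b}(f))=f\circ\mu^{-1}$, where $\mu(t):=\AF^f_{(a,b)}(t)=\int_0^t\mathbf{1}_{\{f(z)\in(a,b)\}}\,\dd z$. Once this is established, the commutativity $\Nl{a}(\Nr{b})=\Nr{b}(\Nl{a})$ follows from the symmetric argument (swapping the roles of the two half-line maps), and the second claim is exactly this identity.

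Set $\beta(t)=\int_0^t\mathbf 1_{\{f(z)<b\}}\,\dd z$, so that $\Nr{b}(f)=f\circ\beta^{-1}$ by Definition~\ref{def:ff_b_twosd}, and set $\alpha(t)=\int_0^t\mathbf 1_{\{\Nr{b}(f)(z)>a\}}\,\dd z$; all three of $\alpha,\beta,\mu$ are continuous non-decreasing since their integrands are bounded. The central step is the change-of-variables identity $\alpha\circ\beta=\mu$, which I would prove by writing
\[
\alpha(\beta(u))=\int_0^{\beta(u)}\mathbf 1_{\{f(\beta^{-1}(z))>a\}}\,\dd z=\int_0^u \mathbf 1_{\{f(v)>a\}}\mathbf 1_{\{f(v)<b\}}\,\dd v,
\]
where the substitution $z=\beta(v)$ uses the absolute continuity of $\beta$ (giving $\dd z=\mathbf 1_{\{f(v)<b\}}\,\dd v$ a.e.) together with the càdlàg property: whenever $f(v)<b$, right-continuity forces $\beta$ to be strictly increasing on a right neighbourhood of $v$, hence $\beta^{-1}(\beta(v))=v$, so the integrand collapses to $\mathbf 1_{\{f(v)\in(a,b)\}}$ for a.e.\ $v$.

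Combining $\alpha\circ\beta=\mu$ with the continuity of all three functions yields $\mu^{-1}(s)=\beta^{-1}(\alpha^{-1}(s))$ for every $s\ge 0$, and substituting this into the definition of $\Nl{a}$ gives
\[
\Nl{a}(\Nr{b}(f))(s)=\Nr{b}(f)\bigl(\alpha^{-1}(s)\bigr)=f\bigl(\beta^{-1}(\alpha^{-1}(s))\bigr)=f(\mu^{-1}(s)),
\]
as required. I expect the main obstacle to be the composition-of-inverses identity: at values of $s$ for which $\mu$ has a flat piece, one must check that both $\mu^{-1}(s)$ and $\beta^{-1}(\alpha^{-1}(s))$ sit at the right endpoint of the corresponding maximal flat interval of $\mu$, where $f$ has just re-entered $(a,b)$, so the final evaluations agree. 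This requires a short case analysis separating the flat pieces where $f\ge b$ (detected by $\beta$) from those where $f\le a$ (detected by $\alpha$ via $\Nr{b}(f)$), but uses only monotonicity, continuity, and the càdlàg property, nothing deeper.
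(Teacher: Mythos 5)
Your proof is correct and takes a genuinely different route. You reduce everything to the identity $\alpha\circ\beta=\mu$ obtained by a Lebesgue change of variables (the c\`adl\`ag property enters only to guarantee that every $v$ with $f(v)<b$ is a point of increase of $\beta$, so that $\beta^{-1}(\beta(v))=v$), and then compose the right inverses. The paper instead argues pointwise: for each $x\in(a,b)$ it unwinds the nested right inverses to characterize exactly when $\Nr{b}(\Nl{a}(f))(t)=x$, shows this matches the characterization for $f\circ(\AF^f_{(a,b)})^{-1}$ (the relevant $t'$ being a point of increase of $\AF^f_{(a,b)}$ with $t=\AF^f_{(a,b)}(t')$), and then extends equality to the boundary values $a$ and $b$ by right-continuity. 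Your route is shorter and makes transparent why the additive functionals compose; the paper's is more elementary but requires a separate treatment of the boundary values. One remark: the obstacle you anticipate in your final paragraph is illusory. Since $\alpha$ is continuous and non-decreasing with $\alpha(\alpha^{-1}(s))=s$, the equivalence $\alpha(w)>s\iff w>\alpha^{-1}(s)$ holds for every $s,w\ge 0$, hence
\[
\mu^{-1}(s)=\inf\{u:\alpha(\beta(u))>s\}=\inf\{u:\beta(u)>\alpha^{-1}(s)\}=\beta^{-1}(\alpha^{-1}(s))
\]
identically; no case analysis on flat pieces of $\mu$ is needed once $\alpha\circ\beta=\mu$ is established.
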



\section{One-sided and Gr\"unwald type processes}\label{sec:YandYh}
This section studies the pathwise construction of our Markov processes on $[-1,1]$, obtained by applying the reflecting, killing and fast-forwarding maps introduced in Section \ref{sec:skor_maps}. In Section \ref{sec:YLR} we study the restrictions of our spectrally positive process $Y$, meanwhile in Section \ref{sec:YLRh} we study the restrictions of our discrete-valued Gr\"unwald type approximation to $Y$. These last discrete-valued processes allow us to gain crucial insights on the boundary behaviour of the limit processes, along with providing a tool to   derive the boundary conditions identified in Theorem \ref{thm:main_II_intro}.
\subsection{One-sided processes and their boundary modifications}\label{sec:YLR}
We denote by $Y=(Y_t)_{t\ge 0}$ the c\`{a}dl\`{a}g modification  of the spectrally positive L\'evy process with Laplace exponent $\sym$, i.e. $\exp\{\sym(\xi)\}=\mathbb E_0[\exp\{ -\xi Y_1\}]$ for $\Re \xi\ge 0$ \cite[Page 188]{MR1406564}, with $\sym$   defined in \ref{H0}.
\begin{description}
\item[(H0)\label{H0}] For a nonnegative Borel measure $\phi$  on $(0,\infty)$ such that
\[
\int_{(0,\infty)} (y^2\wedge y)\,\phi(\dd y)<\infty\quad\text{and}\quad \int_{(0,1)} y\,\phi(\dd y)=\infty,
\]
  we define $\sym(\xi)=\int_{(0,\infty)}\left(e^{-\xi y}-1+\xi y\right)	\phi(\dd y),$ for $\Re \xi\ge0$.
\end{description}
Note that    $\sym(\xi) =\xi^2\int_0^\infty e^{-\xi x}\Phi(x)\,\dd x$ where  $\Phi(x)= \int_x^\infty \phi((y,\infty))\,\dd y$,   
and by  Dominated Convergence Theorem, as $|\xi|\downarrow 0$
\begin{equation}
\begin{split} 
\sym(\xi)\to 0 \quad\text{and}\quad  \sym'(\xi) \to 0.
\end{split}
\label{eq:convhomega}
\end{equation} 
 
\begin{remark}\label{rmk:Yproperties} We collect some basic facts about the process $Y$. For examples we refer to \cite[Example 2.4]{BKT20}. \begin{enumerate}[(i)]
\item The process $Y$ is a Feller process and its Feller semigroup on $C_0(\mathbb R)$ is generated by the closure of $(\Mr ,C_c^\infty(\mathbb R)) $ (see proof of Proposition \ref{prop:gruntolevy}), where 
\[
\Mr g(x):=\frac{\dd ^2}{\dd x^2}\int_0^\infty \Phi(y)g(x+y)\,\dd y.
\]
   
\item  Under assumption \ref{H0}, the Laplace exponent $\psi$ characterises spectrally positive, recurrent L\'evy processes with paths of unbounded variation and no diffusion component \cite[Remark 2.3.i]{BKT20}. In particular,  $\mathbb P_{Y_0}$-a.s., $\tau_{(-1,1)}^Y =\tau_{[-1,1]}^Y$,  $\tau_{(-1,\infty)}^Y =\tau_{[-1,\infty)}^Y$ and $\tau_{(-\infty,1)}^Y =\tau^Y_{(-\infty,1]}$, all immediate consequences of  \eqref{prop:exit_open_closed}. 
\item By \cite[Theorem 8.1.iii]{MR2250061} we know that  for all $\LTp \ge0$ 
\[
x\mapsto\mathbb E_x\left[e^{-\LTp \tau^Y_{(-\infty,1]}}\mathbf 1_{\{ \tau^Y_{(-\infty,1]}<\tau_{[-1,\infty)}^Y\}}  \right] \in C_0(-1,1]\]  
  and it equals 1 for $x=1$, and
\[
 x\mapsto \mathbb E_x\left[e^{-\LTp\tau_{[-1,\infty)}^Y}\mathbf 1_{\{ \tau^Y_{(-\infty,1]}>\tau_{[-1,\infty)}^Y\}} \right]  \in C_0[-1,1)
\] 
and it equals 1 for $x=-1$. Moreover,  for any $\LTp>0$ and $g\in C[-1,1]$,  
$$
x\mapsto R_\LTp^{{\rm DD}} g(x):=\int_0^\infty e^{-\LTp  t}\mathbb E_x\big[g(Y_t)\,\mathbf1_{\{t<\tau^Y_{(-1,1)}\}}\big]\,\dd t \in C_0(-1,1),
$$ which is a consequence of \cite[Theorem 8.7]{MR2250061}. 
\end{enumerate}

\end{remark}

\begin{table}[h]
\centering
\vline
\begin{tabular}{l|c|c}
  \hline
\hspace{1cm} Definition of $\Y{LR}$  
\hspace{.1cm} &   Equals a.s. &  Equals in law \\
	\hline
	1.  $Y^{{\rm DD}}=  \Dl (\Dr(Y)) $ & $\Dr(\Dl(Y))$   &  \\
  \hline 
2.	$  Y^{{\rm DN}}= \Dl  (\Nrone(Y)) $ & $ \Nrone (\Dl (Y))$ &  $ \Dl(\Narone(Y)) $  \\
  \hline
3.	 $  Y^{{\rm ND}}= \Dr(\Nlone(Y)) $  & $\Nlone(\Dr (Y))$ & \\
  \hline
4.	$  Y^{{\rm NN}}= \Nrone(\Nlone(Y))$ &  $\Nlone(\Nrone( (Y))$ &  $ \Nlone(\Narone(Y)) $ \\
  \hline
5.	$   Y^{{\rm N^*D}}= \Dr(\Nalone (Y))  $  \\
  \hline
6.	 $Y^{{\rm N^*N}}= \Nrone(\Nalone (Y))$ & & $ [\Nalone\Narone](Y)$  \\
  \hline 
\end{tabular}\vline
\caption{\label{tab:YLR}The definition of each process $\Y{LR}$, recalling that $Y$ is defined in Section \ref{sec:YLR} and the maps are defined in Section \ref{sec:skor_maps}.}
\end{table}  

To conclude the argument of our main result, Theorem \ref{thm:main_II_intro}, we need the resolvent operator of each $\Y{LR}$ to map $C_0(\Omega)$  to itself,  hence we prove this below.

\begin{proposition}\label{prop:cont_semigroup}
The c\`adl\`ag processes $\Y{LR}$ in Table \ref{tab:YLR} are strong Markov and the respective $\LTp$-resolvent operators  ($\LTp>0$) map $ C_0(\Omega)$ to itself.
\end{proposition}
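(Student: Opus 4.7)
The plan is to establish both properties in parallel for all six processes, exploiting the pathwise construction $\Y{LR}=F_{\rm LR}(Y)$ (Table \ref{tab:YLR}) and the Skorokhod continuity theory of Section \ref{sec:skor_maps}. Each $F_{\rm LR}$ is a composition of three elementary operations on paths: killing at the first exit from an open interval, Skorokhod reflection at an endpoint, and fast-forwarding --- the time change by the right inverse of a continuous additive functional of the form $t\mapsto\int_0^t\mathbf 1_B(Y_s)\,\dd s$. The strong Markov property then follows because each of these operations preserves strong Markovness when applied to a strong Markov process: by classical first-exit-time arguments for killing, by Skorokhod problem theory for reflected spectrally one-sided L\'evy processes, and by the Volkonsky-type theorem on time changes via inverses of CAFs. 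Compositions preserve it, so each $\Y{LR}$ is strong Markov.

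For the resolvent $R_\LTp g(x)=\int_0^\infty e^{-\LTp t}\mathbb E_x[g(\Y{LR}_t)]\,\dd t$, with $g\in C_0(\Omega)$ extended by zero to $[-1,1]$, I would first show continuity of $x\mapsto R_\LTp g(x)$ on $\Omega$. If $x_n\to x$ in $\Omega$, the spatial homogeneity of $Y$ gives $\mathbb P_{x_n}\circ Y^{-1}\Rightarrow \mathbb P_x\circ Y^{-1}$ on $D([0,\infty),\R)$ (translation being continuous in the Skorokhod topology). The continuity results of Section \ref{sec:skor_maps} --- Lipschitz reflection, Propositions \ref{prop:SMkill1}--\ref{prop:SMkill-1} for killing, and Theorem \ref{thm:ff} together with Corollaries \ref{cor:ff_b}--\ref{cor:ff_twosid} for fast-forwarding --- apply $\mathbb P_x$-a.s.\ at the path of $Y$: \ref{A1} holds because $Y$ has no negative jumps; \ref{A2} follows from spectral positivity combined with Remark \ref{rmk:Yproperties}-(ii); and $\Leb(\{t:Y_t=\pm 1\})=0$ since $Y$ has unbounded variation and no diffusion component. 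Corollary \ref{cor:sparated_CMT} then delivers $\mathbb P_{x_n}\circ (\Y{LR})^{-1}\Rightarrow \mathbb P_x\circ (\Y{LR})^{-1}$. Since every fixed $t>0$ is $\mathbb P_x$-a.s.\ a continuity time of $\Y{LR}$ (its jump set has no fixed atoms), Proposition \ref{prop:asthenweak} gives $\mathbb E_{x_n}[g(\Y{LR}_t)]\to \mathbb E_x[g(\Y{LR}_t)]$, and dominated convergence in $t$ yields continuity of $R_\LTp g$.

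For vanishing at a Dirichlet endpoint, say $1\notin\Omega$ (the case $-1\notin\Omega$ is symmetric), $g(1)=0$ and $\Y{LR}$ is absorbed at $1$ at a killing time $\tau$ bounded above by the real-time first crossing $\tau^Y_{(-\infty,1)}$, so it suffices to show $\tau\to 0$ in probability as $x\uparrow 1$ under $\mathbb P_x$: then $|R_\LTp g(x)|\le \|g\|_\infty \LTp^{-1}(1-\mathbb E_x[e^{-\LTp\tau}])\to 0$. The DD case is Remark \ref{rmk:Yproperties}-(iii). For ND and $\rm N^*D$, the modification at $-1$ is irrelevant before $Y$ first reaches $-1$; since for $x$ close to $1$ the process $Y$ crosses $1$ before reaching $-1$ with probability tending to $1$, and the first upward crossing time tends to $0$ as $x\uparrow 1$, we obtain the vanishing. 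The cases $-1\notin\Omega$ (DD, DN) follow symmetrically: the infimum of $Y$ hits $-1$ quickly when $x$ is close to $-1$, and the modification $\Nrone$ at the right boundary does not affect this behaviour.

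The main obstacle I anticipate is verifying, for the two-sided fast-forwarded processes $\Y{NN}$ and $\Y{N^*N}$, that the hypotheses of Corollary \ref{cor:ff_twosid} hold $\mathbb P_x$-a.s.\ simultaneously at both endpoints, together with the decoupling argument for the killing time at one endpoint in the presence of a nontrivial modification at the other. Both reduce to spectral positivity combined with the regularity identity \eqref{prop:exit_open_closed} and Remark \ref{rmk:Yproperties}-(ii), but the case-by-case bookkeeping is the most delicate part.
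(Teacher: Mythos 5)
Your treatment of the strong Markov property follows essentially the same route as the paper: reflection/fast-forwarding preserve the Hunt property (the paper cites Dynkin and \cite[Exercise V.2.11]{MR0264757}), and killing at a first exit time from an open set preserves strong Markovness (the paper cites \cite[Theorem 10.1, Chapter X]{MR0193671}). That part is fine.

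For the resolvent, you take a genuinely different route, and it has a gap. The paper never uses the Skorokhod continuity machinery of Section~\ref{sec:skor_maps} in this proposition. Instead, it first proves the exit identity \eqref{eq:exitZ}, then applies the strong Markov property at $\tau^Z_{(-1,1)}$ to derive the explicit decomposition
\[
R_\LTp^{{\rm LR}}g(x)=R_\LTp^{{\rm DD}} g(x)+ R_\LTp^{{\rm LR}}g(1)\,\mathbb E_x\!\left[e^{-\LTp\tau^{Y}_{(-\infty,1)}}\mathbf1_{\{\tau^{Y}_{(-\infty,1)}<\tau^{Y}_{(-1,\infty)}\}}\right]
+R_\LTp^{{\rm LR}} g(-1)\,\mathbb E_x\!\left[e^{-\LTp\tau^{Y}_{(-1,\infty)}}\mathbf1_{\{\tau^{Y}_{(-\infty,1)}>\tau^{Y}_{(-1,\infty)}\}}\right],
\]
and then reads off the $C_0(\Omega)$-membership directly from Remark~\ref{rmk:Yproperties}-(iii): $R_\LTp^{{\rm DD}}g\in C_0(-1,1)$ and the two two-sided exit expectations are continuous on $[-1,1]$ with the right boundary values. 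This is short, elementary, and -- crucially -- valid up to and including the endpoints of $\Omega$.

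Your CMT-based argument, by contrast, requires the Skorokhod maps to be continuous $\mathbb P_x$-a.s.\ at the limit path for \emph{every} $x\in\Omega$, including endpoints $x\in\{-1,1\}\cap\Omega$ (needed, e.g., for continuity at $x=-1$ in the ${\rm ND}$, ${\rm N^*D}$, ${\rm NN}$ cases and at $x=1$ in ${\rm DN}$, ${\rm NN}$). But Theorem~\ref{thm:ff}, Corollaries~\ref{cor:ff_b}--\ref{cor:ff_twosid} and Proposition~\ref{prop:Ypath_continuity_point} all carry hypotheses that force the starting point strictly inside: $f(0)>a$, $f(0)<b$, $f(0)\in(a,b)$, and ``$x\in(-1,1)$''. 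For $\Y{NN}$ started at $x=-1$, the two-sided fast-forwarding map is applied to paths with $Y_0=-1\notin(-1,1)$, and none of the continuity results in the paper cover this case (the proof of Theorem~\ref{thm:ff} chooses $\epsilon<f(0)-a$ in its first line). You do not flag this, and the proof of Theorem~\ref{thm:main_II} in fact relies on this proposition precisely to extend a convergence known only on a dense interior set to all of $\Omega$ -- using the CMT there too would be circular. So the boundary continuity must come from somewhere other than the Skorokhod continuity theory; the resolvent decomposition above supplies it. (Your vanishing-at-Dirichlet-endpoint argument and the ``countably many fixed jumps plus dominated convergence'' step are fine, apart from the overstatement that \emph{every} $t>0$ is a continuity time.)
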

\begin{proof} Recall that $Y$ is a Feller process and so is $\Nalone (Y)$ by \cite[Proposition VI.1]{MR1406564}. Then by Theorem \cite[Theorem 3.14, Chapter IV, page 104]{MR0193671} they are standard (or  Hunt) processes (see \cite[Definition 3.23, Chapter IV, page 104]{MR0193671}). Then, 
by \cite[Exercise V.2.11, page 212]{MR0264757} the processes $\Nlone(\Nrone(Y))$, $\Nlone    (Y)$, $\Nrone (Y)$ and $\Nrone(\Nalone (Y))$ 
   are strong Markov c\`adl\`ag processes and (the completion of) their natural filtration is right continuous. 
   We remark that for $\Nlone(\Nrone(Y))$ we used the ``two-sided fast-forwarding'' representation from Proposition \ref{prop:FF_repr_ab}. The remaining processes are obtained by killing the 
   processes $Y$, $\Nlone    (Y)$, $\Nrone (Y)$ and $\Nalone (Y)$ at the stopping time defined as their first exit from the open (in the respective state spaces) sets  $(-1,1)$, $[-1,1)$, $(-1,1]$ and $[-1,1)$, respectively. Then the resulting c\`ad\`ag processes are  strong Markov with right continuous filtration by  \cite[Theorem 10.1 and   Remarks 10.2.(1,3,4), Chapter X, page 301-302]{MR0193671}).\\  
We now simplify notation by writing $Z=\Y{LR}$.
 Then $\tau^Z_{(-1,1)}$ is a stopping time for each process $Z$ \cite[Theorem 1.6, Chapter 2]{MR838085} and  $\tau^Z_{(-1,1)}=\tau^Y_{(-1,1)}=\tau^{Y}_{[-1,1]}$ $\mathbb  P_x$-a.s. for any $x\in(-1,1)$ by Remark \ref{rmk:Yproperties}-(ii). Now we claim that for any $x\in(-1,1)$ it holds  $\mathbb P_x$-a.s. that 
 \begin{equation}\label{eq:exitZ}Z_{\tau_{(-1,1)}^Z}= 1\,\,\text{on}\,\, \big\{\tau_{[-1,1)}^Z<\tau_{(-1,1]}^Z\big\} \quad \text{and}\quad Z_{\tau_{(-1,1)}^Z}= -1\,\,\text{on}\,\, \big\{\tau_{[-1,1)}^Z>\tau_{(-1,1]}^Z\big\}
\end{equation}
for all boundary conditions (recalling that for killing we conventionally absorb at 1 or $-1$ in the natural way).   For any process and boundary that involves killing  (${\rm D}$) and reflecting  (${\rm N^*}$)  the claim is clear.   For the cases ${\rm  NR}$     on $\{\tau_{[-1,1)}^Z>\tau_{(-1,1]}^Z\}=\{\tau_{(-\infty,1)}^Y>\tau_{(-1,\infty)}^Y\}$ we have
\[
Z_{\tau_{(-1,1]}^Z}=   Y_{\AF^{-1}_Y(\tau_{(-1,\infty)}^Y)}  =-1,
\]
because $\tau_{(-1,\infty)}^Y=\AF_Y(\tau_{(-1,\infty)}^Y)$,  $\tau_{(-1,\infty)}^Y$ is a point of increase for $\AF_Y$  (by the strong Markov property and $-1$ being regular for $(-1,\infty)$) and $Y_{\tau_{(-1,\infty)}^Y}=-1$ because $Y$ is spectrally positive. For the cases ${\rm  LN}$,  on $\{\tau_{[-1,1)}^Z<\tau_{(-1,1]}^Z\}=\{\tau_{(-\infty,1)}^Y<\tau_{(-1,\infty)}^Y\}$ we have
\[
Z_{\tau_{[-1,1)}^Z} = Y_{\AF^{-1}_Y(\tau_{(-\infty,1)}^Y)}  =1, 
\]
as $\AF^{-1}_Y(\tau_{(-\infty,1)}^Y)$ is the time of first re-entry in $(-\infty,1)$ which must be at $1$ because $Y$ is spectrally positive. 
We now can show that the resolvent maps $C_0(\Omega)$ to itself, i.e. $R_\LTp^{{\rm LR}}C_0(\Omega)\subset C_0(\Omega)$ where \[
R_\LTp^{{\rm LR}}g(x)=\mathbb E_x\left[ \int_0^\infty e^{-\LTp t} g(Z_t)
\,\dd t \right], \quad \LTp>0,\,x\in [-1,1],\,g\in C_0(\Omega).
\]

Indeed by the strong Markov property and \eqref{eq:exitZ}
\begin{align*}
R_\LTp^{{\rm LR}}g(x)&=\mathbb E_x\left[ \int_0^{\tau_{(-1,1)}^Z} e^{-\LTp t} g(Z_t)
\,\dd t \right]+\mathbb E_x\left[ \int_{\tau_{(-1,1)}^Z}^\infty e^{-\LTp t} g(Z_t)
\,\dd t \right]\\
   &=R_\LTp^{{\rm DD}} g(x)+ \mathbb E_x\left[  e^{-\LTp  \tau_{(-1,1)}^Z } R_\LTp^{{\rm LR}}g(Z_{\tau_{(-1,1)}^Z}) \right]\\
      &=R_\LTp^{{\rm DD}} g(x)+R_\LTp^{{\rm LR}}g(1) \mathbb E_x\left[  e^{-\LTp  \tau_{[-1,1)}^Z }  \mathbf1_{\{\tau_{[-1,1)}^Z<\tau_{(-1,1]}^Z\}} \right]\\
      &\quad+R_\LTp^{{\rm LR}} g(-1)\mathbb E_x\left[  e^{-\LTp  \tau_{(-1,1]}^Z }  \mathbf1_{\{\tau_{[-1,1)}^Z>\tau_{(-1,1]}^Z\}} \right]\\
                &=R_\LTp^{{\rm DD}} g(x)+  R_\LTp^{{\rm LR}} g(1)\mathbb E_x\left[  e^{-\LTp  \tau^{Y}_{(-\infty,1)} }  \mathbf1_{\{\tau^{Y}_{(-\infty,1)}<\tau^{Y}_{(-1,\infty)}\}} \right]\\
                &\quad+   R_\LTp^{{\rm LR}} g(-1)   \mathbb E_x\left[  e^{-\LTp  \tau^{Y}_{(-1,\infty)} }   \mathbf1_{\{\tau^{Y}_{(-\infty,1)}>\tau^{Y}_{(-1,\infty)}\}} \right],
\end{align*}
which defines a continuous function on $C_0(\Omega)$ by Remark \ref{rmk:Yproperties}-(iii). 

\end{proof}
 
\begin{remark}\label{rmk:cont_semigroup}
By Proposition \ref{prop:cont_semigroup}, we can use the Markov property and right continuity of the paths of $\Y{LR}$ to conclude that  $t\mapsto \mathbb E_x[g(\Y{LR}_t)]$ is right continuous for each $x\in[-1,1]$ and $g\in C_0(\Omega)$.
\end{remark}

The following proposition will allow us to apply the CMT in Section \ref{sec:skor}.

 
\begin{proposition}\label{prop:Ypath_continuity_point} Assume \ref{H0} and let $x\in(-1,1)$. Then, $\mathbb P_x$-a.s. the paths of $X$ satisfy the conditions on $\dpath$ in the statement $S$, where $(X,S)$ can be:
\begin{flalign*}
&(Y,\,\text{Proposition \ref{prop:SMkill1}}), &&(\Dr(Y),\,\text{Proposition \ref{prop:SMkill-1}}), \\ & (\Nrone(Y),\,\text{Proposition \ref{prop:SMkill-1}}), && (\Nlone(Y),\,\text{Proposition \ref{prop:SMkill1}}),\\
&(\Nalone(Y),\,\text{Proposition \ref{prop:SMkill1}}), &&(\Nalone(Y),\,\text{Corollary \ref{cor:ff_b}  with }b=1),\\
&(Y,\,\text{Corollary \ref{cor:ff_b} with }b=1), &&(Y,\,\text{Corollary \ref{cor:ff_twosid} with }\{a,b\}=\{-1,1\}).
\end{flalign*}
\end{proposition}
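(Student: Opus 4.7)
The plan is to verify, for each of the eight pairs listed, that the conditions in the statement $S$ hold for the associated process $X$ almost surely under $\mathbb{P}_x$. The proof rests on three pathwise properties of $Y$ that transfer to every modified process $X$ appearing in the list: (a) spectral positivity ($Y$ has only non-negative jumps), preserved by reflection at a single barrier, killing at an upper barrier, and fast-forwarding, since each map carries jumps of $Y$ to jumps of the same sign or inserts only positive jumps at the barriers; (b) no upward creeping of $Y$ at level $1$, captured by $\tau^Y_{(-\infty,1)} = \tau^Y_{(-\infty,1]}$ in Remark \ref{rmk:Yproperties}-(ii), giving $Y_{\tau^Y_{(-\infty,1)}} > 1$ strictly on $\{\tau^Y_{(-\infty,1)} < \infty\}$; (c) continuity of the one-dimensional marginals of $Y$ under \ref{H0}, i.e., $\mathbb{P}_x[Y_t = c] = 0$ for every $c \in \R$ and $t > 0$, so by Fubini $\Leb\{t \ge 0 : Y_t = c\} = 0$ almost surely.

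For the Proposition \ref{prop:SMkill1} conditions with $X \in \{Y, \Nlone(Y), \Nalone(Y)\}$, the initial condition $X_0 = x < 1$ is immediate, and the strict overshoot $X_\tau > 1$ on $\{\tau := \tau^X_{(-\infty,1)} < \infty\}$ comes from (b), since the first upward crossing of level $1$ is a jump of $Y$ whose size is unchanged by the lower-barrier reflection or by the time-change. For the sup-condition, fix $\delta > 0$; if $\sup_{[0,\tau-\delta)} X = 1$, then c\`adl\`ag compactness on $[0,\tau-\delta]$ extracts $t_n \to t^\ast \in [0,\tau-\delta]$ with $X(t_n) \to 1$, and either an approach from the right gives $X(t^\ast) = 1$ or an approach from the left gives $X(t^\ast-) = 1$ and hence $X(t^\ast) \ge 1$ by (a); each possibility forces $t^\ast \ge \tau$, contradicting $t^\ast \le \tau - \delta$. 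The Proposition \ref{prop:SMkill-1} conditions for $X \in \{\Dr(Y), \Nrone(Y)\}$ are analogous: $X_0 > -1$ is immediate, and continuity of $t \mapsto \inf_{[s,t]} X$ reduces to the inequality $X(t_0) \ge \inf_{[s,t_0)} X$, which holds via $X(t_0) \ge X(t_0-) \ge \inf_{[s,t_0)} X$ by (a).

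For the fast-forwarding conditions in Corollaries \ref{cor:ff_b} and \ref{cor:ff_twosid} with $X \in \{Y, \Nalone(Y)\}$, conditions \ref{A1} and \ref{A2} both hold vacuously: their hypotheses would force $X(t_2-) \ge a+\epsilon$ paired with $X(t_2) \le a$, or $X(t_3-) \ge b$ paired with $X(t_3) \le b-\epsilon$, each of which is a strictly negative jump ruled out by (a). The Lebesgue-null requirement $\Leb\{t : X_t \in \{-1,1\}\} = 0$ reduces by Fubini to $\mathbb{P}_x[X_t = c] = 0$ for $c \in \{-1,1\}$ and $t > 0$; for $X = Y$ this is (c), and for $X = \Nalone(Y)$ at level $1$ a short case analysis on whether the reflector has activated writes $\{\Nalone(Y)_t = 1\}$ as a union of two events each demanding a continuous functional of the path up to time $t$ to equal a fixed value, which have probability zero by (c) and the continuous joint distribution of $(Y_t, \inf_{s \le t} Y_s)$.

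The main technical obstacle is tracking the preservation of the spectrally positive jump structure through the composition of maps and, especially, ruling out atoms at level $1$ for the reflected process $\Nalone(Y)$; once these are in hand, the remaining arguments are routine c\`adl\`ag compactness and Fubini manipulations.
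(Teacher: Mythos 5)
Your treatment of the killing conditions (Propositions \ref{prop:SMkill1} and \ref{prop:SMkill-1}) and of the vacuity of \ref{A1} and \ref{A2} for one-sided processes is correct and runs parallel to the paper's argument, just spelled out via a compactness/left-limit argument rather than the paper's direct ``negative jump'' contradiction. The genuine divergence, and the gap, is in how you handle the Lebesgue-null condition \eqref{eq:0}. You reduce $\Leb(t : X_t = c) = 0$ to the atomlessness of the fixed-time marginal $\mathbb P_x[X_t = c] = 0$ via Fubini. For $X = Y$ this works (and is a consequence of the infinite L\'evy measure under \ref{H0}, which you should cite), but for $X = \Nalone(Y)$ you invoke ``the continuous joint distribution of $(Y_t, \inf_{s\le t} Y_s)$'' to dispose of the event where the reflector has activated, which requires $\mathbb P_x[Y_t - \inf_{s\le t}Y_s = c] = 0$. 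That is a nontrivial fact about the one-sided Skorokhod reflection at a fixed time; you neither prove it nor cite it, and it is not an immediate consequence of the marginal atomlessness you established for $Y$ (note also that the pair $(Y_t,\inf_{s\le t}Y_s)$ is never jointly absolutely continuous, since $\inf_{s\le t}Y_s$ carries an atom on the event that $Y$ has not reached level $-1$). So the case $(\Nalone(Y),\text{Corollary \ref{cor:ff_b}})$ is not closed by your argument as written.

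The paper avoids this entirely by working with a \emph{pathwise} occupation-density statement: under \ref{H0} the occupation measure of $Y$ is almost surely absolutely continuous (citing \cite[Section 0]{MR958195}, since $Y$ moves downward by a drift so its range has positive Lebesgue measure on compacts). This gives $\Leb(t : Y_t = c) = 0$ for \emph{all} $c$ simultaneously on a single full-measure event, not just for a fixed $c$ after integrating out $\omega$, and this uniformity is exactly what lets it transfer to $\Nalone(Y)$: on each excursion interval of $\Nalone(Y)$ away from $-1$, the reflected path is a translate of $Y$ by the (random, constant on that interval) reflection term, so the occupation measure of $\Nalone(Y)$ on $(-1,\infty)$ is a countable sum of translated restrictions of the occupation measure of $Y$ and hence still absolutely continuous. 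Your Fubini route buys a more elementary proof for $Y$ itself at the cost of not scaling to the modified processes; the occupation-density route costs a citation up front but closes every case at once. To repair your argument, either switch to the occupation-density statement, or supply a proof or reference that the one-sided reflection $Y_t - \inf_{s\le t}\{(Y_s+1)\wedge 0\}$ has an atomless law at each $t>0$.
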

\begin{proof} Let $x<1$ be the starting point of $Y$ and simplify notation by writing $\tau=\tau^Y_{(-\infty,1)}$. Concerning $(Y,\,\text{Proposition \ref{prop:SMkill1}})$, combining  \eqref{prop:exit_open_closed} with the fact that $Y$ cannot creep upward \cite[Section 8.1]{MR2250061}, we obtain that $\mathbb P_x$-a.s.
 $Y_{\tau}>1$. Now suppose there exists a $\delta>0$ such that $\sup_{t\in[0,\tau-\delta)}Y(t)=1$ with positive probability, then there exists $\tau'<\tau$ such that $\dpath (\tau'-)=1$ and $\dpath (\tau')<1$, but this implies that $Y$ has a negative jump, which is a contradiction. The claim from the other two cases concerning Proposition \ref{prop:SMkill1} follow similarly.  Concerning Proposition \ref{prop:SMkill-1} the result follows as $Y$ moves downward continuously  and this property is preserved by $\Dr$ and $\Nrone$. The remaining cases follow by Remark \ref{rmk:recur}, the fact that the occupation measure of $Y$ allows a density ($Y$ moves downward by a drift and 0 is regular for $(-\infty,0)$, so  its range has positive Lebesgue measure on any compact time interval \cite[Section 0]{MR958195}) which implies \eqref{eq:0}, and that  \ref{A1} and \ref{A2} are always satisfied by one-sided processes. 


\end{proof}

\subsection{Gr\"unwald type processes and their boundary modifications}\label{sec:YLRh}
 In this section we identify the pathwise description of the processes with generators $\Gen^{{\rm LR}}_{-h}$ from \cite[Lemma 3.14]{BKT20} when started on a gridpoint $x\in \Gridh$, where for any $n\in\mathbb N$, we define
 \[
 \Gridh:=\{-1+hj:j=0,1,...,2/h\},\quad\text{where}\quad h=2/(n+1).
 \]
 We first recall from \cite[Proposition 3.11]{BKT20} that  $Y^h$ denotes the  compound Poisson process generated  by 
 \begin{equation}\label{eq:Gh}
\Crh f(x)=\Pi^{-1}G_h\Pi f(x)=\sum_{j=0}^{\infty} f(x+(j-1)h)\Gru_{j,h},
\end{equation}
with transition rate matrix
\begin{equation*}
G_h=
  \begin{pmatrix} 
     \ddots  & \ddots & \ddots & \ddots  &  \ddots& \ddots & \ddots  \\
     \ddots &  \Gru_{1,h} & \Gru_{2,h} & \ddots & \Gru_{n-1,h}  & \Gru_{n,h}  & \ddots   \\
     \ddots & \Gru_{0,h} & \Gru_{1,h} &    \ddots &\Gru_{n-2,h} & \Gru_{n-1,h}&  \ddots \\
     \ddots & 0&\Gru_{0,h}&\ddots    & \ddots &\ddots & \ddots   \\
    \ddots  & \ddots&\ddots&\ddots&\Gru_{1,h}&\Gru_{2,h}& \ddots  \\
    \ddots & 0 &\ddots&0&\Gru_{0,h}&\Gru_{1,h}&  \ddots \\
    \ddots  & \ddots  & \ddots & \ddots  &\ddots & \ddots& \ddots 
  \end{pmatrix},
\end{equation*}
where $\{\Gru_{j,h}:j\in\mathbb N\}$ is the set of coefficients determined by $\sym((1-\xi)/h)=\sum_{j=0}^\infty \Gru_{j,h} \xi^j$ for $|\xi|<1$. We recall from  \cite[Lemma 3.3]{BKT20} that
\begin{equation}\label{eq:26}
\text{$-\Gru_{1,h},\Gru_{0,h},\Gru_{j,h}>0$, for $j\ge 2$, and $-\Gru_{1,h}=\sum_{j\neq 1}\Gru_{j,h}$,}
\end{equation}
and that we set  $\Gru_{-j,h}=0$ for $j\in\mathbb N$. We refer to \cite[Sections 3.1 and 3.3]{BKT20} for an in depth discussion of these coefficients and their properties. 
\begin{remark}[Convention]
From now on, to simplify notation, we drop the $h$ subscript by writing $\Gru_j=\Gru_{j,h}$.  
\end{remark} 
We will apply the maps of Section \ref{sec:skor_maps} to $Y^h$ with $Y^h_0\in \Gridh$ and show that the transition matrix in each case is 
\begin{equation}\label{Glrn+2}
G_{n+2}^{\mathrm{LR}}=
 \begin{pmatrix} 
      0& 0&0&0&0&0&0\\
        {d_0^l}& b_1^l & b_2^l & \cdots & b_{n-1}^l & b_n  &  {d_{n+1}^r}  \\
   0& \Gru_0 & \Gru_1 &    \cdots &\Gru_{n-2} & b_{n-1}^r  & {d_{n}^r} \\
   0& 0& \Gru_0&\ddots &\vdots    & \vdots &\vdots   \\
   \vdots&  \vdots&\ddots&\ddots&\Gru_1&b_2^r&  {d_{3}^r} \\
  0&  0 &\cdots&0&\Gru_0& b_1^r& d_{2}^r\\
    0& 0&0&0&0&0&0\\
  \end{pmatrix}.
\end{equation}
with the coefficients $\{b_j^l,b_j^r, b_n: j=1,2,...,n-1\}$  determined for each boundary condition in Table \ref{mainTable}. Note that \eqref{Glrn+2} equals \cite[Eq. (40)]{BKT20} with the addition of absorbing   Dirichlet boundary conditions at $-1$ and $1$.

\begin{table}[h]
\centering
\begin{tabular}{|c|c|c|c|}
  \hline
  Case  & Rates $b^{l,r},\, d^{l,r} $   \\ 
  \hline 
  $\mathrm{DR}$ & $b_i^l=\Gru_i$,\,\, $d_{0}^l= \Gru_0$ \\
  $\mathrm{NR}$ & $b_i^l=-\sum_{j=0}^{i-1}\Gru_{j}$,\,\, $d_{0}^l= 0$ \\
  $\mathrm{N^*R}$ & $b_1^l=\sum_{j=0}^1\Gru_{j},\,\, b_i^l=\Gru_i,i\ge2$,\,\,  $d_{0}^l= 0$\\
  $\mathrm{LD}$ & $b_i^r=\Gru_i$,\,\, $b_n=b_n^l$,\,\, $d_{i}^r= \sum_{j=i}^\infty \Gru_{j}$ \\
  $\mathrm{LN}$ & $b_i^r=-\sum_{j=0}^{i-1}\Gru_{j},\,\, b_n=-\sum_{j=0}^{n-1}b_j^l$,\,\, $d_{i}^r= 0$   \\
\hline
\end{tabular}
\caption{\label{mainTable}Table of boundary weights used to build the transition rate matrix \eqref{Glrn+2}, assuming  $b_0^l=0$ for $\mathrm{NR}$ and $\mathrm{N^*R}$, and with the exception that in the case $\mathrm{ND}$ we set $d^r_{n+1} = \sum_{j=n+1}^\infty b^l_j$.}
\end{table}

We will use the next proposition and the remark afterward to guarantee the fast-forwarding maps are well-defined.
 \begin{proposition}\label{prop:recur} Assuming \ref{H0}, 
 the process  $Y$ is recurrent, and so is $Y^h$ for every $h>0$.
 \end{proposition}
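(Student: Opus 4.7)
The approach is to reduce recurrence of both $Y$ and $Y^h$ to the identity $\psi'(0+)=0$ coming from \eqref{eq:convhomega}, which is a zero-mean condition, and then invoke classical recurrence criteria in dimension one.

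For $Y$ itself I would argue as follows. Assumption \ref{H0} ensures $Y$ is a spectrally positive Lévy process with $\int_{(1,\infty)} y\,\phi(\dd y)<\infty$, so the first moment $\mathbb E_0[Y_1]$ is finite and equals $-\psi'(0+)$. By \eqref{eq:convhomega} we have $\psi'(0+)=0$, hence $\mathbb E_0[Y_1]=0$. The classical trichotomy for spectrally positive Lévy processes (see, e.g., \cite[Corollary VII.2]{MR1406564} or \cite[Theorem 7.1]{MR2250061}) then forces $Y$ to oscillate, i.e. $\limsup_t Y_t = +\infty$ and $\liminf_t Y_t = -\infty$ almost surely. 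Because \ref{H0} makes $\phi$ infinite (indeed $\int_{(0,1)} y\,\phi(\dd y)=\infty$), $Y$ is not compound Poisson and in particular has unbounded variation paths, so by \cite[Theorem 37.5 and Proposition VI.8]{MR1406564} oscillation is equivalent to neighbourhood recurrence. This gives recurrence of $Y$.

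For $Y^h$ the process is compound Poisson by \eqref{eq:26}, with total jump rate $-\Gru_{1}<\infty$ and embedded random walk $(S_n)_{n\ge 0}$ whose step distribution is
\[
\mathbb P[\xi = (j-1)h] = \frac{\Gru_j}{-\Gru_1}, \qquad j\in\{0\}\cup\{2,3,\ldots\}.
\]
From the generating function identity $\psi\big((1-z)/h\big)=\sum_{j=0}^\infty \Gru_j z^j$, valid for $|z|<1$, I would differentiate to get $-\frac{1}{h}\psi'\big((1-z)/h\big)=\sum_{j=1}^\infty j\Gru_j z^{j-1}$. Letting $z\uparrow 1$: the left-hand side tends to $0$ by \eqref{eq:convhomega}, while on the right the non-negative tail $\sum_{j\ge 2} j\Gru_j z^{j-1}$ converges to $\sum_{j\ge 2} j\Gru_j$ by monotone convergence (Abel's theorem). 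Hence $\sum_{j\ge 2}j\Gru_j = -\Gru_1\in(0,\infty)$, which combined with $\sum_{j\ne 1}\Gru_j = -\Gru_1$ (from \eqref{eq:26}) gives $\sum_{j\ge 2}(j-1)\Gru_j=\Gru_0$ and therefore
\[
\mathbb E[\xi] = \frac{h}{-\Gru_1}\Big(-\Gru_0 + \sum_{j\ge 2}(j-1)\Gru_j\Big)=0,\qquad \mathbb E|\xi|=\frac{2h\,\Gru_0}{-\Gru_1}<\infty.
\]
So $(S_n)$ is an integrable, mean-zero random walk on $\R$ and is recurrent by the Chung–Fuchs weak-law criterion; since $Y^h_t=S_{N_t}$ for an independent Poisson process $N$, recurrence transfers to $Y^h$.

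The main obstacle is the boundary differentiation at $z=1$ of the generating series $\sum \Gru_j z^j$ (the radius of convergence is exactly $1$): the coefficients are not of fixed sign, so standard termwise differentiation is not directly legitimate. The fix is to split off the single negative coefficient $\Gru_1$ and use Abel's theorem (monotone convergence) on the non-negative tail $j\ge 2$, with the right-hand side handled by the already-established \eqref{eq:convhomega}; no additional regularity of $\psi$ is needed.
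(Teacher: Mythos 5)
Your strategy—reduce everything to a zero-mean condition coming from $\sym'(0+)=0$—is exactly the paper's, but your execution has one genuine gap on the $Y$ side and one genuinely different (and valid) route on the $Y^h$ side.

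\medskip

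\textbf{The gap for $Y$.} After establishing $\mathbb E_0[Y_1]=0$ and oscillation, you conclude recurrence via ``$Y$ has unbounded variation paths, so [\ldots] oscillation is equivalent to neighbourhood recurrence.'' That implication is \emph{false} for general L\'evy processes of unbounded variation: take $X=B+S$ with $B$ standard Brownian motion and $S$ an independent symmetric $\alpha$-stable process with $\alpha\in(0,1)$. Then $X$ has unbounded variation paths (the Brownian part), oscillates by symmetry, yet is transient because $\Re(1/\Psi_X(\xi))\sim c|\xi|^{-\alpha}$ near $0$ is integrable. Also, neither of the results you cite (``Theorem 37.5'' and ``Proposition VI.8'' of \cite{MR1406564}) is numbered that way in Bertoin's book, and neither states what you need. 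The intended fact — that for a L\'evy process with finite first moment, mean zero is \emph{equivalent} to recurrence — is precisely \cite[Theorem 36.7]{MR1739520}, which is what the paper invokes directly. You already computed the mean to be zero; the detour through oscillation is unnecessary, and patching it would require appealing not to unbounded variation but to the one-sidedness of jumps (a spectrally positive process moving downward continuously must hit $0$ if it oscillates), which you never invoke.

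\medskip

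\textbf{$Y^h$: a different but correct route.} Your argument for $Y^h$ is sound and diverges from the paper's. You explicitly identify the step distribution of the embedded random walk, compute $\sum_{j\ge2}j\Gru_j=-\Gru_1$ by splitting off the single negative coefficient and applying Abel/monotone convergence on the non-negative tail (using $\sym'(0+)=0$ to evaluate the boundary limit), deduce $\mathbb E[\xi]=0$ and $\mathbb E|\xi|<\infty$, and then invoke Chung--Fuchs for the random walk and transfer recurrence along the independent Poisson clock. The paper instead differentiates the characteristic function of $Y^h$ (of the form $e^{ixh}\sym((1-e^{-ixh})/h)$) at $x=0$ and feeds the resulting zero mean into the same Sato criterion. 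Your route is more hands-on and exposes the combinatorial identity on the $\Gru_j$, which is mildly illuminating; the paper's is shorter and unifies the two cases under a single criterion. Either is fine. If you adopt the Sato criterion for $Y$ as suggested above, using it for $Y^h$ as well (as the paper does) makes the whole proof a three-line computation, but your random-walk argument is a legitimate alternative.
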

 \begin{proof}
By \cite[Theorem 25.3,  page 159]{MR1739520} and  \ref{H0} the process $Y$ has finite first moment, and so does $Y^h$ (due to its finite L\'evy measure). By \cite[Theorem 36.7,   page 248]{MR1739520} it is enough to prove that $\mathbb E_0[Y_1]=0$ and $\mathbb E_0[Y^h_1]=0$. Recalling that the Laplace exponents of $Y$ and $Y^h$ are respectively $\sym(ix)$ and $e^{ixh}\sym((1-e^{-ixh})/h)$  (see \cite[Remark 3.2.ii]{BKT20}),  we obtain
\[
\mathbb E_0[Y_1]=\lim_{x\to 0}\frac{\dd}{\dd x}i \mathbb E_0\big[e^{-ixY_1}\big]=\lim_{x\to 0}\frac{\dd}{\dd x}i e^{\sym(ix)}=\lim_{x\to 0} i^2\sym'(ix) e^{\sym(ix)} =0,
\] 
 by \eqref{eq:convhomega}, and similarly
 \[
\mathbb E_0[Y_1^h]=\lim_{x\to 0} \big(he^{ixh}\sym((1-e^{-ixh})/h)+\sym'((1-e^{-ixh})/h) \big)i^2\mathbb E_0\big[e^{-ixY^h_1}\big] =0.
\] 

  \end{proof} 
 \begin{remark}\label{rmk:recur}
 Proposition \ref{prop:recur} with \cite[Theorem 35.4.iii, page 239]{MR1739520} implies that for any $x\in \mathbb R$, $\mathbb P_x$-a.s., $Y$, $Y^h\in D_{a,b}([0,\infty),\mathbb R)$ for any $-\infty\le a<b\le\infty$ (for all $h$ small),   and using $\lim\inf_t Y_t=\lim\inf_t Y_t^h=-\infty$ $\mathbb P_x$-a.s. by \cite[Proposition 37.10.3, page 255]{MR1739520}, we obtain that, $\mathbb P_x$-a.s., $\Nalone(Y)$, $\Nal{h-1}(Y^h)\in D_{-\infty,1}([0,\infty),\mathbb R)$. 
 \end{remark}
 \begin{table}[h]
\centering
\vline
\begin{tabular}{l|c|c}
  \hline
\hspace{1cm} Process $\Yh{LR}$  
\hspace{.1cm} & \hspace{.1cm}  Rate matrix $G_{n+2}^{{\rm LR}}$ 
\hspace{.1cm}\\
	\hline
	1.  $ Y^{{\rm DD},h}= \Dr(\Dl(Y^h))$   & $G_{n+2}^{{\rm DD}}$    \\
  \hline
2.	$  Y^{{\rm DN},h}=  \Dl (\Nrone(Y^h))$ & $G_{n+2}^{{\rm DN}}$   \\
  \hline
3.	 $  Y^{{\rm ND},h}=  \Dr(\Nlone(Y^h))$ &$G_{n+2}^{{\rm ND}}$   \\
  \hline
4.	$  Y^{{\rm NN},h}=  \Nrone(\Nlone(Y^h))$  & $G_{n+2}^{{\rm NN}}$  \\
  \hline
5.	$  Y^{{\rm N^*D},h}=  \Dr(\Nal{h-1}(Y^h))$  & $G_{n+2}^{{\rm N^*D}}$   \\
  \hline
6.	 $  Y^{{\rm N^*N},h}=  \Nrone(\Nal{h-1}(Y^h))$  &$G_{n+2}^{{\rm N^*N}}$    \\
  \hline 
\end{tabular}\vline
\caption{\label{tab:cmt_h}Pathwise representation  $\Yh{LR}$ of the processes generated by the interpolated  matrices $\Gen^{\BC}_{-h}$ on gridpoints $\Gridh$ from \cite[Lemma 3.14]{BKT20}. In the second column we listed the associated transition rate matrix as defined in \eqref{Glrn+2}.}
\end{table}  
 
With the help of Section \ref{subsec:ff} we prove below the desired pathwise representation on gridpoints $\Gridh$ to the Feller processes constructed in Part I in \cite[Lemma 3.14]{BKT20}.
\begin{proposition}\label{prop:Yh_eq_Gh} Assuming \ref{H0},  for any $n\in\mathbb N$ and letting $h=2/(n+1)$, the Markov process taking values in $ \Gridh$ with transition rate matrix $G_{n+2}^{\BC}$  is $\Yh{LR}$ given  in Table \ref{tab:cmt_h}. 
\end{proposition}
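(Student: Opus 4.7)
My plan is to verify, for each of the six pathwise constructions $\Yh{LR}$ in Table \ref{tab:cmt_h}, that the resulting process is a $\Gridh$-valued discrete-space continuous-time Markov chain whose transition rate matrix coincides with $G_{n+2}^{\BC}$. By the characterisation recalled in Section \ref{subsec:SP}, once such a chain is shown to have bounded rates, identifying the off-diagonal rates $Q_{ij}=R_iP_{ij}$ row by row suffices. So the task splits into (i) verifying that each $\Yh{LR}$ is a $\Gridh$-valued strong Markov chain with bounded rates, and (ii) matching the off-diagonal rates to those of $G_{n+2}^{\BC}$.

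For step (i) I would replicate the argument of Proposition \ref{prop:cont_semigroup} at the discrete level. The compound Poisson process $Y^h$ is Feller, hence standard/Hunt; the maps $\Nal{h-1}$, $\Nlone$, $\Nrone$ preserve the standard-process property via \cite[Exercise V.2.11]{MR0264757}, and absorption at the first exit from an open set preserves strong Markovity via \cite[Chapter X]{MR0193671}. All three boundary maps send lattice paths to lattice paths — reflection at $h-1$ and absorption at $\pm1$ occur at gridpoints, while fast-forwarding is a pure time change — so each $\Yh{LR}$ stays in $\Gridh$, and the rates are uniformly bounded since $-\Gru_1<\infty$.

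For step (ii) I would treat interior and boundary rows separately. From any interior source $-1+ih$ with $2\le i\le n-1$ the boundary modifications alter only those jumps that would land at or beyond $\pm1$, so the inner off-diagonal entries reproduce the Gr\"unwald rates $\Gru_{k-i+1}$ unchanged. At the right boundary, case LD simply aggregates every $\Gru_j$ with $i+j-1\ge n+1$ into mass at $1$, giving $d^r_{n+2-i}=\sum_{j\ge n+2-i}\Gru_j$; case LN is equally explicit because the only downward step of $Y^h$ is $-h$, so from any state above $1$ the process must re-enter $(-\infty,1)$ precisely at $1-h$, collapsing all exiting mass onto $1-h$ and yielding $b^r_{n+1-i}=\sum_{j\ge n+1-i}\Gru_j$. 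At the left boundary, DR is trivial (only $-1+h$ can exit, at rate $\Gru_0$ to $-1$), and N*R simply absorbs the $\Gru_0$ self-jump from $-1+h$ into the diagonal, producing $b^l_1=\Gru_0+\Gru_1$ with all other entries unchanged.

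The main obstacle is the left-fast-forwarding case (NR), appearing in $\Yh{ND}$ and $\Yh{NN}$. Here the first re-entry of $Y^h$ into $(-1,\infty)$ starting from $-1$ is spread across every gridpoint $-1+ih$, $i\ge 1$, and I would invoke Theorem \ref{thm:ffwaitYh}, which computes the transition rates of $\Nlone(Y^h)$ directly. The essential content of that theorem (as I would reconstruct it) is that if $q_i:=\mathbb{P}_{-1}[Y^h$ first enters $(-1,\infty)$ at $-1+ih]$, then $\Gru_0 q_i=\sum_{j\ge i+1}\Gru_j$ for every $i\ge 1$, obtained via the ergodic resolvent identity \eqref{eq:ergodic_stroock} combined with the recurrence of $Y^h$ (Proposition \ref{prop:recur}) and the one-sided lattice structure (the unique downward step being $-h$). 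This produces the fast-forwarded rate $\Gru_i+\Gru_0 q_i=\sum_{j\ge i}\Gru_j=b^l_i$ from $-1+h$ to $-1+ih$ for $i\ge 2$ and total outgoing rate $\Gru_0=-b^l_1$, with zero mass sent to $-1$, as prescribed by Table \ref{mainTable}. Combining NR with LD or LN at the right (observing, for LN, that any $q_k$-mass landing at $\ge 1$ is further collapsed onto $1-h$ by the right fast-forwarding) and verifying each row sums to zero via $\sum_j\Gru_j=\sum_j j\Gru_j=0$ (a consequence of \eqref{eq:convhomega}) then yields all six rate matrices of Table \ref{tab:cmt_h}, completing the verification.
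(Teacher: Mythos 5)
Your proposal takes essentially the same route as the paper: show each $\Yh{LR}$ is a $\Gridh$-valued continuous-time Markov chain and then identify the off-diagonal rates row by row, with the interior rows immediate and the only genuinely non-elementary ingredient being the fast-forwarded boundary rates from Theorem~\ref{thm:ffwaitYh}. Your explicit step (i) is a sensible scaffolding that the paper leaves implicit.

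The one place where your argument is too thin is the exponential holding time at the near-boundary gridpoint, which is precisely the content the paper defers to Propositions~\ref{prop:NawaitYh} and~\ref{prop:waitFF}. For $\mathrm{N^*R}$ your phrase ``absorbs the $\Gru_0$ self-jump into the diagonal'' suggests the reflected chain makes a single downward step and returns; in fact $\Nal{h-1}(Y^h)$ can remain at $h-1$ through arbitrarily many consecutive $-h$ jumps of the underlying $Y^h$ before a positive jump moves it, and showing that this geometric compound of $\mathrm{Exp}(-\Gru_1)$ waits is again exponential with rate $-(\Gru_0+\Gru_1)$ is exactly what Proposition~\ref{prop:NawaitYh} does. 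Similarly for $\mathrm{LN}$ you identify $1-2h$ as the unique landing state from $1-h$, but the rate $\Gru_0$ — i.e.\ that the holding time is $\mathrm{Exp}(\Gru_0)$ in the fast-forwarded clock despite the (deleted-time) excursions $\ge 1$ and their geometric return count — requires the $\Nrone$ half of Theorem~\ref{thm:ffwaitYh}, which rests on Proposition~\ref{prop:waitFF} and recurrence. Both gaps close by a one-line citation of those statements, which is exactly how the paper's proof handles it (``\ldots are proved in Theorem~\ref{thm:ffwaitYh} and Proposition~\ref{prop:NawaitYh}'').
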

\begin{proof} Recall that all processes are absorbed once they visit 1 or $-1$, which justifies the first and the last row. It is immediate to observe that for all cases the rate of leaving a point  in $\Gridh\backslash\{-1,h-1,1-h,1\}$  remains $-\Gru_1$, and this holds also for  $\Yh{DR}$ leaving $h-1$ and  $\Yh{LD}$ leaving $1-h$. Also the rates of landing on $\Gridh\backslash\{-1,1-h,1\}$ from $\Gridh\backslash\{-1,h-1,1-h,1\}$ are clear. The rates for  $\Yh{NR}$ and $\Yh{N^*R}$ leaving $h-1$ and  $\Yh{LN}$ leaving $1-h$ are proved in Theorem \ref{thm:ffwaitYh} and Proposition \ref{prop:NawaitYh}.  The first (last) column in $\Gen^{{\rm LR}}_{n+2}$  is clear as   $\Dl$ ($\Dr$)
allows to reach $-1$ (1) and does so by accumulating the intensities of all the jumps less (greater) or equal than $-1$ (1), meanwhile the other maps never allow to reach $\{-1,1\}$. Note that the map $\Nrone$  accumulates the intensities of jumps of $Y^h$ to a state greater or equal to $1-h$ on $1-h$, by Proposition \ref{prop:recur}. 
The coefficients for the second row of $\Gen^{{\rm NR}}_{n+2}$ and the second last row of  $\Gen^{{\rm LN}}_{n+2}$ are obtained immediately from  Theorem \ref{thm:ffwaitYh}. The last $n$ coefficients for the second row of $\Gen^{{\rm N^*R}}_{n+2}$ are clear and we are done.



\end{proof}

\begin{proposition}\label{prop:NawaitYh}
For any $h>0$, if \ref{H0} holds and  $Y^h_0=0$, then the first jump of  $\Nal{0}(Y^h)$ and of  $\Nar{0}(Y^h)$  are  exponentially distributed with rate $-(\Gru_1+\Gru_0)$ and $\Gru_0$, respectively. 
\end{proposition}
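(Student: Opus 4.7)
The plan is to exploit the compound Poisson structure of $Y^h$ and decompose its jumps by a thinning argument. From any state, $Y^h$ waits an Exp$(-\Gru_1)$ time and then jumps; by \eqref{eq:Gh} and \eqref{eq:26} the jump of size $-h$ has rate $\Gru_0$ and the jumps of sizes $+h,+2h,\ldots$ have rates $\Gru_2,\Gru_3,\ldots$. Poisson thinning therefore realizes the jumps of $Y^h$ as the superposition of two independent Poisson streams: a ``down stream'' of rate $\Gru_0$ consisting of size $-h$ jumps, and an ``up stream'' of total rate $\sum_{j\ge 2}\Gru_j = -\Gru_1-\Gru_0$ consisting of the positive jumps. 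Since the two streams are independent, the first arrival of the down (resp.\ up) stream is Exp$(\Gru_0)$ (resp.\ Exp$(-\Gru_1-\Gru_0)$), so the whole proposition reduces to showing that the first jump of $\Nar{0}(Y^h)$ (resp.\ $\Nal{0}(Y^h)$) coincides with the first down (resp.\ up) jump of $Y^h$.

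Consider $\Nar{0}(Y^h)$ and write it as $Y^h(t)-M(t)$ with $M$ nondecreasing and $M(0)=0$ (the standard Skorokhod decomposition implicit in Definition \ref{def:refl}). Since $Y^h_0=0$, the reflected process starts at $0$. I would argue by a case analysis at the first jump time $\tau_1$ of $Y^h$: if $\tau_1$ belongs to the up stream with increment $jh$, $j\ge 1$, then $M$ jumps by $jh$ and the reflected process remains at $0$, so this is not a jump of $\Nar{0}(Y^h)$; if $\tau_1$ belongs to the down stream, then $Y^h$ does not reach a new maximum, $M$ does not change, and the reflected process jumps from $0$ to $-h$. The same analysis applies at each subsequent jump time provided the reflected process is still at $0$, which is preserved by induction across consecutive up jumps. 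By the strong Markov property of $Y^h$ and the independence of the two streams, the waiting time for the first down arrival is unaffected by any intervening up jumps, giving Exp$(\Gru_0)$.

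For $\Nal{0}(Y^h)$, write it as $Y^h(t)+L(t)$ with $L$ nondecreasing, $L(0)=0$. A symmetric case analysis at each jump time of $Y^h$ while the reflected process sits at $0$ shows: a down jump of size $-h$ is absorbed by an equal-and-opposite increment of $L$ (since otherwise $\Nal{0}(Y^h)$ would drop below $0$), keeping the reflected process at $0$ and $Y^h$ at its current minimum; an up jump of size $jh$, $j\ge 1$, is transmitted unchanged to the reflected process, which then jumps to $jh>0$. Iterating over the successive down jumps (each of which leaves $\Nal{0}(Y^h)$ at $0$) and invoking the strong Markov property together with thinning, the first jump of $\Nal{0}(Y^h)$ coincides with the first arrival of the up stream and is therefore Exp$(-\Gru_1-\Gru_0)$.

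The only real bookkeeping concern is verifying that the condition ``$\Nal{0}(Y^h)=0$ just before the jump'' (respectively ``$\Nar{0}(Y^h)=0$'') genuinely persists through an arbitrary number of consecutive down (resp.\ up) jumps of $Y^h$; this is immediate from the minimal pushing characterization of $L$ and $M$ in Definition \ref{def:refl}, since between any two jump times of $Y^h$ both $L$ and $M$ are constant. Modulo this observation, the proof is essentially a direct application of Poisson thinning.
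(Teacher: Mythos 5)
Your proof is correct and takes essentially the same route as the paper. Both arguments hinge on the observation that, starting from $0$, the reflected process $\Nar{0}(Y^h)$ (resp.\ $\Nal{0}(Y^h)$) first moves exactly at the first downward (resp.\ upward) jump of $Y^h$; you extract the exponential waiting time via Poisson thinning into independent up/down streams, whereas the paper conditions on the Poisson number of jumps up to time $t$ and sums the resulting series using \eqref{eq:26} — the same computation in a different dress.
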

\begin{proof} 
 Denote by $J_m$ the time of the $m$-th jump of $Y^h$ with the convention $J_0=J_{-1}=0$ and compute using standard arguments    and \eqref{eq:26},
\begin{align*}
&\,\mathbb P_0[\Nar{0}(Y^h)_s =0\,\, \text{for all}\, s\le t]\\
=&\,e^{\Gru_1 t} \sum_{n=0}^\infty \frac{(-\Gru_1 t)^n}{n!} \mathbb P_0[Y^h_{J_m}- Y^h_{J_{m-1}}\ge 0\, \,\text{for all}\, 0\le m\le n ]\\
=&\,e^{\Gru_1 t} \sum_{n=0}^\infty \frac{(-\Gru_1 t)^n}{n!}\left( \frac{\sum_{m=2}^\infty \Gru_m}{-\Gru_1}\right)^n= e^{-\Gru_0 t}.
\end{align*}
A similar computation using $\mathbb P_0[Y^h_{J_m}- Y^h_{J_{m-1}}\le 0\,\, \text{for all}\, 0\le m\le n ]=(-\Gru_0/\Gru_1)^n$ yields
$\mathbb P_0[\Nal{0} (Y^h)_s =0\,\, \text{for all}\, s\le t] = e^{(\Gru_1 +\Gru_0) t}.$
\end{proof}

\subsubsection{Details for fast-forwarding}\label{subsec:ff}

We derive the waiting times and the distribution of the jumps of fast-forwarding  $Y^h$ at a boundary. We use the auxiliary process $Y^{h,{\rm stop}}$, which is defined as $Y^h$ stopped the first time it visits $h\mathbb N$. Then, for each $h>0$, the transition rate matrix of $Y^{h,{\rm stop}}$ is 
 \begin{equation}\label{eq:Gstop}
 G_{\mathrm{stop}}=(g_{i,j})_{i,j\in\mathbb Z}\quad\mbox{ with }\quad g_{i,j}=\begin{cases}\Gru_{j-i+1},&i\le 0,\\0,&i> 0.\end{cases}
\end{equation}   

\begin{theorem}\label{thm:ffwaitYh} Assume \ref{H0}. Then the process $\Nlone(Y^h)$ with   $Y_0^h=-1+h$ jumps the first time according to an exponential waiting time with rate $\Gru_0$, and it lands on $-1+(1+j)h$ with probability $-(\sum_{i=0}^j\Gru_j)/\Gru_0$ for each $j\ge1$.  The process $\Nrone(Y^h)$ with   $Y_0^h=1-h$ jumps the first time according to an exponential waiting time with rate $\Gru_0$, and it lands on $1-2h$. 
\end{theorem}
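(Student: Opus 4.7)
I will handle $\Nrone(Y^h)$ and $\Nlone(Y^h)$ separately. The $\Nrone$ claim is essentially immediate from the skip-free-down structure of $Y^h$, while the $\Nlone$ claim requires a strong-Markov decomposition of the first jump that reduces the question to computing the absorption distribution of the auxiliary process $Y^{h,\mathrm{stop}}$ introduced in \eqref{eq:Gstop}, which I will evaluate via the ergodic formula \eqref{eq:ergodic_stroock}.

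For $\Nrone(Y^h)$ started at $1-h$: since $Y^h$'s only downward jump has size $-h$ (at rate $\Gru_0$), any excursion of $Y^h$ from $1-h$ into $[1,\infty)$ must re-enter $(-\infty,1)$ precisely at $1-h$, hence is invisible to $\Nrone$. By the strong Markov property the cycles ``wait $\mathrm{Exp}(-\Gru_1)$, then with probability $\Gru_0/(-\Gru_1)$ jump down to $1-2h$ (visible), else excurse invisibly above $1$'' are i.i.d., and a geometric renewal yields a first visible jump at time $\mathrm{Exp}(\Gru_0)$ landing at $1-2h$.

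For $\Nlone(Y^h)$ started at $-1+h$, shift $\tilde Y:=Y^h+1$ so the barrier sits at $0$ and $\tilde Y_0=h$. Analysis of the first jump of $\tilde Y$ by strong Markov gives either (a) a direct upward jump to $\tilde Y=kh$ for some $k\ge 2$ at rate $\Gru_k$ (visible), or (b) a jump to $\tilde Y=0$ at rate $\Gru_0$ triggering fast-forwarding until $\tilde Y$ first re-enters $\{h,2h,\ldots\}$. By strong Markov at the moment $\tilde Y$ hits $0$, the re-entry distribution is exactly the absorption distribution $q_k:=\mathbb{P}_0[Y^{h,\mathrm{stop}}_\infty=kh]$, $k\ge 1$, of the process defined by \eqref{eq:Gstop}. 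Re-entry at $h$ is an invisible restart, whereas re-entry at $kh$ for $k\ge 2$ is visible. A second geometric renewal then gives first-visible-jump rate $-\Gru_1-\Gru_0 q_1$ and conditional landing at $-1+(1+j)h$ (i.e.\ $k=j+1$) with probability proportional to $\Gru_{j+1}+\Gru_0 q_{j+1}$. The theorem therefore reduces to the identity
\[
q_k=\sum_{i\ge k+1}\Gru_i/\Gru_0,\qquad k\ge 1,
\]
since then $-\Gru_1-\Gru_0 q_1=\Gru_0$ and the conditional landing probability equals $\sum_{i\ge j+1}\Gru_i/\Gru_0=-\sum_{i=0}^{j}\Gru_i/\Gru_0$.

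To prove the identity, apply \eqref{eq:ergodic_stroock} to $Y^{h,\mathrm{stop}}$: $q_k=\lim_{\LTp\downarrow 0}\LTp\, u_k(\LTp)$ with $u(\LTp)=(\LTp I-G_{\mathrm{stop}}^{T})^{-1}\vec e_0$. Writing the resolvent equation out using \eqref{eq:Gstop} decouples the system into a half-line convolution equation $\LTp u_{-m}(\LTp)=\sum_{k\ge 0}\Gru_k u_{-m-k+1}(\LTp)$ for $m\ge 1$, one inhomogeneous equation at $m=0$ carrying the ``$+1$'' from $\vec e_0$, and explicit algebraic formulas for $u_i(\LTp)$ at $i\ge 1$. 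Setting $v_m:=\lim_{\LTp\downarrow 0}u_{-m}(\LTp)$, the limit system $\sum_{k\ge 0}\Gru_k v_{k+p}=0$ for all $p\ge 0$ together with $\sum_{m\ge 0}\Gru_{m+1}v_m=-1$ is solved by the constant $v_m=1/\Gru_0$, using $\sum_k\Gru_k=\sum_k k\Gru_k=0$ (equivalently the double zero of $\sym$ at the origin from \eqref{eq:convhomega} and Proposition \ref{prop:recur}) together with the normalising identity $\sum_{m\ge 1}\Gru_m=-\Gru_0$. Substituting back at $i\ge 1$ delivers $q_k=\sum_{j\le 0}\Gru_{k-j+1}/\Gru_0=\sum_{i\ge k+1}\Gru_i/\Gru_0$, as required. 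The hardest step is justifying the Abelian limit $\LTp\downarrow 0$ and the uniqueness of the constant bounded solution to the half-line convolution equation; both rest on the double zero of $\sym$ at the origin.
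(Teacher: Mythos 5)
Your structural decomposition is essentially the paper's: you reduce everything to the absorption distribution $q_k:=\mathbb P_0[Y^{h,\mathrm{stop}}_\infty = kh]$ of the stopped process \eqref{eq:Gstop} and invoke the ergodic formula \eqref{eq:ergodic_stroock}. The $\Nrone$ analysis via geometric compounding is the content of Proposition \ref{prop:waitFF}, and your one-shot strong-Markov decomposition at the hit of $0$ is a mildly cleaner packaging of what the paper does across Proposition \ref{prop:waitFF} (waiting time via return probability $\mathbb P_0[\sigma^{(1)}<\infty]$, which is itself expressed through $q_1$) and Theorem \ref{thm:ffwaitYh} (landing at the first entry point of $h\mathbb N$). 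All the algebra you state, e.g.\ $-\Gru_1-\Gru_0 q_1 = \Gru_0$ and $(\Gru_{j+1}+\Gru_0 q_{j+1})/\Gru_0 = -\sum_{i=0}^j\Gru_i/\Gru_0$, is correct granted the formula for $q_k$.

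The gap is precisely at the computation of $q_k = -\Gru_0^{-1}\sum_{i=0}^k\Gru_i$. You correctly write down the half-line system $\LTp u_{-m} = \sum_{k\ge 0}\Gru_k u_{-m-k+1}$ ($m\ge 1$) plus the inhomogeneous equation at $m=0$, observe that the constant $v_m\equiv 1/\Gru_0$ solves the formal $\LTp\downarrow 0$ limit, and then acknowledge that the ``Abelian limit'' and the uniqueness of the bounded solution to the half-line convolution equation are the hard part, without proving either. These are genuinely nontrivial: the convolution operator on the half-line is of Wiener--Hopf type, and neither convergence of $u_{-m}(\LTp)$ nor uniqueness among bounded solutions follows merely from the double zero of $\sym$ at the origin. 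The paper avoids both issues entirely by exhibiting the resolvent $\left((\LTp I - G_{\mathrm{stop}}^T)^{-1}\vec e_0\right)_n = \vec y_n/\Gru_0$ in closed form, with $\vec y_n = e^{h(n-1)\varphi^{-1}(\LTp)}$ for $n\le 0$ and $\vec y_n = \LTp^{-1}\sum_{k\ge n}\Gru_{k+1}e^{h(n-1-k)\varphi^{-1}(\LTp)}$ for $n>0$, where $\varphi(\LTp) = e^{h\LTp}\sym((1-e^{-h\LTp})/h)$. Verifying this identity row by row is an elementary computation, and the limit $\LTp\downarrow 0$ is then trivial because $\varphi^{-1}(\LTp)\to 0$ (Corollary \ref{cor:lambdaRlambda*}). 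Without that explicit ansatz (or an equivalent Wiener--Hopf argument), your derivation of $q_k$ is not a proof, and since the whole theorem rests on that identity, the gap is essential.
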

\begin{proof}  Without loss of generality 
we look at the process $\Nl{-h}(Y^h)$ with  $Y_0^h=0$.  
We first justify the identities for $j\in\mathbb N$ 
\[
\mathbb P_0\left[ \Nl{-h}(Y^h)_{J_1}=jh \right]=\mathbb P_0\left[ Y^{h}_{\sigma_{\mathbb N }^{Y^h}}=jh \right]=\mathbb P_0\left[\sigma_j^{Y^{h,{\rm stop}}}<\infty  \right] = -\frac1{\Gru_0 } \sum_{i=0}^j\Gru_j,
\]
where $J_1$ is the first jump time of $\Nl{-h}(Y^h)$, ${\sigma_{\mathbb N }^{Y^h}=\inf\{t>0: Y^{h}_t \ge h } \}$ 
and $\sigma_j^{Y^{h,{\rm stop}}}=\inf \{t>0: Y^{h,{\rm stop}}_t=hj \}$. The first identity is a direct consequence of the pathwise definition $\Nl{-h}$ (Definition \ref{def:ff}) along with the piecewise constant paths of $Y^h$ (or simply put, $\Nl{-h}(Y^h)$ visits $jh$ the first time it moves if and only if $Y^h$ visits $jh $ the first time it enters $h\mathbb N$); the second identity is clear; and the third identity is due   Corollary \ref{cor:lambdaRlambda*} and \eqref{eq:ergodic_stroock}.
Also by Proposition \ref{prop:waitFF} we know that $J_1$ at 0 is an exponential waiting time  with rate $\Gru_0 $, which concludes the proof for $\Nl{-h}(Y^h)$. For   $\Nr{h}(Y^h)$ the proof is immediate from Propositions \ref{prop:waitFF} and \ref{prop:recur}.
\end{proof}


\begin{proposition}\label{prop:waitFF}
For any $h>0$, assuming \ref{H0} and $Y^h_0=0$, the first jump of $\Nl{-h}(Y^h)$ and of $\Nr{h}(Y^h)$ for  are both exponentially distributed with rate $\Gru_0$. 
\end{proposition}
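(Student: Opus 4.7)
The plan is to observe that, starting from $Y^h_0 = 0$, the first jump time $J_1$ of each fast-forwarded process equals the total Lebesgue time $Y^h$ itself spends at the state $0$ before the first excursion-ending event: for $\Nl{-h}(Y^h)$ this event is $Y^h$'s first entry into $\{h,2h,\dots\}$, while for $\Nr{h}(Y^h)$ it is $Y^h$'s first jump to $-h$. The strong Markov property applied at successive visits to $0$ expresses $J_1$ as a geometric sum of i.i.d.\ $\mathrm{Exp}(-\Gru_1)$ durations, so $J_1$ is itself exponential; it then remains to identify the rate as $\Gru_0$.

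For $\Nr{h}(Y^h)$, spectral positivity of $Y^h$ (downward jumps only of size $h$, upward jumps arbitrary) combined with recurrence (Proposition~\ref{prop:recur}) forces every excursion of $Y^h$ into $\{h,2h,\dots\}$ to return to $\{x\le 0\}$ via the single descending step from $h$ to $0$. Hence each visit to $0$ ends independently either in the ``success'' jump to $-h$, with probability $\Gru_0/(-\Gru_1)$, or in an upward excursion followed by a.s.\ return to $0$; the geometric sum of $\mathrm{Exp}(-\Gru_1)$ waits collapses directly to $\mathrm{Exp}\bigl(-\Gru_1\cdot \Gru_0/(-\Gru_1)\bigr) = \mathrm{Exp}(\Gru_0)$.

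For $\Nl{-h}(Y^h)$ the ``return-to-$0$'' probability is nontrivial (since $Y^h$ can re-enter $\{0,h,2h,\dots\}$ from below $-h$ at any of these states), so we sidestep it and compute $\mathbb{E}_0[J_1]$ directly via the resolvent of the stopped chain $Y^{h,\mathrm{stop}}$. Let $s_k^{(\LTp)} := R^{Y^{h,\mathrm{stop}}}_\LTp(-kh,0)$ for $k\ge 0$; since $Y^{h,\mathrm{stop}}$ never revisits $0$ after absorption, $\mathbb{E}_0[J_1] = \lim_{\LTp\downarrow 0} s_0^{(\LTp)}$. Writing the backward resolvent equation $(\LTp I - G_{\mathrm{stop}})s^{(\LTp)} = e_0$, with $G_{\mathrm{stop}}$ from \eqref{eq:Gstop}, and introducing the generating functions $S^{(\LTp)}(z) = \sum_{k\ge 0} s_k^{(\LTp)} z^k$ and $\Gru(z) = \sym((1-z)/h) = \sum_{j\ge 0}\Gru_j z^j$, the lower-Hessenberg structure of $G_{\mathrm{stop}}$ yields the clean identity
\begin{equation*}
(\Gru(z) - \LTp z)\, S^{(\LTp)}(z) \;=\; \Gru_0\,s_0^{(\LTp)} - z.
\end{equation*}
Since $\sym$ is nondecreasing on $[0,\infty)$, $\Gru$ is strictly decreasing on $[0,1]$ with $\Gru(0)=\Gru_0>0$ and $\Gru(1)=\sym(0)=0$, so the left factor has a unique root $\xi(\LTp)\in(0,1)$; as $S^{(\LTp)}$ has non-negative coefficients with sum $\sum_k s_k^{(\LTp)}\le 1/\LTp$, the series converges absolutely at $\xi(\LTp)$, forcing $\Gru_0\,s_0^{(\LTp)}=\xi(\LTp)$. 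Letting $\LTp\downarrow 0$, $\xi(\LTp)\to 1$ (the only zero of $\Gru$ in $[0,1]$), and monotone convergence gives $\mathbb{E}_0[J_1] = 1/\Gru_0$, hence $J_1\sim\mathrm{Exp}(\Gru_0)$.

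The hardest step will be the clean derivation of the generating-function identity from the lower-Hessenberg form of $G_{\mathrm{stop}}$ and the passage $\LTp\downarrow 0$ (keeping track of the fact that $\xi(\LTp)\to 1$ is the only relevant root); a secondary subtlety is the rigorous invocation of the strong Markov property at successive visits to $0$, so that $J_1$ genuinely has the geometric-sum-of-exponentials structure and the matched mean determines the full distribution.
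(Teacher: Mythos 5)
Your argument is correct in substance and takes a genuinely different route from the paper's in the $\Nl{-h}$ case. The paper establishes exponentiality by the same geometric-sum-of-exponentials decomposition you use (but carried out in painstaking detail with Erlang distributions and Tonelli), and then determines the rate by computing the \emph{return-to-zero probability} $\mathbb{P}_0[\sigma^{(1)}<\infty]$ of $Y^{h,\mathrm{stop}}$: it applies the strong Markov property at the first jump, then invokes Corollary~\ref{cor:lambdaRlambda*} together with the ergodic formula \eqref{eq:ergodic_stroock}, both of which rest on the explicit resolvent formula for $(\LTp I-G^T_{\mathrm{stop}})^{-1}\vec e_0$ proved in the lemma above Corollary~\ref{cor:lambdaRlambda*}. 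You instead sidestep the return probability and compute $\mathbb E_0[J_1]=\lim_{\LTp\downarrow 0}R_\LTp(0,0)$ via a generating function for the \emph{column} resolvent $s_k=R_\LTp(-kh,0)$; your identity $(\Gru(z)-\LTp z)S^{(\LTp)}(z)=\Gru_0 s_0^{(\LTp)}-z$ is correct, and the unique root $\xi(\LTp)\in(0,1)$ of $\Gru(z)=\LTp z$ is precisely $e^{-h\varphi^{-1}(\LTp)}$ in the paper's notation (note $\varphi(\LTp)=\LTp$ is equivalent to $\xi^{-1}\sym((1-\xi)/h)=\LTp$ after the substitution $\xi=e^{-h\LTp}$), so the two calculations are the row- and column-sides of the same Wiener--Hopf-type inversion. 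Your approach is somewhat lighter since it only extracts $s_0$ rather than the full resolvent row, but it buys you only the mean, so you must lean on the geometric-sum structure to upgrade the mean to the full distribution; the paper instead gets the rate directly from the decomposition. For $\Nr{h}$ the two arguments are essentially identical (you compute the exit probability $\Gru_0/(-\Gru_1)$, the paper computes the complementary return probability $(-\Gru_0-\Gru_1)/(-\Gru_1)$).

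One justification in your write-up is not quite right: you claim $\sum_k s_k^{(\LTp)}\le 1/\LTp$, but there is no reason the column sums of the resolvent are bounded by $1/\LTp$ (that bound holds for the row sums). What you actually need, and what is true, is the individual bound $s_k^{(\LTp)}\le s_0^{(\LTp)}\le 1/\LTp$ (from $R_\LTp(-kh,0)=\mathbb E_{-kh}[e^{-\LTp T_0}]R_\LTp(0,0)$), which already gives $S^{(\LTp)}$ radius of convergence at least $1>\xi(\LTp)$ and hence absolute convergence at $\xi(\LTp)$. The rest of the argument is sound; you correctly flag that the rigorous verification of the geometric-sum-of-exponentials structure is the real work, which the paper carries out in full.
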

\begin{proof} 
Denote by $E^{Y^h}_m$ the waiting time between the $(m-1)$-th and the $m$-th jump of  $Y^h$ for $m\in\mathbb N$, and recall that $\{E^{Y^h}_m:m\in\mathbb N\}$ is a collection of iid  exponential random variables with rate  $\mu=-\Gru_1$ and      $\mathbb P[\sum_{m=1}^j E^{Y^h}_m>t]=e^{-\mu t}\sum_{m=0}^{j-1}\frac{(\mu t)^m}{m!}$ (Erlang distribution).
Let $J_m$ denote the $m$-th jump time of the stopped process $Y^{h,{\rm stop}}$ ($Y^h$ stopped the first time it visits $h\mathbb N$) and define $\sigma^{(m)}$ to be the $m$-th return time at 0  of $Y^{\rm{stop}}$ with the convention that $\sigma^{(0)}=0$, and define the function $j(\sigma^{(m)}):\{\sigma^{(m)}<\infty\}\to \mathbb N\cup\{0\}$ such that $\sigma^{(m)} =J_{j(\sigma^{(m)})}$.    We now justify the following identities
\begin{align*}
&\quad\,\,\mathbb P_0[\Nl{-h}(Y^h)_s=0\,\,\text{for all }s\le t] \\
&= \mathbb P_0\left[\int_0^\infty \mathbf 1_{\{Y^{h,\rm{stop}}_z=0\}}\,\dd z>t\right]\\
&=\sum_{n=0}^\infty \mathbb P_0\left[\left\{\int_0^\infty \mathbf 1_{\{Y^{h,\rm{stop}}_z=0\}}\,\dd z>t\right\}\cap\left\{  \sigma^{(n)}<\infty, \sigma^{(n+1)}=\infty \right\}\right]\\
&=\sum_{n=0}^\infty \mathbb P_0\left[\left\{\sum_{m=0}^n\int_{J_{j(\sigma^{(m)})}}^{J_{j(\sigma^{(m)})+1}} \mathbf 1_{\{Y^{h,{\rm stop}}_z=0\}}\,\dd z>t\right\}\cap\left\{  \sigma^{(n)}<\infty, \sigma^{(n+1)}=\infty \right\}\right]\\
&=\sum_{n=0}^\infty \mathbb P_0\left[\left\{\sum_{m=0}^n J_{j(\sigma^{(m)})+1}-J_{j(\sigma^{(m)})} >t\right\}\cap \left\{  \sigma^{(n)}<\infty, \sigma^{(n+1)}=\infty \right\}\right]\\
&=\sum_{n=0}^\infty \left(e^{-\mu t}\sum_{m=0}^{n}\frac{(\mu t)^m}{m!}\right)\mathbb P_0\left[ \sigma^{(n)}<\infty, \sigma^{(n+1)}=\infty \right]\\
&=e^{-\mu t}\sum_{m=0}^\infty \frac{(\mu t)^m}{m!} \left(\sum_{n\ge m}\mathbb P_0\left[ \sigma^{(n)}<\infty, \sigma^{(n+1)}=\infty \right]\right)\\
&=e^{-\mu t}\sum_{m=0}^\infty \frac{(\mu t)^m}{m!} \mathbb P_0\left[\sigma^{(m)}<\infty\right]\\
&=e^{(-\mu +\mu \mathbb P_0\left[\sigma^{(1)}<\infty\right] )t} .
\end{align*}
The first identity is immediate from the definition of $\Nl{-h}(Y^h)$, as this process does not move for $t$-units of time if and only if  the free process $Y^h$ spend at least $t$-units of time at 0 before visiting $h\mathbb N$; the second identity uses  $ \{ \sigma^{(m)}<\infty, \sigma^{(m+1)}=\infty \} \cap \{ \sigma^{(n)}<\infty, \sigma^{(n+1)}=\infty \}=\emptyset  $ for all $n\neq m$; the third and fourth identities are clear observing that, on $\{ \sigma^{(n)}<\infty, \sigma^{(n+1)}=\infty \}$, $Y^{h,\rm{stop}}$ visits 0 only at times $ \sigma^{(m)}$ for $m\le n$;  the sixth identity is an application of Tonelli's Theorem: the second last identity is clear: the last identity uses  $ \mathbb P_0\left[\sigma^{(m)}<\infty\right] =\mathbb P_0\left[\sigma^{(1)}<\infty\right]^m$ \cite[page 96]{Stroock13}; the fifth identity follows by observing that 
$$\left(J_{j(\sigma^{(m)})+1}-J_{j(\sigma^{(m)})}\right) =E^{Y^h}_{j(\sigma^{(m)})} \quad\text{on}\quad \{\sigma^{(m)}<\infty\},$$ and computing 
\begin{align*}
 &\quad\,\, \mathbb P_0\left[\left\{\sum_{m=0}^{n} E^{Y^h}_{j(\sigma^{(m)})} >t\right\}\cap \{\sigma^{(n)}<\infty, \sigma^{(n+1)}=\infty\} \right]\\
  &= \sum_{p\in P_n}\mathbb P_0\left[  \left\{\sum_{m=0}^{n} E^{Y^h}_{p(m)} >t\right\}\cap \{\sigma^{(n)}<\infty, \sigma^{(n+1)}=\infty\}\cap\left\{ j(\sigma^{(m)})=p(m):m\le n\right\}\right]\\
  &= \sum_{p\in P_n}\mathbb P_0\left[  \sum_{m=0}^{n} E^{Y^h}_{p(m)} >t \right]\mathbb P_0\left[ \{\sigma^{(n)}<\infty, \sigma^{(n+1)}=\infty\}\cap\left\{ j(\sigma^{(m)})=p(m):m\le n\right\}\right]\\
    &=\mathbb P_0\left[  \sum_{m=1}^{n+1} E^{Y^h}_{m} >t \right] \sum_{p\in P_n}\mathbb P_0\left[ \{\sigma^{(n)}<\infty, \sigma^{(n+1)}=\infty\}\cap\left\{ j(\sigma^{(m)})=p(m):m\le n\right\}\right] ,
\end{align*}
where $P_n=\{p:\{0,1,...,n\}\to\mathbb N;\, p(m)<p(m+1)\}$
  is countable as $P_n=\cup_{l=n}^\infty \{p:\{0,1,...,n\}\to\{1,...,l+1\}, p(m)<p(m+1)\}$, and note that we used independence of the event $\{\sigma^{(n)}<\infty, \sigma^{(n+1)}=\infty , j(\sigma^{(m)})=p(m), m\le n \}  $ and the random variables  $\{E^{Y^h}_l:l\ge 0\}$ (the event depends only on the jump magnitudes of $Y^h$, not on the length of the waiting times of $Y^h$ \cite[Page 81]{Stroock13}). 

Hence we showed that the waiting time is exponential, and it remains to compute $\mathbb P_0\left[\sigma^{(1)}<\infty\right]$. We directly compute
\begin{align*}\mathbb P_0\left[\sigma^{(1)}<\infty\right]&=\mathbb P_0\left[Y^{h,\text{stop}}_{J_1}=-1,\, s\mapsto Y^{h,\text{stop}}_{J_1+s}\text{ hits }0  \right]\\
& =\mathbb P_0\left[Y^{h,\text{stop}}_{J_1}=-1\right]\mathbb P_{-1}\left[Y^{h,\text{stop}} \text{ hits }0  \right]   \\ 
& =\mathbb P_0\left[Y^{h}_{J^{Y^h}_1}=-1\right]\mathbb P_{0}\left[Y^{h,\text{stop}} \text{ hits }1 \right] \\
& =\frac{\Gru_0}{-\Gru_1}\frac{-\Gru_0-\Gru_1}{\Gru_0},
\end{align*}
where $J_1^{Y^h}$ is the first jump time of $Y^{h}$, and we used the strong Markov property in the second identity, the third identity is clear, and we used Corollary \ref{cor:lambdaRlambda*} with \eqref{eq:ergodic_stroock} in the last identity. To prove the statement for   $ \Nr{h}(Y^h)$ observe that the waiting time is exponential by the same proof above, but using $Y^{h,\text{stop}-}$ instead of  $Y^{h,\text{stop}}$, where $Y^{h,\text{stop}-}$ is the process $Y^{h}$ stopped on its first visit of $-h\mathbb N$. To calculate the average of the exponential time it remains to compute $\mathbb P_0\left[\sigma^{(1)-}<\infty\right]$, where $\sigma^{(1)-}$ is the time of first return at 0 of  $Y^{h,\text{stop}-}$. Because $Y^h$ moves to the left by single step and it is recurrent (Proposition \ref{prop:recur}) we easily obtain, using \eqref{eq:26},
\[
\mathbb P_{0}\big[\sigma^{(1)-}<\infty\big]=\mathbb P_{0}\left[ Y^{h}_{J_1^{Y^h}}=1\right]=\frac{\sum_{m=2}^\infty\Gru_m}{-\Gru_1}=\frac{- \Gru_0- \Gru_1}{-\Gru_1}.
\]

\end{proof}

The following is an easy consequence of Propositions  \ref{prop:NawaitYh} and \ref{prop:waitFF}, and we omit the proof.
\begin{corollary}
For any $h>0$, assuming \ref{H0} and $Y^h_0\in \{1-jh:\,j\in\mathbb N\}$, it holds that $\Nrone(Y^h)=   \Nar{1-h}(Y^h) $ in law. 
\end{corollary}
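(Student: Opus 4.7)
The plan is to argue that both $\Nrone(Y^h)$ and $\Nar{1-h}(Y^h)$ are continuous-time Markov chains on the countable state space $S=\{1-kh:k\in\mathbb N\}$, started from the common point $Y^h_0\in S$, and to show that their transition-rate matrices coincide. This will suffice, as such chains are determined in law by their initial distribution and rate matrix (see Section \ref{subsec:SP}). The Markov property for $\Nrone(Y^h)$ is inherited from the strong Markov property of $Y^h$ applied at the stopping times $\AF^{-1}_{Y^h}(u)$; for $\Nar{1-h}(Y^h)$ it follows from the Skorokhod representation $\Nar{1-h}(Y^h)(t)=Y^h(t)-\sup_{s\le t}(Y^h(s)-(1-h))^+$ together with the observation that the running maximum can be ``reset'' at every state visit via the strong Markov property of $Y^h$.

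At the boundary state $1-h$, Theorem \ref{thm:ffwaitYh} already yields, for $\Nrone(Y^h)$, an $\mathrm{Exp}(\Gru_0)$ holding time with a single transition to $1-2h$. For $\Nar{1-h}(Y^h)$ I would argue in parallel to Proposition \ref{prop:NawaitYh}: the reflected process equals $1-h$ precisely when $Y^h$ sits at its running maximum above $1-h$, so it leaves $1-h$ exactly at the first downward jump of $Y^h$; thinning the compound Poisson jumps of $Y^h$ in the style of Proposition \ref{prop:waitFF} identifies the holding time as $\mathrm{Exp}(\Gru_0)$, and at that downward jump the running maximum equals $Y^h(t-)-(1-h)$, forcing the reflected value to drop by $h$ to $1-2h$.

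At an interior state $1-jh$ with $j\ge 2$, neither modification has yet activated, so both chains follow $Y^h$ until its next jump. A jump of $Y^h$ of size $(k-1)h$ occurs at rate $\Gru_k$ ($k\ne 1$); if the destination remains in $S$ (i.e.\ $k\le j$), both chains record the same transition to $1-(j-k+1)h$; if it overshoots to $1$ or above (i.e.\ $k\ge j+1$), both chains land at $1-h$: the fast-forwarded chain because the downward jumps of $Y^h$ have amplitude $-h$, so the first re-entry of $Y^h$ into $\{<1\}$ is necessarily at $1-h$, and the reflected chain because such an overshoot constitutes a new historic maximum which, via the reflection formula, pins the reflected value to $1-h$. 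The downward jump of $Y^h$ (rate $\Gru_0$) sends both chains to $1-(j+1)h$. Summing, the two rate matrices agree and the claim follows.

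The main difficulty I anticipate is the careful justification of the Markov property of $\Nar{1-h}(Y^h)$ and the rigorous statement of the ``reset'' of the running maximum at each state visit; once these two items are in place the remainder is bookkeeping with the Gr\"unwald coefficients $\Gru_k$.
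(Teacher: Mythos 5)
Your proposal is correct and matches the paper's intended route: the paper omits the proof but flags it as an immediate consequence of Proposition \ref{prop:NawaitYh} and Proposition \ref{prop:waitFF}, which give the common $\mathrm{Exp}(\Gru_0)$ holding at $1-h$ with forced transition to $1-2h$ for both modifications, after which the comparison of rate matrices on the lattice $\{1-kh:k\in\mathbb N\}$ is exactly the bookkeeping you describe. The Markov property of the two modifications is the standard input (time change by the inverse of a continuous additive functional for fast-forwarding, one-sided Skorokhod reflection of a L\'evy process for $\Nar{1-h}$), already invoked in the paper in the proof of Proposition \ref{prop:cont_semigroup}.
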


In Proposition \ref{prop:waitFF} we found the exponential waiting time of $\Nlone(Y^h)$ at $h-1$. To find the distribution of the where $\Nlone(Y^h)$ moves the first time it jumps we prove the generalised version of \cite[Theorem 16]{MR3720847}, with the proof essentially unchanged. We denote by $G_{\mathrm{stop}}^T$ the transpose of $G_{\mathrm{stop}}$, so that 
\begin{align*}
\big(G_{\mathrm{stop}}^T\vec x\big)_n=\sum_{k\in\mathbb Z}g_{k,n}\vec x_k 
&=\sum_{k\ge n+1}\Gru_{k} \vec x_{n+1-k}.
\end{align*}


\begin{lemma} Assume \ref{H0}. Then the resolvent of $G^T_{\mathrm{stop}}$ for $\LTp>0$ evaluated at $\vec e_{0}$ is given by $\left((\LTp I-G^T_{\mathrm{stop}})^{-1}\vec e_{0}\right)=\vec{y}/\Gru_{0} $, where
\begin{equation}\label{resy*}
\vec y_n=\begin{cases}e^{h(n-1)\varphi^{-1}(\LTp)},&n\le 0,\\
\frac1\LTp \sum_{k=n}^\infty \Gru_{k+1}  e^{h(n-1-k)\varphi^{-1}(\LTp)}, &n> 0,\end{cases}
\end{equation}
 and $\varphi(\LTp):=e^{ h \LTp }\sym((1-e^{- h \LTp})/h)=\sum_{k=0}^\infty \Gru_{k} e^{h(1-k)\LTp}.$
\end{lemma}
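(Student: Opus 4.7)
The plan is to verify directly that $\vec x := \vec y/\Gru_0$ solves the resolvent equation $(\LTp I - G_{\mathrm{stop}}^T)\vec x = \vec e_0$, which written in coordinates reads
\[
\LTp \vec y_n - \sum_{k\ge n+1} \Gru_k \vec y_{n+1-k} = \Gru_0\,\mathbf 1_{\{n=0\}}, \quad n\in\mathbb Z,
\]
and then to conclude by uniqueness of the resolvent on bounded sequences (valid since $G_{\mathrm{stop}}$ has bounded rate $-\Gru_1$, and $\vec y$ is easily seen to be bounded: the $n\le 0$ branch is dominated by $e^{-h\mu}<1$, and for $n>0$ the series is dominated by $\LTp^{-1}e^{-h\mu}(-\Gru_1)$). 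The single algebraic input I will need is the identity $\varphi(\mu) = \sum_{k\ge 0}\Gru_k e^{h(1-k)\mu} = \LTp$ at $\mu := \varphi^{-1}(\LTp)$, which is well-defined and strictly positive since $\varphi$ is continuous and strictly increasing on $[0,\infty)$ with $\varphi(0)=0$ and $\varphi(\infty)=\infty$.

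For the case $n\le 0$ I will substitute the first branch $\vec y_n = e^{h(n-1)\mu}$. Because the summation range $k\ge n+1$ forces $n+1-k\le 0$, the same branch applies to each $\vec y_{n+1-k} = e^{h(n-k)\mu}$, so the common factor $e^{h(n-1)\mu}$ pulls out to leave the inner sum $\sum_{k\ge n+1}\Gru_k e^{h(1-k)\mu}$. For $n<0$ this is the full series $\varphi(\mu) = \LTp$ (using $\Gru_k=0$ for $k<0$), which makes the left-hand side $0$. For $n=0$ only the $k=0$ term is missing, so the inner sum equals $\LTp - \Gru_0 e^{h\mu}$, and combining with $\LTp \vec y_0 = \LTp e^{-h\mu}$ the left-hand side collapses exactly to $\Gru_0$.

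For $n>0$ I will use the second branch of $\vec y_n$ and apply the index shift $j = k+1$ to obtain
\[
\LTp \vec y_n = \sum_{k\ge n}\Gru_{k+1} e^{h(n-1-k)\mu} = \sum_{j\ge n+1}\Gru_j e^{h(n-j)\mu}.
\]
Once again $n+1-k \le 0$ on the convolution range, so $\vec y_{n+1-k} = e^{h(n-k)\mu}$, and $\sum_{k\ge n+1}\Gru_k \vec y_{n+1-k}$ matches $\LTp \vec y_n$ term-for-term, giving left-hand side $=0$. The whole argument is essentially careful index bookkeeping; the only subtle spot is the boundary index $n=0$, where the missing $\Gru_0 e^{h\mu}$ term in the truncated series is precisely what produces the right-hand side $\Gru_0$. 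I do not expect any genuine obstacle beyond tracking the piecewise definition of $\vec y$ at the junction between the two branches.
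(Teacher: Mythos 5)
Your proof is correct and follows essentially the same route as the paper's: directly verify that $(\LTp I - G^T_{\mathrm{stop}})\vec y = \Gru_0\vec e_0$ by treating the cases $n<0$, $n=0$, $n>0$ separately, using $\varphi(\varphi^{-1}(\LTp))=\LTp$ to collapse the series for $n\le 0$ and an index shift to match $\LTp\vec y_n$ with the convolution for $n>0$. The only difference is that you make the uniqueness/invertibility step explicit (and would be more precisely phrased in terms of $\ell^1$, where $\vec e_0$ lives and $\vec y$ in fact belongs, rather than merely bounded sequences), whereas the paper takes the existence of $(\LTp I - G^T_{\mathrm{stop}})^{-1}$ as given from the Markov-chain resolvent theory and leaves this implicit.
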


\begin{proof}
Note that $\varphi$ is invertible on $[0,\infty)$, as its derivative is positive  on $(0,\infty)$ and bounded below away from 0, which is immediate from $\sym',\,\sym>0$ on $(0,\infty)$ and
\[\varphi'(\LTp)=he^{h\LTp}\sym((1-e^{- h \LTp})/h)+\sym'((1-e^{- h \LTp})/h)>0.\]
 Next we apply  $\LTp I-G^T_{\mathrm{stop}}$ to our candidate resolvent  and show that the result is indeed $\vec e_{0}$. Recall that $\vec{y}$ is given by \eqref{resy*}. For $n\le -1$,
\begin{equation*}\begin{split}\left((\LTp I-G_{\mathrm{stop}}^T)\vec y\right)_n&=\LTp e^{ h (n-1)\varphi^{-1}(\LTp)}-\sum_{k=0}^\infty \Gru_{k}  e^{ h (n-k)\varphi^{-1}(\LTp)}\\
&=\LTp e^{ h (n-1)\varphi^{-1}(\LTp)}-e^{ h (n-1)\varphi^{-1}(\LTp)}\sum_{k=0}^\infty \Gru_{k}  e^{ h (1-k)\varphi^{-1}(\LTp)}\\
&=\LTp e^{ h (n-1)\varphi^{-1}(\LTp)}-e^{ h (n-1)\varphi^{-1}(\LTp)}\varphi(\varphi^{-1}(\LTp))=0.
\end{split}\end{equation*}
For $n=0$, 
\begin{equation*}\begin{split}\left((\LTp I-G^T_{\mathrm{stop}})\vec y\right)_{0}&=\LTp e^{- h \varphi^{-1}(\LTp)}-\sum_{k=1}^\infty \Gru_{k}  e^{- h k\varphi^{-1}(\LTp)}\\
&=\LTp e^{- h \varphi^{-1}(\LTp)}-e^{- h \varphi^{-1}(\LTp)}\sum_{k=0}^\infty \Gru_{k}  e^{ h (1-k)\varphi^{-1}(\LTp)}+ \Gru_{0}  \\
&=\LTp e^{- h \varphi^{-1}(\LTp)}-e^{- h \varphi^{-1}(\LTp)}\varphi(\varphi^{-1}(\LTp))+\Gru_{0}  =\Gru_{0}  .
\end{split}\end{equation*}
For $n\ge 1$, $\left((\LTp I-G^T_{\mathrm{stop}})\vec{y}\right)_{n}=0$ by the  definition of $G^T_{\mathrm{stop}}$ and $\vec{y}$. Hence $(\LTp I-G^T_{\mathrm{stop}})\vec{y}=\Gru_{0} \vec e_{0}$ and therefore $(\LTp I-G^T_{\mathrm{stop}})^{-1}\vec e_{0}=\vec{y}/\Gru_{0} $.
\end{proof}

\begin{corollary}\label{cor:lambdaRlambda*}  Assuming \ref{H0}, we have that
$\lim_{\LTp\downarrow 0} \LTp(\LTp I-G^T_{\mathrm{stop}}) ^{-1} \vec{e}_{0}=\vec z,$
where $\vec z_j=0$ for $j\le 0$ and $\vec z_j=\frac{-1}{\Gru_{0} } \sum_{k=0}^j \Gru_{k} $ for $j> 0$.
\end{corollary}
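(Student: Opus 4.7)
The plan is to combine the explicit resolvent formula \eqref{resy*} with the limit $\varphi^{-1}(\LTp)\downarrow 0$ and apply dominated convergence entrywise. By the lemma, $(\LTp I-G^T_{\mathrm{stop}})^{-1}\vec e_0=\vec y/\Gru_0$, so I need to compute $\lim_{\LTp\downarrow 0}\LTp \vec y_n/\Gru_0$ for every $n\in\mathbb Z$. The first input I will record is that $\varphi$ is strictly increasing with $\varphi(0)=0$ and $\varphi$ continuous, so $\varphi^{-1}$ is well-defined on $[0,\infty)$ with $\varphi^{-1}(\LTp)\downarrow 0$ as $\LTp\downarrow 0$; this was essentially proved in the lemma. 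The second input I need is the algebraic identity $\sum_{k=0}^\infty \Gru_k = 0$, which follows by evaluating $\varphi(\LTp)=\sum_{k=0}^\infty \Gru_k e^{h(1-k)\LTp}$ at $\LTp=0$ (legitimate because, by \eqref{eq:26}, the series converges absolutely with total mass $\Gru_1+\sum_{k\neq 1}\Gru_k=0$).

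For $n\le 0$, I would simply note that $\LTp \vec y_n/\Gru_0=\LTp e^{h(n-1)\varphi^{-1}(\LTp)}/\Gru_0$, and since $\varphi^{-1}(\LTp)\to 0$ the exponential tends to $1$, giving the limit $0=\vec z_n$. For $n\ge 1$, the expression simplifies to
\[
\LTp\vec y_n/\Gru_0 \;=\; \frac{1}{\Gru_0}\sum_{k=n}^\infty \Gru_{k+1}\,e^{h(n-1-k)\varphi^{-1}(\LTp)}.
\]
For $k\ge n\ge 1$ the exponent $h(n-1-k)\varphi^{-1}(\LTp)\le 0$, so each term is bounded in modulus by $\Gru_{k+1}>0$ (recall $\Gru_j>0$ for $j\ge 2$ by \eqref{eq:26}), and $\sum_{k\ge n}\Gru_{k+1}\le -\Gru_1-\Gru_0<\infty$. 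Dominated convergence therefore yields
\[
\lim_{\LTp\downarrow 0}\LTp\vec y_n/\Gru_0 \;=\; \frac{1}{\Gru_0}\sum_{k=n}^\infty \Gru_{k+1} \;=\; \frac{1}{\Gru_0}\sum_{j=n+1}^\infty \Gru_j.
\]

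The final step is to convert the tail sum to a head sum using $\sum_{j=0}^\infty \Gru_j=0$, which gives $\sum_{j=n+1}^\infty \Gru_j=-\sum_{j=0}^n \Gru_j$, and hence the limit equals $-\frac{1}{\Gru_0}\sum_{k=0}^n\Gru_k=\vec z_n$. No step here is a serious obstacle: the only thing to be careful about is the dominating series and the absolute convergence of $\sum_k \Gru_k$, both of which are controlled by \eqref{eq:26}.
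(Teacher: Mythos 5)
Your proof is correct and takes the same route as the paper's: plug in the resolvent formula \eqref{resy*}, send $\varphi^{-1}(\LTp)\downarrow 0$, and close with the identity $\sum_{k\ge 0}\Gru_k=0$ coming from \eqref{eq:26}. Your write-up is a bit more explicit than the paper's --- you spell out the dominating bound for the tail series and the reindexing step that the paper compresses into ``we conclude with \eqref{eq:26}'' --- but it is the same argument.
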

\begin{proof} As $\LTp\downarrow 0$, $\varphi^{-1}(\LTp)\to 0$ and hence 
$$\left(\LTp(\LTp I-G^T_{\mathrm{stop}}) ^{-1}\vec e_{0}\right)_n\to 0$$ for all $n\le 0$. For $n> 0$,
$$\left(\LTp(\LTp I-G^T_{\mathrm{stop}}) ^{-1}\vec e_{0}\right)_n\to \frac{1}{\Gru_{0} } \sum_{k=n}^\infty \Gru_{k+1} ,$$
and we conclude with \eqref{eq:26}.

\end{proof}


\subsection{Convergence of the  Gr\"unwald type process}\label{sec:convfreegru}
\begin{proposition}\label{prop:gruntolevy} Assume \ref{H0} and let $-\infty\le a<b\le \infty$. Then, if $Y^h_0\Rightarrow Y_0$ on $\R$ as $h\to 0$, then   $Y^h\Rightarrow Y$    on $D_{c,d}([0,\infty),\mathbb R)$ as $h\to 0$.
\end{proposition}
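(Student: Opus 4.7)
The plan is to deduce weak convergence from convergence of the Feller semigroups on $C_0(\mathbb R)$, and then upgrade this to convergence in the subspace $D_{a,b}([0,\infty),\mathbb R)$ using almost-sure containment. Throughout I read ``$D_{c,d}$'' in the statement as $D_{a,b}$.

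\emph{Step 1 (generator convergence on a core).} By Remark \ref{rmk:Yproperties}(i), the Feller generator of $Y$ on $C_0(\mathbb R)$ is the closure of $(\Cr,C_c^\infty(\mathbb R))$, and by \eqref{eq:Gh} the compound Poisson process $Y^h$ has bounded Feller generator $\Crh$. The first task is to show that for every $g\in C_c^\infty(\mathbb R)$,
\[
\bigl\|\Crh g-\Cr g\bigr\|_{C_0(\mathbb R)}\;\longrightarrow\;0\qquad\text{as }h\to 0.
\]
I would rewrite $\Cr g(x)=\int_0^\infty g''(x+y)\Phi(y)\,\dd y$ and recast $\Crh g(x)=\sum_{j\ge 0}g(x+(j-1)h)\Gru_{j,h}$ as a second-difference of $g$ integrated against a discrete analogue of $\Phi$. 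The Taylor remainder, combined with $\int_{(0,\infty)}(y^2\wedge y)\,\phi(\dd y)<\infty$ from \ref{H0} and bounds on the weights $\Gru_{j,h}$ from \cite[Section 3]{BKT20}, yields uniform convergence, where the splitting into small/large jumps at scale $h^{1/2}$ isolates the singular part of $\phi$ near~$0$.

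\emph{Step 2 (semigroup and process convergence on $D([0,\infty),\mathbb R)$).} Since $C_c^\infty(\mathbb R)$ is a core for the generator of $Y$, Step 1 together with the Trotter--Kato theorem yields semigroup convergence $T^h_tg\to T_tg$ in $C_0(\mathbb R)$ for every $g\in C_0(\mathbb R)$, uniformly on compact time intervals. Combined with the hypothesis $Y^h_0\Rightarrow Y_0$, the standard Feller weak-convergence result \cite[Theorem 4.2.5]{MR838085} gives $Y^h\Rightarrow Y$ in $D([0,\infty),\mathbb R)$ with the $J_1$ topology.

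\emph{Step 3 (transfer to the subspace).} By Remark \ref{rmk:recur}, both $Y^h$ and $Y$ lie in $D_{a,b}([0,\infty),\mathbb R)$ $\mathbb P$-a.s.\ for all $h$ small, and $D_{a,b}$ is a Borel subset of $D([0,\infty),\mathbb R)$ thanks to \eqref{eq:timeintunnel_measurable}. Since every relatively open $U\subset D_{a,b}$ is of the form $U'\cap D_{a,b}$ with $U'$ open in $D([0,\infty),\mathbb R)$, the Portmanteau theorem combined with the almost-sure containment gives
\[
\liminf_{h\to 0}\mathbb P[Y^h\in U]=\liminf_{h\to 0}\mathbb P[Y^h\in U']\ge \mathbb P[Y\in U']=\mathbb P[Y\in U],
\]
so the convergence descends to $D_{a,b}([0,\infty),\mathbb R)$ with the relative topology.

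The main obstacle is Step 1: absorbing the small-jump singularity of $\phi$ into the second-order regularity of $g$ while tracking the discrete Gr\"unwald weights $\Gru_{j,h}$ uniformly in $x$. Once this uniform generator estimate is secured, Steps 2 and 3 are routine applications of Trotter--Kato, the Ethier--Kurtz Feller convergence theorem, and the Portmanteau theorem.
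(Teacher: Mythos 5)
Your overall architecture — generator convergence on the core $C_c^\infty(\mathbb R)$, then a Trotter--Kato/Feller convergence theorem to get weak convergence on $D([0,\infty),\mathbb R)$, then an almost-sure containment argument to descend to $D_{a,b}([0,\infty),\mathbb R)$ — matches the paper's proof exactly. (You correctly read the statement's ``$D_{c,d}$'' as the typo it is, and your Step~3 Portmanteau argument is precisely what the paper compresses into ``follows immediately from Remark~\ref{rmk:recur}''; the paper cites Kallenberg rather than Ethier--Kurtz in Step~2, but those are interchangeable here. A small notational slip: the whole-line generator is $\Mr$, not $\Cr$.)

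Where you differ from the paper, and also where you misdiagnose the difficulty, is Step~1. You propose to establish $\|\Crh g - \Mr g\|_{C_0(\mathbb R)}\to 0$ from scratch via a Taylor/second-difference estimate with a small-jump/large-jump split at scale $h^{1/2}$, and you flag ``absorbing the small-jump singularity of $\phi$'' as the main obstacle. The paper instead splits $\mathbb R$ into three regions $(b,\infty)$, $[a',b]$, $(-\infty,a')$ (with $[a,b]\supset\mathrm{supp}\,g$), handles the compact middle piece by simply invoking \cite[Corollary~3.8]{BKT20} from Part~I (which already delivers the local uniform convergence, small-jump singularity included), and devotes essentially all of the work to the far-left tail $x<a'$. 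That tail is the genuinely new issue in passing to $C_0(\mathbb R)$: for $x$ far to the left of $\mathrm{supp}\,g$, the scheme $\Crh g(x)=\sum_{j\ge m(x,h)}\Gru_j g(x+(j-1)h)$ picks up only contributions with large index $j$, so a Taylor expansion of $g$ around $x$ gives you nothing (all relevant derivatives of $g$ vanish there); what you need is uniform-in-$h$ smallness of $\sum_{j\ge m}\Gru_j$ for large $m$, which the paper obtains from the Grünwald-weight estimates in \cite[Eq.~(32)]{BKT20} together with the decay of $\phi$. So your sketch does not actually address the part of Step~1 that the paper's proof is about; if you do carry out a self-contained Taylor argument, you will still need a separate tail estimate of exactly this kind to get uniformity on all of $\mathbb R$.
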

\begin{proof} Recall that  on $C^\infty_c(\mathbb R)$, two integration by parts prove 
\begin{align*}
\Mr   g(x)&=\int_0^\infty \Phi(y)g''(x+y)\,\dd y=\int_0^\infty\big( g(x+y)-g(x)-yg'(x)\big)\,\phi({\rm d}y),
\end{align*}
so that  the closure of $(\Mr,C^\infty_c(\mathbb R) ) $ in $C_0(\mathbb R) $ generates the Feller process $Y$ \cite[Corollary 2.10]{MR3156646}.  Let $0\neq g\in C^\infty_c(\mathbb R)$ and $a,b\in\mathbb R$ such that the support of $g$ is contained in $[a,b]$. Let $\epsilon>0$ be arbitrary. Then, for any $a'$ small enough with $a'<a$ and all $h>0$ small,  
\begin{equation*}
\left|\Crh  g(x)- \Mr g(x)\right|=\left\{\begin{split}
&0, & x\in (b,\infty),\\
&\left|\Crh g(x)- \Mr g(x)\right|, & x\in [a',b],\\
&|e_h(x)|, & x\in (-\infty,a'),
\end{split}\right.
\end{equation*}
where 
\[
e_h(x)=\sum_{j=m(x,h)}^\infty \Gru_{j} g(x+(j- 1)h)-\Mr g(x),
\]
with $ m(x,h)\ge2$ being the largest integer so that  $ (m(x,h)-1)h\le  a-x$. Thus we can choose  $a'$ such that for all $x\le a'$
\begin{align*}
|e_h(x)| & \le \sum_{j=m(x,h)}^\infty  \Gru_{j} |g(x+(j- 1)h)|+\epsilon \le \|g\|_{C_0(\mathbb R)}\Big(\sum_{j=m(a',h)}^\infty\Gru_{j} \Big) +\epsilon,
\end{align*}
because  $ \Mr g\in C_0(\mathbb R)$ and for each $h>0$, $m(a',h)\le m(x,h)$ for all $x\le a'$. By \eqref{eq:26} and an easy adaptation of the proof of the third limit in \cite[Eq. (32)]{BKT20},
\[
\limsup_{h\to 0}\sum_{j=m(a',h)}^\infty\Gru_{j} =-\limsup_{h\to 0}\sum_{j=0}^{m(a',h)-1}\Gru_{j} \le  3e\, \big(\phi([a-a',\infty))+\phi((a-a',\infty)) \big),
\]
and because $\lim_{y\to\infty }\phi((y,\infty))=0$, we can choose a possibly smaller $a'$ such that 
\[ 3e\, \big(\phi([a-a',\infty))+\phi((a-a',\infty)) \big)\le \frac\epsilon{2\|g\|_{C_0(\mathbb R)}},\]
so that for all small $h>0$ we can use the bound 
\begin{align*}
\sup_{x\in(-\infty,a']}|e_h(x)| & \le 2\epsilon.
\end{align*}
On the other hand, by \cite[Corollary 3.8]{BKT20}, for all $h>0$ small
\[
\left\|\Crh g- \Mr g\right\|_{C[a',b]}\le\epsilon.
\] 
As we proved strong convergence of the generators on the core $C_c^\infty(\mathbb R)$, by \cite[Theorem 17.25]{MR1464694} we obtain the weak convergence of the respective stochastic processes on $D([0,\infty),\mathbb R)$. The convergence on  $D_{c,d}([0,\infty),\mathbb R)$ follows immediately from Remark \ref{rmk:recur}.
 \end{proof}

\section{Convergence of processes and semigroups}\label{sec:main_II}
We are finally ready to prove our main results, by combining the Skorokhod continuity results of Section \ref{sec:skor_maps} with the Trotter–Kato convergence proved in Part I in \cite[Theorem 5.1]{BKT20}.  To do so, we first show that we can find a sequence of grids from the approximation scheme of Part I so that $\Yh{LR}_t\Rightarrow \Y{LR}_t$ for almost every $t>0$ and $\Y{LR}_0=\Yh{LR}_0=x$ in a dense subset of $(-1,1)$. Then we  combine this weak convergence with the Trotter–Kato convergence of the (interpolated) Gr\"unwald type semigroups of Part I. As we showed that on gridpoints these semigroups are given by $\Yh{LR}$ (Proposition \ref{prop:Yh_eq_Gh}), we can characterise pathwise the limit semigroups of Part I, and the proof of Theorem \ref{thm:main_II_intro} is complete. Finally, we apply our results to derive new resolvent measures for the processes involving a left fast-forwarding boundary condition.

\subsection{Skorokhod convergence}\label{sec:skor}
\begin{lemma}[Skorokhod convergence]\label{lem:skor} Assume \ref{H0} and recall the definitions of $Y,\,\Y{LR},\,Y^h$ and $\Yh{LR}$ from Tables \ref{tab:YLR} and \ref{tab:cmt_h}. For any $n\in\mathbb N$, let $h=2/(n+1)$ and $Y^h_0=Y_0=x\in \Gridh\backslash\{-1,1\}$. Let $\{h_{j}(x)=h_{j}:j\in\mathbb N\}$ be a sequence such that $h_{j}\to 0$ as $j\to\infty$ and $x\in \Grid{h_j} $  for all $j\in\mathbb N$. Then
   \[
   Y^{{\rm LR},h_{j}}\Rightarrow Y^{{\rm LR}}\quad\text{as } j\to\infty \quad \text{on } D ([0,\infty),\mathbb R),
   \]
   and in particular,  for all $t>0$ outside of a countable set and $g\in C_0(\Omega)$,
   \begin{equation*}\label{eq:proj_convergence}
       \mathbb E_x\big[g\big( Y^{{\rm LR},h_j}_t\big)\big]\to \mathbb E_x\left[g\left(Y^{{\rm LR}}_t\right)\right].
   \end{equation*}
\end{lemma}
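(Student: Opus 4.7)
The overall strategy is to start from the weak convergence $Y^{h_j}\Rightarrow Y$ on $D_{c,d}([0,\infty),\mathbb R)$ provided by Proposition \ref{prop:gruntolevy} (with $c,d$ chosen so that the fast-forwarding maps in play are well-defined on the relevant subspaces, using Remark \ref{rmk:recur}), and then push this weak convergence through the composition of maps defining $\Y{LR}$ and $\Yh{LR}$ by applying either the Continuous Mapping Theorem (Theorem \ref{thm:CMT}) or its killing-compatible version (Corollary \ref{cor:sparated_CMT}) at every step. The key verification at each stage is that the limit path is almost surely a continuity point of the next map, and Proposition \ref{prop:Ypath_continuity_point} has been tailored precisely to supply this for each intermediate process that appears.

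I would handle the reflecting and fast-forwarding layers first, since these cause no measurability trouble. For the $\mathrm{N^*}$ cases I must first reconcile the discrete barrier with the continuous one: writing $\Nal{a}(f)=\Nal{0}(f-a)+a$ and using Lipschitz continuity of $\Nal{0}$ on $D([0,\infty),\mathbb R)$, I would conclude $\Nal{h-1}(Y^{h_j})\Rightarrow \Nal{-1}(Y)=\Nalone(Y)$ from $Y^{h_j}\Rightarrow Y$ and $h_j\to 0$. For the fast-forwarding layers ($\Nlone$, $\Nrone$, or both), I would invoke Theorem \ref{thm:ff}, Corollary \ref{cor:ff_b} or Corollary \ref{cor:ff_twosid}, whose hypotheses \ref{A1}, \ref{A2} and the Lebesgue-null condition at the barrier are verified almost surely on the relevant limit paths by Proposition \ref{prop:Ypath_continuity_point} (combined with Proposition \ref{prop:FF_repr_ab} to handle the two-sided composition in the $\mathrm{NN}$ case).

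The main obstacle is the killing layer: the maps $\Dl,\Dr$ are not in general measurable on $D([0,\infty),\mathbb R)$, so CMT cannot be applied directly to them. Here I would use Corollary \ref{cor:sparated_CMT} with $f=\Dr$ (or $\Dl$ or their composition), $g_n=\Drh[h_n]$ for the prelimit (measurable by Proposition \ref{prop:killmaps_measurable}), and $g_0=\Drzero$ for the limit. The identifications in \eqref{eq:measurforCMT} follow from Corollary \ref{cor:for_measurability}: on paths valued in $1+h\mathbb Z$ the discrete killing map agrees with $\Dr$, while for the limit path of $Y$ (or an intermediate reflected/fast-forwarded version of $Y$), the regularity of $Y$ at the endpoints gives $\tau^{\cdot}_{(-\infty,1]}=\tau^{\cdot}_{(-\infty,1)}$ $\mathbb P_x$-a.s. (Remark \ref{rmk:Yproperties}(ii), transported through the intermediate maps) so $\Drzero$ agrees with $\Dr$ on the limit. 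The continuity hypothesis on $f$ is again supplied by Propositions \ref{prop:SMkill1}, \ref{prop:SMkill-1} and Proposition \ref{prop:Ypath_continuity_point}. The composition $\Dl(\Dr(\cdot))$ (case $\mathrm{DD}$) is treated analogously, noting that $\Dl$ preserves grid values so the discrete identification cascades.

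Finally, for the pointwise statement at individual times $t>0$, I would apply Proposition \ref{prop:asthenweak}: the càdlàg process $\Y{LR}$ has at most countably many fixed-time discontinuities, and outside this countable set $\mathbb P_x[\pi_t(\Y{LR})=\pi_{t-}(\Y{LR})]=1$, so $\pi_t$ is a.s.\ continuous at $\Y{LR}$ and the CMT yields $\pi_t(Y^{\mathrm{LR},h_j})\Rightarrow \pi_t(\Y{LR})$ on $\mathbb R$. Since $g\in C_0(\Omega)\subset C_b(\mathbb R)$, this gives the desired convergence of expectations.
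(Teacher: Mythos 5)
Your proposal is correct and mirrors the paper's proof: it builds the same CMT/separated-CMT cascade on top of Proposition \ref{prop:gruntolevy}, verifies a.s.\ continuity of the limit paths via Proposition \ref{prop:Ypath_continuity_point}, handles measurability of the killing layer via Proposition \ref{prop:killmaps_measurable}, Corollary \ref{cor:for_measurability} and Corollary \ref{cor:sparated_CMT}, and extracts the pointwise statement from Proposition \ref{prop:asthenweak}. The only local deviation is in the $\mathrm{N^*}$ cases: to reconcile the moving barrier $h_j-1$ with the fixed barrier $-1$, you use the additive identity $\Nal{a}(f)=\Nal{0}(f-a)+a$ together with the isometry $d_{J_1}(f+c,g+c)=d_{J_1}(f,g)$, while the paper instead uses the multiplicative identity $c_j\Nal{h_j-1}(f)=\Nalone(c_jf)$ with $c_j=(1-h_j)^{-1}$ and the bound \eqref{eq:J1scaleout}; both are one-line algebraic facts and yield the same conclusion \eqref{eq:Nal_conv}, the additive version being marginally more direct since it collapses to a single application of Lipschitz continuity of $\Nal{0}$ on the Skorokhod-representation versions.
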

\begin{proof}  Note that we only need to prove the six weak convergences on $D([0,\infty),\mathbb R)$, as then the last convergence follows immediately from Proposition \ref{prop:asthenweak}, as 
$\mathbb P_x[\Y{LR}_{t-}\neq \Y{LR}_{t}]= 0 
$ for all $t>0$ outside of a countable set \cite[Section 16, page 174]{MR1700749}.  We recall the definitions of the killing maps in  Definition \ref{def:DD} and Proposition \ref{prop:killmaps_measurable}, of the (measurable) fast-forwarding maps in Definitions \ref{def:ff} and \ref{def:ff_b_twosd} and Remark \ref{rmk:convffmaps}, and the (continuous) reflecting maps in Definition \ref{def:refl}. Also, in each case we use, without mention, Proposition \ref{prop:gruntolevy} and every convergence is understood ``as $j\to\infty$''.
\begin{enumerate}
\item $\rm DD$: by Corollary \ref{cor:for_measurability} and Proposition \eqref{prop:exit_open_closed}   we have  $\Yh{DD}=\Dl(\Dr(Y^h))=\Dlp{h}(\Drp{h}(Y^h))$ and $\Y{DD}=\Dl(\Dr(Y))=\Dlp{0}(\Drp{0}(Y))$. Then the result follows by Corollary \ref{cor:sparated_CMT} and Propositions  \ref{prop:killmaps_measurable},  \ref{prop:SMkill1}, \ref{prop:SMkill-1} and \ref{prop:Ypath_continuity_point}.   

\item $\rm DN$: By Propositions \ref{prop:Ypath_continuity_point} and Corollary \ref{cor:for_measurability}, $\Nrone(Y)$ is $\mathbb P_x$-a.s. a continuity point of $\Dl$, $\Dl(\Nrone(Y))=\Dlp{0}(\Nrone(Y))$ and $\Dl(\Nrone(Y^h))=\Dlp{h}(\Nrone(Y^h))$. Moreover,   Corollary \ref{cor:ff_b} and  Proposition \ref{prop:Ypath_continuity_point} prove that  $\Nrone(Y^{h_j})\Rightarrow \Nrone(Y)$ on $D([0,\infty),\R)$. Then we conclude with Corollary \ref{cor:sparated_CMT}.
\item $\rm ND$: Similar as the above and omitted.
\item $\rm NN$: By Proposition \ref{prop:Ypath_continuity_point}, $Y$ is $\mathbb P_x$-a.s. a continuity point of $\Nlone(\Nrone)$ and by Proposition \ref{prop:recur} the paths of $Y^h$ belong to $D_{-1,1}([0,\infty),\mathbb R)$ for any $h>0$. Then the result follows from Theorem \ref{thm:CMT} and Corollary \ref{cor:ff_twosid}.  
\item $\rm N^*D$: If we show that
\begin{equation}\label{eq:Nal_conv}
\Nal{h_j-1}(Y^{h_j}) \Rightarrow \Nalone(Y) \quad\text{on}\quad D([0,\infty),\mathbb R),
\end{equation}
then the result follows from Proposition \ref{prop:Ypath_continuity_point} and Corollaries \ref{cor:for_measurability} 
and \ref{cor:sparated_CMT}. To do so, by Theorem \ref{thm:skor_repr} we denote by the same notation versions of $Y$ and $Y^{h_j}$ such that  $Y^{h_j}\to Y$ in  $D([0,\infty),\mathbb R)$  ($\mathbb P_x$-a.s.). Then, by continuity of $\Nalone$, we known that $\Nalone(Y^{h_j})\to \Nalone(Y)$ in $D([0,\infty),\mathbb R)$. Thus, by the triangle inequality, if we to show that $d_{J_1}(\Nal{h_j-1}(Y^{h_j}), \Nalone(Y^{h_j}))\to 0$, then \eqref{eq:Nal_conv} follows by Proposition \ref{prop:asthenweak}. And so, observing that $
c_j\Nal{h_j-1}(Y^{h_j})=  \Nalone(c_jY^{h_j})
$ pointwise if $c_j=(1-h_j)^{-1}$, then, using   \eqref{eq:J1scaleout} in the second inequality,
\begin{align*}
d_{J_1}\left( \Nal{h_j-1}(Y^{h_j}),\Nalone(Y^{h_j})\right)
&\le d_{J_1}\left(\Nal{h_j-1}(Y^{h_j}),c_j\Nal{h_j-1}(Y^{h_j})\right)\\
&\quad+d_{J_1}\left( \Nalone(c_jY^{h_j}),\Nalone( Y^{h_j} )\right)\\
&\le (c_j-1)+d_{J_1}\left( \Nalone(c_jY^{h_j}),\Nalone( Y^{h_j} )\right),
\end{align*}
and the second term  vanishes by continuity of $\Nalone$ and    $c_jY^{h_j}\to Y$in $D([0,\infty),\mathbb R)$ again by triangle inequality and \eqref{eq:J1scaleout}.

\item $\rm N^*N$: by noting that the convergence \eqref{eq:Nal_conv} holds on $D_{-\infty,1}([0,\infty),\mathbb R)$, this case follows by  Theorem \ref{thm:CMT} and Proposition \ref{prop:Ypath_continuity_point}.

 
\end{enumerate}
\end{proof}

 \begin{corollary}\label{cor:N*NeqNN}
Assume \ref{H0}. Then the process $\Y{N^*N}$ equals the process $[\Nalone\Narone](Y)$ in law.
\end{corollary}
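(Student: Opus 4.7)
The strategy is to identify both $\Y{N^*N}=\Nrone(\Nalone(Y))$ and $[\Nalone\Narone](Y)$ as weak limits of discrete approximations constructed from the Gr\"unwald-type process $Y^h$, and then conclude by uniqueness of the weak limit. Fix $x\in \Grid{h_j}\cap(-1,1)$ along a sequence $h_j\to 0$. By Lemma \ref{lem:skor} case 6 we already have
\[
\Y{N^*N,h_j}=\Nrone(\Nal{h_j-1}(Y^{h_j}))\;\Longrightarrow\; \Y{N^*N} \quad \text{on } D([0,\infty),\mathbb R),
\]
so it suffices to show the same sequence converges weakly to $[\Nalone\Narone](Y)$. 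On the other hand, Proposition \ref{prop:gruntolevy} with $(a,b)=(-1,1)$ gives $Y^{h_j}\Rightarrow Y$ on $D_{-1,1}([0,\infty),\mathbb R)$, and since $[\Nalone\Narone]$ is Lipschitz on this space (remark after Definition \ref{def:refl}), the Continuous Mapping Theorem \ref{thm:CMT} yields $[\Nalone\Narone](Y^{h_j})\Rightarrow[\Nalone\Narone](Y)$.

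To bridge these two approximations, I would prove that for each $h$ the two-sided Skorokhod reflection $[\Nal{h-1}\Nar{1-h}](Y^h)$ is a discrete-space continuous-time Markov chain on $\Gridh\cap[h-1,1-h]$ whose transition rate matrix is exactly $G^{N^*N}_{n+2}$ of \eqref{Glrn+2}; combined with Proposition \ref{prop:Yh_eq_Gh} this identifies $[\Nal{h-1}\Nar{1-h}](Y^h)$ in law with $\Nrone(\Nal{h-1}(Y^h))$. At the left boundary the computation is immediate: the single downward step of $Y^h$ is reflected back to $h-1$, producing the diagonal rate $b_1^l=\Gru_0+\Gru_1$ and reflection rates $b_i^l=\Gru_i$ from Table \ref{mainTable}, consistent with Proposition \ref{prop:NawaitYh}. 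At the right boundary the regulator $\Nar{1-h}$ snaps every upward jump instantaneously back to $1-h$, and a direct one-step computation shows the resulting rates coincide with the fast-forwarding rates $b_i^r=-\sum_{j=0}^{i-1}\Gru_j$ obtained in Theorem \ref{thm:ffwaitYh}.

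A scaling argument mirroring Lemma \ref{lem:skor} case 5, via \eqref{eq:J1scaleout} and the Skorokhod Representation Theorem \ref{thm:skor_repr} applied to $c_jY^{h_j}\to Y$ with $c_j=(1-h_j)^{-1}$, then shows that $[\Nal{h_j-1}\Nar{1-h_j}](Y^{h_j})$ and $[\Nalone\Narone](Y^{h_j})$ share the weak limit $[\Nalone\Narone](Y)$. Combining the three steps gives $\Nrone(\Nal{h_j-1}(Y^{h_j}))\Rightarrow[\Nalone\Narone](Y)$, and uniqueness of the weak limit forces $\Y{N^*N}\overset{d}{=}[\Nalone\Narone](Y)$ for $x$ in the dense set; the Feller property of Proposition \ref{prop:cont_semigroup} extends the identity in law to every starting point. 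The main obstacle is the rate-matrix verification at the right boundary: although $\Narone$ and $\Nrone$ are genuinely different pathwise operations in general, on the grid $\Gridh$ the spectral positivity of $Y^h$ (downward moves are single steps) collapses the persistent Skorokhod regulator into an instantaneous return to $1-h$, producing exactly the fast-forwarding transition kernel — a matching that holds on the grid but not pathwise and only emerges after a careful one-step analysis at the boundary states.
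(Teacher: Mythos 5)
Your proof follows essentially the same route as the paper's: establish that $\Nrone(\Nal{h-1}(Y^{h}))$ and $[\Nal{h-1}\Nar{1-h}](Y^{h})$ agree in law on the grid (which is exactly the content of Propositions \ref{prop:NawaitYh} and \ref{prop:waitFF} that the paper cites in lieu of your explicit rate-matrix verification), push $[\Nal{h_j-1}\Nar{1-h_j}](Y^{h_j})$ weakly to $[\Nalone\Narone](Y)$ via the $c_j=(1-h_j)^{-1}$ scaling trick together with Lipschitz continuity of $[\Nalone\Narone]$, and conclude by matching the limit laws on a dense set of starting points and extending through the resolvent continuity of Proposition \ref{prop:cont_semigroup}. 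Your remark that the discrete right-boundary match is only distributional and not pathwise is accurate (and in fact reads more precisely than the paper's ``$\mathbb P_x$-a.s.'' phrasing, since a jump of $Y^h$ landing strictly above $1$ produces different sample paths under $\Nrone$ and $\Nar{1-h}$ even though the resulting rate matrices coincide); the one loose phrase is your appeal to ``the Feller property of Proposition \ref{prop:cont_semigroup}'' --- that proposition delivers strong Markov plus $C_0(\Omega)$-preserving resolvents rather than Feller (invoking Feller here would be circular, as it is established only via Theorem \ref{thm:main_II}, which relies on this corollary) --- but the resolvent continuity is precisely what the extension step needs, so your argument goes through.
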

\begin{proof} Let $x\in(-1,1)$ and a corresponding sequence $\{h_j:j\in\mathbb N\}$ as in Lemma \ref{lem:skor} and let $Y^{h_j}_0=x$ for all $j\in\mathbb N$ so that $Y^{{\rm N^*N},h_j}_0=[\Nal{{h_j}-1}\Nar{1-{h_j}}](Y^{h_j})_0=x$. Then, using Propositions \ref{prop:NawaitYh} and \ref{prop:waitFF} with the piecewise constant paths of $Y^h$, $Y^{{\rm N^*N},h_j}=[\Nal{h_j-1}\Nar{1-h_j}](Y^{h_j})$ $\mathbb P_x$-a.s. and by same trick with multiplication by $c_j=(1-h_j)^{-1}$ in step 5 of the proof of Lemma \ref{lem:skor}, we obtain that $[\Nal{{h_j}-1}\Nar{1-{h_j}}](Y^h)_t\Rightarrow [\Nalone\Narone](Y)_t$ as $j\to\infty$ on $[-1,1]$ for all  $t$ outside of a countable set. And so we obtain, by the Dominated Convergence Theorem,
\[
\int_0^\infty e^{-\LTp t}\mathbb E_x\big[g\big(\Y{N^*N}_t\big)\big]\, \dd t   = \int_0^\infty e^{-\LTp t} \mathbb E_x\big[g\big([\Nalone\Narone](Y)_t\big)\big] \, \dd t \]   for  and for any $\LTp>0$, $g\in C[-1,1]$ and $x$ in a dense subset of $(-1,1)$. Then, by Proposition \ref{prop:cont_semigroup} and \cite[Remark 2.21]{BKT20}\footnote{Note that this remark derives its conclusion for the two-sided reflection $[\Nalone\Narone](Y)$.},  the resolvents agree everywhere on $[-1,1]$. As    for each $x\in [-1,1]$, $t\mapsto \mathbb E_x[g(\Y{N^*N}_t)]$ is right continuous (Remark \ref{rmk:cont_semigroup}) and $t\mapsto \mathbb E_x[g([\Nalone\Narone](Y)_t)]$ is continuous (\cite[Remark 2.21]{BKT20}), we obtain that the semigroups agree on $C[-1,1]$ by \cite[Lemma 1.1, Chapter I]{MR0193671} and we are done. \end{proof}

\subsection{Pathwise characterisation of backward and forward equations}

\begin{theorem}\label{thm:main_II}[Pathwise characterisation] Under assumption \ref{H0}, the six processes in Table \ref{explicitProcesses_II} induce Feller semigroups on $C_0(\Omega)$ with backward and forward generators given in Table \ref{explicitProcesses_II}.  
\end{theorem}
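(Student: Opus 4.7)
The plan is to combine three ingredients already in place: the strong Trotter--Kato convergence of the interpolated Gr\"unwald semigroups from Part I \cite[Theorem 5.1]{BKT20}, the pathwise identification on grid points (Proposition \ref{prop:Yh_eq_Gh}), and the $J_1$-Skorokhod convergence of the modified processes (Lemma \ref{lem:skor}). Denote by $\{T_t^{\rm LR}\}_{t\ge 0}$ the strongly continuous contraction semigroup on $C_0(\Omega)$ generated by $(\Gen,\BC)$ constructed in Part I, and by $\{T_t^{h,\rm LR}\}_{t\ge 0}$ the associated interpolated Gr\"unwald semigroup. Part I gives $T_t^{h_j,\rm LR}g\to T_t^{\rm LR}g$ in $C_0(\Omega)$, uniformly on compact intervals of $t$, for every $g\in C_0(\Omega)$.

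First I would identify $T_t^{\rm LR}g(x)=\mathbb E_x[g(\Y{LR}_t)]$ on a dense set of starting points. By Proposition \ref{prop:Yh_eq_Gh}, for each $x\in \Gridh$ and each $t\ge 0$, $T_t^{h,\rm LR}g(x)=\mathbb E_x[g(\Yh{LR}_t)]$. For a fixed $x\in(-1,1)$, pick a sequence $h_j=h_j(x)\to 0$ with $x\in\Grid{h_j}$ for all $j$ as in Lemma \ref{lem:skor}. Combining the Trotter--Kato convergence with Lemma \ref{lem:skor} yields
\[
T_t^{\rm LR}g(x)=\mathbb E_x[g(\Y{LR}_t)]
\]
for all $t$ outside a countable exceptional set.

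Next I would extend this identity to every $(x,t)\in\Omega\times[0,\infty)$. Taking Laplace transforms in $t$ via bounded convergence gives, for each $\LTp>0$,
\[
R_\LTp^{\rm LR}g(x)=\int_0^\infty e^{-\LTp t}\,\mathbb E_x[g(\Y{LR}_t)]\,\dd t
\]
for $x$ in a dense subset of $(-1,1)$. Both sides are continuous in $x$ on $\Omega$: the left because $R_\LTp^{\rm LR}g=\int_0^\infty e^{-\LTp t}T_t^{\rm LR}g\,\dd t\in C_0(\Omega)$ by Part I, the right by Proposition \ref{prop:cont_semigroup}. Hence the two resolvents coincide on all of $\Omega$. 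Since $t\mapsto\mathbb E_x[g(\Y{LR}_t)]$ is right continuous for each fixed $x\in\Omega$ (Remark \ref{rmk:cont_semigroup}) and $t\mapsto T_t^{\rm LR}g(x)$ is continuous, uniqueness of Laplace transforms on the cone of right continuous bounded functions (see \cite[Lemma 1.1, Chapter I]{MR0193671}) forces $T_t^{\rm LR}g(x)=\mathbb E_x[g(\Y{LR}_t)]$ for every $t\ge 0$ and $x\in\Omega$. This shows that the semigroup from Part I is precisely the transition semigroup of $\Y{LR}$; positivity and contraction are then automatic from the expectation representation, so each $\Y{LR}$ is a Feller process with backward generator as listed in Table \ref{explicitProcesses_II}. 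The forward $L^1$-generator assertion is inherited directly from Part I.

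The main obstacle is passing from the fragmentary identity (grid starting points, co-countable times) to the full statement on $\Omega\times[0,\infty)$. The Laplace-transform / resolvent argument resolves this, crucially using the continuity-in-$x$ of $R_\LTp^{\rm LR}g$ from Proposition \ref{prop:cont_semigroup} and the right continuity in $t$ from the strong Markov property proved in the same proposition. A secondary technical point is the existence, for each $x\in(-1,1)$, of an approximating grid sequence $h_j(x)\to 0$ with $x\in\Grid{h_j(x)}$ for all $j$, which is already guaranteed by the setup of Lemma \ref{lem:skor}.
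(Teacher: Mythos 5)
Your argument reproduces faithfully the paper's treatment of cases 3 and 4 (${\rm ND}$ and ${\rm NN}$): identify the interpolated Gr\"unwald semigroup with $\mathbb E_x[g(\Yh{LR}_t)]$ on grid points via Proposition \ref{prop:Yh_eq_Gh}, pass to the limit along $h_j(x)$ using Lemma \ref{lem:skor} and the Trotter--Kato convergence of Part I, obtain equality of Laplace transforms on a dense set of $x$ by dominated convergence, extend to all $x\in\Omega$ by the continuity of both resolvents (Proposition \ref{prop:cont_semigroup}), and then identify the semigroups via right continuity in $t$ and \cite[Lemma 1.1, Chapter I]{MR0193671}. All of those steps are sound.

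The gap is that you run this argument uniformly over all six rows of Table \ref{explicitProcesses_II}, and in particular you invoke the convergence $T_t^{h,\rm LR}g\to T_t^{\rm LR}g$ from \cite[Theorem 5.1]{BKT20} for the ${\rm N^*D}$ and ${\rm N^*N}$ boundary conditions. Under the standing hypothesis \ref{H0} alone this is not available: as the introduction states explicitly, the convergence of the interpolated Gr\"unwald schemes in the ${\rm N^*}$ cases is established in Part I only under the stronger regularity assumption $(\mathrm{H1})$, which is needed to control the singularities arising from the boundary interpolation. Theorem \ref{thm:main_II} is asserted under \ref{H0} only, so your Trotter--Kato step is not justified for rows 5 and 6 (and the corollary following the theorem excludes ${\rm DD}$, ${\rm N^*D}$, ${\rm N^*N}$ from the list of cases where the full scheme convergence holds under \ref{H0}). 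The paper sidesteps this by invoking known pathwise identifications for rows 1, 2, 5, 6 via \cite[Remark 2.21]{BKT20} together with Corollary \ref{cor:N*NeqNN}, and reserves the Skorokhod/Trotter--Kato machinery for the genuinely new cases ${\rm ND}$ and ${\rm NN}$. To repair your argument you would either have to strengthen the hypothesis to $(\mathrm{H1})$ or, as the paper does, handle the killed and reflected cases by a separate route that does not rely on \cite[Theorem 5.1]{BKT20}.
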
 
\begin{proof}
 By \cite[Corollaries 5.2 and 5.4]{BKT20}, the operators in the last column of Table \ref{explicitProcesses_II} generate Feller semigroups on $C_0(\Omega)$ and the operators in the second column equal the restriction to $L^1[-1,1]$ of the respective dual operators on $C_0(\Omega)^*$, the dual space of $C_0(\Omega)$.   For cases 1, 2, 5 and 6 in Table \ref{explicitProcesses_II} the conclusion follows by \cite[Remark 2.21]{BKT20} and Corollary \ref{cor:N*NeqNN}. For case 3 or 4 in Table \ref{explicitProcesses_II}, denote by $P$ the Feller semigroup on $C_0(\Omega)$ generated by $(\Cr,\BC)$, and let $P^h$ be the respective approximating semigroup as constructed in  \cite[Theorem 5.1]{BKT20}. Then, by \cite[Theorem 5.1]{BKT20}, $P_t^hg(x)\to P_t g(x) $  as $h\to 0$ for any $g\in C_0(\Omega)$, $x\in \Omega$ and $t>0$. By Proposition \ref{prop:Yh_eq_Gh},  $P_t^{h}g(x)=\mathbb E_x[g(Y_t^{{\rm LR},{h}} )]$ for any $x\in {\rm Grid}_h$, $t>0$ and $g\in C_0(\Omega)$. And so we choose $\{h_j\}_{j\in\mathbb N}$ as in Lemma \ref{lem:skor}, and the same lemma implies that   as $j\to\infty$  
 $$\mathbb E_x\big[g\big(Y_t^{{\rm LR},{h_j}}\big)\big]\to  \mathbb E_x\big[g\big(\Y{LR}_t\big)\big] \quad \text{for almost every }t>0.$$  Therefore, by the Dominated Convergence Theorem, we obtain that for any $\LTp>0$, $g\in C_0(\Omega)$ and $x$ in a dense subset of $\Omega$, \[
 \int_0^\infty e^{-\LTp t}P_t g(x)\, \dd t=\mathbb  \int_0^\infty e^{-\LTp t}\mathbb E_x\big[g\big(\Y{LR}_t\big)\big]\, \dd t,\]
 and by Proposition \ref{prop:cont_semigroup} these resolvents agree for all $x\in\Omega$. Then, by Remark \ref{rmk:cont_semigroup}, continuity of $t\mapsto P_t g(x)$ and  \cite[Lemma 1.1, Chapter I]{MR0193671}, the semigroups agree on $C_0(\Omega)$.   

  \end{proof}

In the following corollary $\Omega_\delta$ denotes the one-point compactification of $\Omega$ whenever $\Omega$ is not compact.
\begin{corollary} Under assumption \cite[(H1)]{BKT20}, if the initial conditions of the Feller processes constructed in \cite[Lemma 3.14]{BKT20} converge weakly on $\Omega_\delta$ to the initial condition of $\Y{LR}$, then these processes converge weakly to $\Y{LR}$ on $D([0,\infty),\Omega_\delta)$.  This is true under \ref{H0} for the cases ND, NN and DN.
\end{corollary}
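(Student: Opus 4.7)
The plan is to reduce the statement to a standard Feller process convergence theorem in Skorokhod space — specifically the result already invoked in Proposition \ref{prop:gruntolevy} (Kallenberg's Theorem 17.25, equivalently Ethier-Kurtz) — which asserts that strong convergence of Feller semigroups on $C_0(E)$ together with weak convergence of initial conditions on $E$ implies weak convergence of the associated c\`adl\`ag processes on $D([0,\infty),E)$.

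First I would take $E=\Omega_\delta$. Under (H1), Part I's Theorem 5.1 supplies the Trotter-Kato convergence of the approximating semigroups $P^h$ from \cite[Lemma 3.14]{BKT20} to the semigroup of $\Y{LR}$ for all six boundary combinations. The approximating processes are continuous-time Markov chains with bounded generators (Proposition \ref{prop:Yh_eq_Gh}), so they extend naturally to Feller processes on $\Omega_\delta$ by declaring $\delta$ an absorbing cemetery and using the canonical isomorphism $C_0(\Omega)\cong\{g\in C(\Omega_\delta):\,g(\delta)=0\}$. Likewise the limit $\Y{LR}$ is Feller on $C_0(\Omega)$ by Theorem \ref{thm:main_II} and extends to a Feller process on $\Omega_\delta$ in the same way. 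Applying the convergence theorem then yields weak convergence on $D([0,\infty),\Omega_\delta)$.

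For the second claim, under \ref{H0} alone the same argument goes through for exactly the cases DN, ND and NN, because the Trotter-Kato convergence needed for these three cases is precisely what the proof of Theorem \ref{thm:main_II} extracts under \ref{H0}: namely \cite[Theorem 5.1]{BKT20} for ND and NN (rows 3 and 4), and \cite[Remark 2.21]{BKT20} (via Corollary \ref{cor:N*NeqNN}) for DN (row 2). The cases involving an $\mathrm{N}^*$ boundary are excluded here because the interpolated scheme of Part I requires (H1) to control the singularities at the reflecting boundary, as discussed in \cite[Section 1.3]{BKT20}.

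The main obstacle is not the convergence argument itself but verifying the setup on $\Omega_\delta$: one must confirm that the strongly continuous contraction semigroups $P^h$ and $P$ on $C_0(\Omega)$ extend consistently to Feller semigroups on $C_0(\Omega_\delta)$ and that strong convergence on $C_0(\Omega)$ lifts to $C_0(\Omega_\delta)$. This is routine once one fixes the convention that trajectories are sent to $\delta$ upon absorption at a Dirichlet boundary, so that the semigroups act on $\{g\in C(\Omega_\delta):\,g(\delta)=0\}$ and can then be extended to all of $C(\Omega_\delta)$ by declaring them to fix constants at $\delta$. The remaining requirement — weak convergence of the initial distributions on $\Omega_\delta$ — is the stated hypothesis.
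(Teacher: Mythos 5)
Your approach matches the paper's: extend each Feller semigroup from $C_0(\Omega)$ to $C(\Omega_\delta)$ via the cemetery construction, invoke the strong semigroup (Trotter--Kato) convergence from Part I's Theorem 5.1, identify the limit pathwise via Theorem \ref{thm:main_II}, and apply Kallenberg's Theorem 17.25 to upgrade to weak convergence on $D([0,\infty),\Omega_\delta)$. The extra detail you give on how the extension to $\Omega_\delta$ works (functions fixed at $\delta$, trajectories sent to $\delta$ on killing) is correct and consistent with the reference in the paper to \cite[page 12]{MR3156646}.

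One inaccuracy in your account of the second claim is worth flagging. You attribute the DN case under \ref{H0} to \cite[Remark 2.21]{BKT20} ``via Corollary \ref{cor:N*NeqNN}''. That corollary concerns $\Y{\rm N^*N}$, not $\Y{\rm DN}$, so the citation is misplaced. More substantively, \cite[Remark 2.21]{BKT20} is a resolvent-identification tool used in the proof of Theorem \ref{thm:main_II} to pin down the limit process; it does not by itself supply the strong semigroup convergence that Kallenberg's Theorem 17.25 requires as input. For the present Corollary, all three of ND, NN and DN must get their Trotter--Kato convergence from \cite[Theorem 5.1]{BKT20} under \ref{H0}; the identification step (via Theorem \ref{thm:main_II}) then tells you what the limiting Feller process is. Your explanation for why only the ${\rm N^*}$ cases are excluded under \ref{H0} is also incomplete: DD is likewise excluded in the statement, and DD has no ${\rm N^*}$ boundary, so the interpolation-at-the-reflecting-boundary argument you offer does not cover it. The exclusion pattern comes from exactly which cases \cite[Theorem 5.1]{BKT20} covers under \ref{H0} versus \cite[(H1)]{BKT20}; the paper's own proof leaves this implicit by simply citing that theorem.
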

\begin{proof}
Recalling the canonical extension of a Feller process on $\Omega$ to $\Omega_\delta$ (see, e.g., \cite[page 12]{MR3156646}), this is an immediate consequence of Theorem \ref{thm:main_II}, \cite[Theorem 5.1]{BKT20} and \cite[Theorem 17.25]{MR1464694}. 

\end{proof}

Recall that if $g\in \mathcal D$, then  $\Gen g = 0$ if and only if $P_t g=g$ for all $t>0$, where $P$ is a strongly continuous semigroup  with generator  $(\Gen, \mathcal D)$. Then, from the domain representation of $(\Cl,{\rm NN})$ in \cite[Table 4]{BKT20}    we see that Theorem \ref{thm:main_II} implies the following corollary.
\begin{corollary}
Assume \ref{H0}. Then $\mu(\dd x)= 2^{-1}\dd x$ on $[-1,1]$   is an invariant probability measure for $\Y{NN}$, i.e. $\mathbb P_{\mu}[\Y{NN}_t\in \dd x]=\mu(\dd x)$ for all $t>0$.
\end{corollary}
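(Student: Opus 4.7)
The plan is to exploit the equivalence stated just before the corollary: since by Theorem \ref{thm:main_II} the forward generator of $\Y{NN}$ is $(\Cl,\mathrm{NN})$ on $L^1[-1,1]$, it suffices to show that the constant density $f\equiv \tfrac12$ lies in $\mathcal{D}(\Cl,\mathrm{NN})$ and satisfies $\Cl f=0$. Duality between the forward and backward semigroups will then immediately yield invariance of $\mu$.

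First I would observe that $\Cl=\partial^\psi_+$ is a Caputo-type operator, meaning that differentiation is applied before the nonlocal averaging (in contrast to the Riemann--Liouville form $\RLl=D^\psi_+$). Consequently $\Cl$ annihilates constants, so $\Cl f = 0$ holds pointwise for $f\equiv\tfrac12$. Next I would verify that the constant function belongs to $\mathcal{D}(\Cl,\mathrm{NN})$ using the explicit description of this domain given in \cite[Table 4]{BKT20}. The NN boundary conditions are the pair of nonlocal conditions at $-1$ and $1$ of Caputo flavour; each involves an integral against the tail kernel $\Phi$ of the derivative of $f$, and so collapses to $0=0$ when $f$ is constant. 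This check, while the only one that is not purely formal, is a one-line verification from the tabulated boundary forms.

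Having established $f\equiv\tfrac12\in\mathcal{D}(\Cl,\mathrm{NN})$ with $\Cl f=0$, the remark preceding the statement gives $P^*_t f=f$ for every $t>0$, where $P^*$ denotes the forward $L^1$-semigroup. By duality between $P^*$ and the backward Feller semigroup $P$ of $\Y{NN}$, for any $g\in C_0(\Omega)$
$$\mathbb{E}_\mu\!\left[g(\Y{NN}_t)\right] = \int_{-1}^1 P_t g(x)\, f(x)\,\dd x = \int_{-1}^1 g(x)\, P^*_t f(x)\,\dd x = \int_{-1}^1 g\,\dd\mu,$$
which is precisely the invariance of $\mu$. The only mild obstacle is confirming that the constant function satisfies the nonlocal NN domain conditions recorded in \cite[Table 4]{BKT20}; once that check is made, the conclusion is automatic from Theorem \ref{thm:main_II} and the standard semigroup-generator duality.
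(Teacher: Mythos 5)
Your proposal is correct and matches the paper's own approach: the paper likewise observes that $\Gen g = 0$ iff $P_t g = g$ for a generator $(\Gen,\mathcal D)$, reads off from the domain representation of the forward generator $(\Cl,\mathrm{NN})$ in \cite[Table 4]{BKT20} that the constant function lies in its kernel (Caputo operator annihilating constants; $\mathrm{NN}$ conditions trivially satisfied), and invokes Theorem \ref{thm:main_II}. You have simply spelled out the duality step that the paper leaves implicit.
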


Let us also observe  that  from the domain representation of $(\RLl,{\rm N^*N})$ in \cite[Table 4]{BKT20} (and recalling \cite[Remark 2.15]{BKT20})  we see that $\dd W( y+1)/W(2)$ is an invariant probability measure for $\Y{N^*N}$, which was proved in \cite[Theorem 2.ii]{MR1995924} (here $W\in C[0,\infty)$ is the  function with Laplace transform $1/\sym$).

\subsection{Resolvent measures for one-sided processes}\label{sec:resolvnew}
We provide below a new representation for the resolvent measures and an exit problem that involve fast-forwarding. To ease the comparison with \cite[Theorem 8.11]{MR2250061}, we state it  for the case of fast-forwarding the spectrally negative L\'evy process $-Y$. We assume that any function $g\in C[0,a]$ is extended by 0 on $\mathbb R\backslash [0,a] $ upon being convolved,  we denote by $\star$ the convolution operator and we define $Ig(x)=g\star 1(x)=\int_0^xg(y)\,\dd y$ ($x\ge0$) for the constant function $ 1\in C[0,a]$.  Define for any $q\ge0$ the \textit{operator scale function} $Z^{(q)}$ acting on   $g\in C[0,a]$  as 
 \[
 Z^{(q)}[g](x) = g(x)+\sum_{n=1}^\infty q^n (W\star)^n g (x),\quad x\in[0,a],
  \]
   and we also let $Z^{(q)}(x):= Z^{(q)}[ 1](x)$, where $W$ is the positive non-decreasing function with Laplace transform $\int_0^\infty e^{-\xi x}W(x)\,\dd x=\sym(\xi) ^{-1}$, $\Re\xi>0$, and define 
   \[W^{(q)}(x)=W(x)+\sum_{n=1}^\infty q^n (W\star)^{n}W(x),\quad x>0,\]
   extended by 0 on $(-\infty,0)$ (cf. \cite[Theorem 2.1]{MR3014147}).
   Note that $Z^{(q)}[g]$ defines an absolutely uniformly convergent series, in the sense that for all $x\in[0,a]$
  $$\Supnorm{Z^{(q)}[g]}{[0,x]}\le \Supnorm{Z^{(q)}[|g|]}{[0,x]}\le \|g\|_{C[0,x]}\left(1+q W(x)\sum_{n=0}^\infty \frac{(qW(x)x)^n}{n!}\right).$$
 
 \begin{remark}\label{rmk:scalefn}
 Denoting $\dd Z^{(q) }/\dd x$  by $(Z^{(q) })'$, observe that 
  \[
q Z^{(q)}[W]= (Z^{(q) })'= q  W^{(q)},
\]
and for $ 1,g\in C[0,a]$, 
\[
  qZ^{(q)}[W\star 1]=qIZ^{(q)}[W] =I(Z^{(q)})'=Z^{(q)}-1, 
  \]
  and $IZ^{(q)}[g]=Z^{(q)}[g\star 1]=g\star Z^{(q)}$. Also, we recall that $Z^{(q)}= E_+^{\sym,q}$ and $W=k_0^+(\cdot-1)$ in the notation of Part I (see \cite[Remark 2.15]{BKT20}).
 \end{remark}

\begin{corollary}[Fast-forwarding resolvents]\label{cor:resolvnew} Let $-Y$ be any recurrent spectrally negative process with paths of unbounded variation and no diffusion component.
\begin{enumerate}[(i)]
    \item Let  and   $\tau_0 = \inf\{t> 0: \Nr{a}(-Y)_{t}\not\in (0,a] \}$. Then, for any $q\ge 0$, the $q$-resolvent measure on $(0,a]$ at $x\in(0,a]$ of $\Nr{a}(-Y)$ killed at $\tau_0$  is
\[U_{q,x}^{{\rm DN}}(\dd y)=\left(\frac{W^{(q)}(x)}{Z^{(q)}(a)} Z^{(q)}(a-y)  -W^{(q)}(x-y)\right)\dd y,\]
and the corresponding exit problem allows the solution \[
 \mathbb E_x[e^{-q\tau_0}]=Z^{(q)}  (x)-\frac{q \int_0^aZ^{(q)}(z)\,\dd z }{Z^{(q)}(a)}  W^{(q)}(x)   .
 \]
\item For any   $q> 0$, the $q$-resolvent measure on $[0,a]$ of $\Nl{0}(\Nr{a}(-Y))$  at $x\in[0,a]$ is
\[U_{q,x}^{{\rm NN}}(\dd y)=\left(\frac{Z^{(q)}(x)}{q\int_0^a Z^{(q)}(z)\,\dd z}Z^{(q)}(a-y) -W^{(q)}(x-y)\right)\dd y.\]

\end{enumerate}
\begin{proof} Recall that \ref{H0} characterises $Y$, and so the same strategy of Theorem \ref{thm:main_II} (cf. resolvent for $\mathcal D(\Cr,{\rm ND})$ in   \cite[Table 4]{BKT20}) 
proves that for any $g\in C_0(0,a]$
\begin{equation}\label{eq:DNresolv}
 \mathbb E_x\left[\int_0^{\tau_0} e^{-qt}g\big(\Nr{a}(-Y)_t\big)\,\dd t\right]=Z^{(q)}\left[\frac{IZ^{(q)}[g](a)}{1+qZ^{(q)}[IW](a)}W- W\star g\right] (x),    
\end{equation}
and by the identities in Remark \ref{rmk:scalefn} the   identity for $U_{q,x}^{{\rm DN}}$ is proved. The same proof holds for $U_{q,x}^{{\rm NN}}$ but using the resolvent for  $\mathcal D(\Cr,{\rm NN})$ in   \cite[Table 4]{BKT20} (observing that in this table, for the coefficient $d$,   $2=I_{\pm}(\pm1)$). For the exit problem, note that for any $g\in C_0(0,a]$ and $q>0$, \eqref{eq:DNresolv} holds. Then, by Monotone Convergence Theorem applied to $0\le g_n\uparrow 1 $ everywhere on $(0,a]$,
\begin{align*}
\frac1q- \frac{\mathbb E_x\left[ e^{-q\tau_0} \right]}q& = \frac{IZ^{(q)}[ 1](a)}{1+qZ^{(q)}[IW](a)} Z^{(q)}[W](x)- Z^{(q)}[ W\star  1] (x) \\
& = \frac{IZ^{(q)} (a)}{Z^{(q)}(a)} W^{(q)}(x)+\frac1q- \frac{Z^{(q)}  (x)}q.
\end{align*} 
\end{proof} 
\end{corollary}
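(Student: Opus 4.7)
The plan is to apply Theorem \ref{thm:main_II} after transferring the setup from $Y$ on $[-1,1]$ to $-Y$ on $[0,a]$ by affine reflection and scaling, which preserves the class of processes characterised by \ref{H0}. Part (i) will then use the backward generator $(\Cr,\mathrm{ND})$, whose domain and $q$-resolvent on $C_0(\Omega)$ are recorded in \cite[Table 4]{BKT20} in terms of the operator scale function $Z^{(q)}$ (which equals $E_+^{\psi,q}$ in the notation of Part I and, by Remark \ref{rmk:scalefn}, coincides with the standard spectrally negative scale function), and part (ii) will use $(\Cr,\mathrm{NN})$ from the same table.

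First I would write the Feller $q$-resolvent of $\Y{ND}$ evaluated at $g\in C_0(0,a]$ (extended by zero outside $[0,a]$) directly from the ND line of \cite[Table 4]{BKT20}, obtaining
$$\mathbb{E}_x\!\left[\int_0^{\tau_0}\!e^{-qt}g\bigl(\Nr{a}(-Y)_t\bigr)\,dt\right]=Z^{(q)}\!\left[\frac{IZ^{(q)}[g](a)}{1+qZ^{(q)}[IW](a)}\,W-W\star g\right]\!(x).$$
The goal is then to rewrite this in density form. The identities in Remark \ref{rmk:scalefn} do exactly the necessary bookkeeping: $Z^{(q)}[W]=W^{(q)}$ turns the first term into a multiple of $W^{(q)}(x)$; the series identity $Z^{(q)}[W\star g]=W^{(q)}\star g$ (immediate from the definition of $Z^{(q)}$) turns the second term into $W^{(q)}\star g(x)$; the identity $1+qZ^{(q)}[IW](a)=Z^{(q)}(a)$ clears the denominator; and $IZ^{(q)}[g](a)=(g\star Z^{(q)})(a)=\int_0^a Z^{(q)}(a-y)\,g(y)\,dy$ exposes the kernel $W^{(q)}(x)Z^{(q)}(a-y)/Z^{(q)}(a)-W^{(q)}(x-y)$ claimed for $U_{q,x}^{\mathrm{DN}}$. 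The same procedure applied to the NN line of \cite[Table 4]{BKT20}, using the normalising constant $2=I_{\pm}(\pm 1)$ arising there, produces the formula for $U_{q,x}^{\mathrm{NN}}$ in part (ii); the replacement $W^{(q)}(x)\rightsquigarrow Z^{(q)}(x)$ in the numerator is what tracks the change in left boundary condition from killing to fast-forwarding.

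For the exit identity, I will apply Monotone Convergence to a sequence $0\le g_n\uparrow 1$ on $(0,a]$. The left-hand side of the resolvent identity becomes $\mathbb{E}_x[(1-e^{-q\tau_0})/q]$, while the right-hand side, taken in its already simplified density form, becomes $\int_0^a [W^{(q)}(x)Z^{(q)}(a-y)/Z^{(q)}(a)-W^{(q)}(x-y)]\,dy$. Rearranging and using $(Z^{(q)})'=qW^{(q)}$, so that $q\int_0^x W^{(q)}(z)\,dz=Z^{(q)}(x)-1$, then gives the stated closed form for $\mathbb{E}_x[e^{-q\tau_0}]$.

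The main obstacle I anticipate is not analytic but organisational: first, correctly matching sign conventions, boundary labels and the swap of left/right when translating the $[-1,1]$ results of Part I for $Y$ into the $[0,a]$ setup for $-Y$; and second, ensuring that MCT is applied \emph{after} the four scale-function identities have been used to rewrite the resolvent as a density integral, because otherwise one is forced to evaluate $Z^{(q)}[\,\cdot\,]$ at the constant function $1\notin C_0(0,a]$, where the cited resolvent formula from Part I is not directly available.
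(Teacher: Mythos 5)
Your proposal tracks the paper's own proof almost line for line: both start from the same resolvent formula (the paper's \eqref{eq:DNresolv}) imported from the $\mathcal D(\Cr,\mathrm{ND})$ (resp.\ $\mathcal D(\Cr,\mathrm{NN})$) entry of \cite[Table 4]{BKT20} via the pathwise identification of Theorem \ref{thm:main_II}, then unwind it using the identities of Remark \ref{rmk:scalefn}, and finally pass to $g_n\uparrow 1$ by Monotone Convergence to get the exit formula. The only genuine difference is one of emphasis: you are more explicit about the affine transfer from $Y$ on $[-1,1]$ to $-Y$ on $[0,a]$, and you prefer to apply MCT to the kernel (density) form of the resolvent, whereas the paper applies MCT directly to the $Z^{(q)}[\,\cdot\,]$ expressions (which is equally valid because all the kernels are nonnegative, so the convergence passes through the series defining $Z^{(q)}$ term by term); these are presentational choices, not different arguments.
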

 
 \begin{example}
In the stable/fractional case $\phi(\dd y)=y^{-1-\alpha}/\Gamma(-\alpha)$, $\alpha\in(1,2)$, we have $W(x)=x^{\alpha-1}/\Gamma(\alpha)$ and $Z^{(q)}(x)= E_{\alpha,1}(q x^\alpha )$, where  $E_{\gamma,\beta}(x)=\sum_{n=0}^\infty  x^{ n}/\Gamma(\gamma n+\beta)$ is the Mittag-Leffler function for two positive parameters $\gamma$ and $\beta$ \cite[Exercises 8.2.ii and 8.2.iii]{MR2250061}. Then Corollary  \ref{cor:resolvnew}-(i) implies that for all $x\in[0,a]$,
 \[
 \mathbb E_x\big[e^{-q\tau_0^{{\rm DN}}}\big]=E_{\alpha,1}(q x^\alpha)  -\frac{q\int_0^aE_{\alpha,1}(q z^\alpha)\,\dd z}{E_{\alpha,1}(q a^\alpha)} x^{\alpha-1} E_{\alpha,\alpha}(qx^\alpha),
 \]
 and 
 \[ \mathbb E_x\big[\tau_0^{{\rm DN}}\big]= aW(x)-\int_0^xW(z)\,\dd z=a\frac{x^{\alpha-1}}{\Gamma(\alpha)}- \frac{x^{\alpha}}{\Gamma(\alpha+1)}
 ,\] where $\tau_0^{{\rm DN}} =\Tau{\Nr{a}(-Y)}{(0,a]}$. Instead the average time spent in $(0,a]$ by $\Nar{a}(-Y)$ before exiting $(0,a]$ is, by \cite[Theorem 8.10.i]{MR2250061}, of the form
  \[ \mathbb E_x\big[\tau_0^{{\rm DN^*}}\big]= \frac{W(a)}{W'(a)}W(x)-\int_0^xW(z)\,\dd z=\frac{a}{\alpha-1}\frac{x^{\alpha-1}}{\Gamma(\alpha)}- \frac{x^{\alpha}}{\Gamma(\alpha+1)}
 ,\]
 where $\tau_0^{{\rm DN^*}} =\Tau{\Nar{a}(-Y)}{(0,a]}$.
 Note that for each $x\in (0,a]$, 
 \[
 \mathbb E_x\big[\tau_0^{{\rm DN^*}}\big]\to \infty\quad \text{meanwhile}\quad  \mathbb E_x\big[\tau_0^{{\rm DN}}\big]\to a-x\quad \text{as }\alpha\downarrow1,\]
 suggesting that, in applications, modelling fast-forwarding boundary conditions (particles free to move in and out of the domain) incorrectly with  Neumann/reflecting boundary conditions can lead to significant prediction errors. For completeness, let us note that by \cite[Theorem 8.10.ii]{MR2250061},
   \[ \mathbb E_x\big[\tau_a^{{\rm ND}}\big]= \int_0^aW(z)\,\dd z-\int_0^xW(z)\,\dd z=\frac{a^{\alpha}}{\Gamma(\alpha+1)}- \frac{x^{\alpha}}{\Gamma(\alpha+1)}
 ,\]
where $\tau_a^{{\rm N^*D}} =\Tau{\Nal{0}(-Y)}{[0,a)}$ equals $\Tau{\Nl{0}(-Y)}{[0,a)}$ in law.
\end{example}

\section*{Acknowledgments} We want to mention that an important role for this work was played by a significant amount of correspondence with many professors. Therefore we thank  Florin Avram,  Martin Barlow, Jean Bertoin, Krzysztof Burdzy, Bruce Henry, Adam Jakubowski, Arturo Kohatsu-Higa, Vassili Kolokoltsov, \L ukasz Kruk, Andreas Kyprianou, James Pitman, Ren\'e  Schilling,    Steven  Shreve,  Jason Swanson and Ger\'onimo Uribe Bravo.
\appendix
\section*{Appendix}

\section{Additional proofs}\label{app:additional}
\begin{proof}[of Proposition \ref{prop:ff_meas}]
Recall that the cylinder sets $\mathcal A=\{\pi_t^{-1}(B_r(x)): t,r>0, x\in\mathbb R\}$ generate the $\sigma$-algebra of $ D([0,\infty),\mathbb R)$. Let $A\in \mathcal A$.  If $B_r(x)\cap (a,\infty)=\emptyset$ then $\Nl{a}^{-1}(A)=\emptyset$. Otherwise $B_r(x)\cap (a,\infty)\not=\emptyset$ and we consider two sub-cases, namely $a\not\in B_r(x)$ and $a\in B_r(x)$. For the first case  we claim that $\Nl{a}^{-1}( A)$ equals the intersection with $\setinftime{a}{\infty}$ of
 $$
\bigcup_{\substack{r\in \mathbb Q^+,\,\tilde x\in \mathbb Q:\\  B_{\tilde r}(\tilde x )\subsetneq B_r(x)}}  \bigcap _{n\in\mathbb N} \left(\bigcup_{t\le q\in \mathbb Q} \pi_q^{-1}(B_{\tilde r}(\tilde x ))\cap \Big\{f:  \Leb(s\in [0,q]:\dpath (s)> a)\in (t,t+1/n)\Big\}\right),
 $$
 which is a measurable set in $D_{a,\infty}([0,\infty),\mathbb R)$ by \eqref{eq:timeintunnel_measurable}.
 To prove this first observe that for a function $\dpath\in D_{a,\infty}([0,\infty),\mathbb R)$ and any $t>0$
 \begin{align*}
 &    \Nl{a}(\dpath )(t)\in B_r(x)\\
 \iff &\,\exists t'\ge t:\dpath  (t')\in B_r(x) \,\, \& \,\, \lambda(s\in[0,t']:\dpath(s) > a )=t \\
  \iff&\, \exists B_{\tilde r}(\tilde x)\subsetneq B_{ r}( x),\, \tilde r\in \mathbb Q^+,\,\tilde x\in \mathbb Q: \\
  &  \forall\, n\in\mathbb N\,\, \exists\, q\ge t,\,q\in \mathbb Q^+:\,\dpath  ( q)\in B_{\tilde r}(\tilde x)\,\,\&\,\, \lambda(s\in[0,q]:\dpath(s) > a )\in (t,t+1/n).
\end{align*} 
The first `$\iff$' is clear from the definition of $ \Nl{a}$ and $a\notin B_r(x)$. To prove the second `$\Rightarrow$'  observe that by right continuity there exists $B_{\tilde r}(\tilde x)$ and a sequence of rationals $q_n\downarrow t'$ such that $\dpath (q_n)\in B_{\tilde r}(\tilde x)$ for all $n$ and then it has to hold that $\lambda(s\in[0,q_n]:\dpath \ge a )\downarrow t$. To prove the second second `$\Leftarrow$' observe that we can select a sequence $q_n\ge t$ such that $\dpath (q_n)\in B_{\tilde r}(\tilde x)$ for all $n$ and 
$$
t< \lambda(s\in[0,q_n]:\dpath  (s)>a )< t+1/n.
$$
As $\dpath  \in \setinftime{a}{\infty}$ and $q_n\ge t$, we known $\{q_n:n\in\mathbb N\}$ is contained in some compact interval $[t,b]$, so that  there exists a $t'\ge t$ such that $q_{n_j}\to \tilde t$ for some subsequence. If there exists a $q_{n_m}< t'$,   then we get a contraddiction because
$$
t< \lambda(s\in[0,q_{n_m}]:\dpath (s)> a )\le \lim_{j\to\infty}  \lambda(s\in[0,q_{n_j}]:\dpath(s) > a ) =t.
$$
Then we can choose a further subsequence $q_{m}\downarrow  t'$ and by right continuity $\dpath (t')\in \overline{B_{\tilde r}}(\tilde x)\subset B_{ r}(  x)$, and we are done.\\
 It remains to treat the case $a\in B_r(x)$.   In this case we can select $\tilde r,\tilde x$ so that $\Nl{a}(\dpath ) \in A$ if and only if either  $\Nl{a}(\dpath )(t)\in B_{\tilde r}(\tilde x)\subset (a,\infty)$ or $\Nl{a}(\dpath )(t)=a$. The set of functions $\dpath$ that satisfy the first condition are measurable by the same argument as above. We can now conclude if we prove that for each $t>0$ the set  $ \Nl{a}^{-1}( \pi_t^{-1}(\{a\}))$ equals the intersection of $\setinftime{a}{\infty}$ with
 $$
 \bigcap_{n,m\in\mathbb 
 N}    \left(\bigcup_{t\le q\in \mathbb Q} \pi_q^{-1}(B_{1/m}(a))\cap \Big\{f:  \Leb(s\in [0,q]:\dpath (s)> a)\in\left(t,t+1/n\right)\Big\}\right).
 $$
So we prove that
 \begin{align*}
 &\,\Nl{a}(\dpath )(t)=a\\
  \iff &\, \exists \, t'\ge t:\dpath  (t')=a \, \,\&\, \, \lambda(s\in[0,t']:\dpath (s)> a )=t \\
  &\,\,\&\,\, \forall\, n\in\mathbb N\, \, \Leb(s\in[t',t'+1/n]:\dpath(s)> a )>0 \\
  \iff &\,\forall\, n,m\in\mathbb N\,\, \exists\, q\in \mathbb Q^+:\dpath  ( q)\in B_{1/m}(a)\,\,\&\,\, \lambda(s\in[0, q]:\dpath (s)>a )\in(t,t+1/n) .
\end{align*} 
The first `$\iff$' is immediate due to
\begin{align*}
&\,\inf\left\{s:\int_0^s\mathbf 1_{\{\dpath (z)> a\}}\dd z>t\right\}=t' \\
\iff &\, \int_0^{t'}\mathbf 1_{\{\dpath (z)> a\}}\dd z=t\quad\&\quad \forall\, n\in\mathbb N\,\,\, \int_{t'}^{t'+1/n}\mathbf 1_{\{\dpath (z)> a\}}\dd z>0.
\end{align*} 
 For the second `$\Rightarrow$', by right continuity of $\dpath $ we can take a sequence of rationals $q_n\downarrow t'$ such that $\dpath  ( q_m)\in B_{1/m}(a)$, then, because $\lim_{m\to\infty}\lambda(s\in[0, q_m]:\dpath (s)> a )=t$, we can find $n_m\ge m$ such that 
 \[
 t=\lambda(s\in[0, t']:\dpath (s)>a )<\lambda(s\in[0, q_{n_m}]:\dpath(s)> a )< t+1/n.
 \]
 For the second `$\Leftarrow$', select  a sequence $\{q_{n}:n\in\mathbb N\}$ such that $\dpath  ( q_n)\in B_{1/n}(a)$ and $\lambda(s\in[0, q_n]:\dpath (s)> a )\in(t,t+n^{-1})$. Then, again by recurrence of $\dpath $, the $q_n$'s live in a compact set so take a subsequence $\{q_{n_j}\}$ converging to some $t'$. Note that again the existence of one $q_{n_i}<t'$ leads to a contradiction because 
 $$
 t<\Leb(s\in[0, q_{n_{i}}]:\dpath  (s)> a ) \le  \lim_{j\to\infty}\Leb(s\in[0, q_{n_j}]:\dpath  (s)> a )=t,
 $$
thus we can select a further subsequence $q_{i}\downarrow t'$ so that,  as $i\to\infty$, $\dpath (q_{i})$ converges to $a$ but also to $\dpath (t')$ by right continuity, and clearly  for any $\epsilon>0$  there exists a $q_{i}$ $\epsilon$-close to $t'$ implying
$$\Leb(s\in[t', t'+\epsilon]:\dpath  (s)> a )\ge \Leb(s\in[t', t'+q_{i}]:\dpath  (s)> a )  >0.$$

 \end{proof}
 
\begin{proof}[of Proposition \ref{prop:FF_repr_ab}]
The two maps are clearly well-defined and measurability is immediate  as they are the composition of measurable maps by Proposition \ref{prop:ff_meas}.   Now observe that for any $x\in (a,b)$  there exists $t>0$  with 
\[
\Nr{b}(\Nl{a}(\dpath ))(t)=f\left((\AF^\dpath_a)^{-1}\left(\left(\AF^{f\left((\AF^\dpath_a)^{-1}\right)}_b\right)^{-1}(t)\right) \right)=x
\]
if and only if there exist $t,t'\ge 0$ such that $\dpath (t')=x$ and
\begin{equation}
    \label{eq:lastcondff} (\AF^\dpath_a)^{-1}\left(\left(\AF^{f\left((\AF^\dpath_a)^{-1}\right)}_b\right)^{-1}(t)\right)=t',
\end{equation} where we simplified notation by writing $(\AF^\dpath_a)^{-1}$ for the right inverse of $s\mapsto \AF^\dpath_a(s):=\int_0^{s}\indi{f(z)>a}\,\dd z$ and $\big(\AF^{f\left((\AF^\dpath_a)^{-1}\right)}_b\big)^{-1}$ for the right inverse of $s\mapsto\int_0^{s}\indi{f((\AF^\dpath_a)^{-1}(z))<b}\,\dd z$. 
Equation \eqref{eq:lastcondff} holds if and only if there exist  $t,t'\ge 0$ such that $\dpath (t')=x$ and
\[
\left(\AF^{f\left((\AF^\dpath_a)^{-1}\right)}_b\right)^{-1}(t)=\AF^\dpath_a(t')  \quad\text{and}\quad \int_{t'}^{t'+1/n} \mathbf1_{\{\dpath (z)> a\} }\dd z>0 \,\,\,\, \forall n\in\mathbb N,
\]
which in turn  holds if and only if there exist $t,t'\ge 0$ such that $\dpath (t')=x$ and
\begin{equation}\label{eq:equff}
t=\int_0^{\AF^\dpath_a(t')}\indi{f((\AF^\dpath_a)^{-1}(z))<b}\,\dd z  \quad\text{and}\quad \int_{\AF^\dpath_a(t')}^{\AF^\dpath_a(t')+1/n} \indi{f((\AF^\dpath_a)^{-1}(z))<b}\dd z>0\,\,\,\, \forall n\in\mathbb N,
\end{equation}
where for the inequalities we used that $t'$ is a point of increase for $\AF^\dpath_a$. 
The first identity in \eqref{eq:equff} equals $t=\int_0^{t'}\indi{a<f(z)<b}\,\dd z $, meanwhile the inequality can be rewritten for all large $n$ as
\begin{align*}
0&< \int_{\AF^\dpath_a(t')}^{\AF^\dpath_a(t')+1/n} \indi{f((\AF^\dpath_a)^{-1}(z))<b}\dd z\\
&=\int_{t'}^{t'+1/n} \indi{f((\AF^\dpath_a)^{-1}(\AF^\dpath_a(t')+(z-t')))<b}\dd z\\
&=\int_{t'}^{t'+1/n} \indi{a<f(z)<b}\dd z,
\end{align*}
because for all small $\epsilon>0$ we have $f(z)>a$ for all $z\in[t', t'+\epsilon)$ which implies $(\AF^\dpath_a)^{-1}(\AF^\dpath_a(t')+\epsilon))=(\AF^\dpath_a)^{-1}(\AF^\dpath_a(t'+\epsilon)))=t'+\epsilon$. And so we proved that on $(a,b)$
\[
\Nr{b}(\Nl{a}(\dpath )) = f\left((\AF^\dpath_{(a,b)} )^{-1}\right).
\]
Clearly the same argument above proves that $\Nl{a}(\Nr{b}(\dpath))=\dpath((\AF^\dpath_{(a,b)} )^{-1})$ on $(a,b)$. Finally, to show the paths agree at times when they equal $a$ it is now enough to observe that $\Nr{b}(\Nl{a}(\dpath ))(t)=a $ implies that there exists a sequence of decreasing times $t_n\downarrow t$ such that $\Nr{b}(\Nl{a}(\dpath ))(t_n)=f((\AF^\dpath_{(a,b)} )^{-1})(t_n)>a$ and we can conclude by right continuity of the fast-forwarded paths. The exact same argument holds for $b$ and we are done.
 
\end{proof}

  \begin{proof}[Measurability of \eqref{eq:timeintunnel_measurable}]
We first prove that $(\dpath,t)\mapsto \pi_t(f)$ is $\sigma(\mathcal D_{J_1}\times \mathcal B(\R^+))\backslash \mathcal B(\mathbb R)$ measurable. To do we first show that for any $\epsilon>0$   the function
\[
(\dpath,t)\mapsto h_\epsilon(\dpath,t)=\frac1\epsilon \int_t^{t+\epsilon} \dpath(s)\,\dd s,
\]
is continuous. Let $(\dpath_n,t_n)\to (\dpath,t)$. Then 
\begin{align*}
    \epsilon|h_\epsilon(\dpath_n,t_n)- h_\epsilon(\dpath,t)| &=\left|\int_{t_n}^{t_n+\epsilon} \dpath_n(s)\,\dd s- \int_t^{t+\epsilon} \dpath(s)\,\dd s\right|\\
    &=\left|\int_{t_n}^{t_n+\epsilon} \dpath_n(s)\,\dd s-\int_{t}^{t+\epsilon} \dpath_n(s)\,\dd s  +\int_{t}^{t+\epsilon} \dpath_n(s)- \dpath(s)\,\dd s\right| \\
       &\le \sup_n\supnorm{\dpath_n}{[t-\epsilon,t+2\epsilon]}2|t_n-t| +\int_{t}^{t+\epsilon} \left| \dpath_n(s)- \dpath(s)\right|\,\dd s 
\end{align*}
which vanishes because convergence in $D([0,\infty),\mathbb R)$ imples convergence almost everywhere and the sequence must be uniformly bounded. By the right continuity of $\dpath$ $h_\epsilon(\dpath,t)\to \pi_t(\dpath)$ as $\epsilon\to 0$, and thus $(\dpath,t)\mapsto \pi_t(\dpath)$ is measurable. Now we write
\[
\dpath\mapsto\Leb\big(t\in[0,\infty): \dpath(t)\in B\big)=\int_{0}^\infty \indi{ \pi_t(\dpath)\in B}\,\dd t = \int_{0}^\infty \varphi(f,t)\,\dd t.
\]
Then the above function is measurable  by standard results in measure theory \cite[Chapter 7]{Halmos50} as  $\varphi:D([0,\infty),\mathbb R)\times [0,\infty)\to \mathbb R$ is a non-negative bounded measurable function.
\end{proof}


\end{document}